\documentclass[a4paper,12pt, reqno]{amsart}
\usepackage{amssymb,amsmath,latexsym}
\usepackage{mathrsfs}
\usepackage[shortlabels]{enumitem}
\usepackage{color}
\usepackage{comment}
\usepackage{xcolor}
\usepackage{stackengine}
\usepackage{calc}
\usepackage{physics}
\usepackage{accents}
\oddsidemargin 0in
\evensidemargin 0in
\topmargin -0.2in
\textwidth 6.6in
\textheight 9.6in
\allowdisplaybreaks

\newcommand{\dbtilde}[1]{\accentset{\approx}{#1}}

\newcommand{\wt}{\widetilde}

\newcommand{\mb}{\mathbb}
\newcommand{\mc}{\mathcal}

\newtheorem{theorem}{Theorem}[section]
\newtheorem{definition}[theorem]{Definition}
\newtheorem{remark}[theorem]{Remark}
\newtheorem{lemma}[theorem]{Lemma}
\newtheorem{prop}[theorem]{Proposition}
\newtheorem{example}[theorem]{Example}
\newtheorem{cor}[theorem]{Corollary}
\title{stability property for the quantum jump operators of an open system}
\author{Marius Junge and Peixue Wu}
\begin{document}
\begin{abstract}
We show the continuity property of spectral gaps and complete Logarithmic constants in terms of the jump operators of Lindblad generators in finite dimensional setting. Our method is based on the bimodule structure of the derivation space and the technique developed in \cite{Paulsen09}. Using the same trick, we also show the continuity of the $g^2(0)$ constant used to distinguish quantum and classical lights in quantum optics.
\end{abstract}
\maketitle
\section{Introduction}
In quantum information theory, open quantum systems are described by Lindblad generators, the generators of a semigroup of completely positive unital (or trace preserving) maps.

In this paper we investigate the mathematical connection between the Lindblad generator $L: \mc B(\mc H)\to \mc B(\mc H)$ on finite dimensional space:
$$ L(x) = i[H,x]+\sum_j \big[ V_j^*xV_j -\frac{1}{2}(xV_j^*V_j+V_j^*V_jx)\big], V_j \in \mc B(\mc H), H = H^* \in \mc B(\mc H), $$
and its corresponding \emph{jump map} which is given by
 \[ \Psi_{L}(x) = \sum_j V_j^*xV_j.  \]
This map is certainly  completely positive, but not unital in general. The jump map contains a lot of information of the physical system. For example, in quantum optics, the number \emph{photon emission rate}
 \[ R(\rho) = \Tr(\Psi(1)\rho) \]
corresponds to the probability of detecting exactly one photon at time $t=0$ spontaneously. The operators $V_j$ are called \emph{jump operators}, so that
$R_j(\rho)=\Tr(a_j^*a_j\rho)$ is the probability of measuring a photon of type $j$.

We will show that the jump operators can be recovered from the Lindbladian itself. Then the problem of choosing the jump operators turns out to be equivalent to find the Kraus operators of a completely positive map. The main, standard tools in this analysis is the gradient form introduced in \cite{Lindblad76}:
 \[ 2\Gamma_{L}(x,y) = L(x^*y)-L(x^*)y-x^*L(y) \]
Note that the gradient form for the classical heat kernel in $\Gamma(f,g)=(\nabla f,\nabla g)$, called carr\'{e} du champs by P.A. Meyer \cite{Meyer}. For Lindblad operators, we find
 \[ \Gamma(x,y) = \sum_j [V_j,x]^*[V_j,y].\]
Note that $\delta_V(x)=[V,x]$ is the noncommutative version of a differential operator. This geometric point of view is well-accepted and facilitates introducing geometric tools to study the properties of quantum Markov semigroup, see \cite{CM17,CM20, GJL20,GJL21,GR22,LJL20,Wirth22, WirthZhang21} and references therein for more information.
Our main contribution is to build the analogy between Lindblad generators and jump maps in the following sense, which generalizes the work of Lindblad \cite{Lindblad76}
\[ \begin{array}{c|c}
  \text{Lindbladian}& \text{cp map} \\ \hline
  &  \\  \Gamma_{L}(\cdot, \cdot) =\sum_j \delta_{V_j} (\cdot)^*\delta_{V_j}(\cdot) & \Psi(\cdot) = \sum_j V_j^*(\cdot)V_j \\
  \Gamma_{L}\le_{cp} C \Gamma_{L'} & \Psi\le_{cp} C \Psi'\\
  \delta_{V_j}= \sum_k \alpha_{jk}\delta_{\wt V_k} &
  V_j= \sum_k \alpha_{jk}\wt V_k \\
  \end{array}  \]
The last two lines refer to different kind of comparisons. For a completely positive map a unitary in the environment does not change the map, whereas a linear transformation preserves the cp-order, i.e. $V_j=\sum_k \alpha_{jk}\wt V_k$ is equivalent to
 \[ (id_R \otimes \Psi)(x^*x)\le C (id_R\otimes \Psi')(x^*x) \]
for all matrices $x\in \mb M_d(\mb M_n),\dim R = d, \dim \mc H = n$. The same remains true in the category of derivation operators  on matrix algebras, provided we assume in addition that $V_j$ are matrices of trace $0$. Our main tool is based on the bimodule structure of the derivation space. See section 3-5 for details.

As applications, we show several stability properties based on the above diagram. For open systems with photon emissions, the quantity $g^{(2)}(0)$ reflects the probability of two photons being emitted almost simultaneously compared to the square probability of exactly one photon being emitter, and is given by the correlation function \cite{Optics2001}:
 \[ g^2(0) = g^2_L(0) = \frac{\Tr(\Psi_L\circ \Psi_L(I_n)\rho)}{\Tr(\Psi_L(I_n)\rho)^2}.\]
The notation $0$ in the quantity $g^2(0)$ comes from a limit procedure.  $g^2(0)>1$ implies superradiance, see \cite{Dicke54, AM22}. The interpretation is intuitive: $g^2(L)=1$ means that photon emission is stationary. $g^2(L)<1$ and $g^2(L)>1$ is sometimes referred to non-classical and classical light.

The following informal statement summerizes the main results discussed above:
\begin{theorem}(Informal) The map $L\to \Psi_{L}$ is injective and continuous. In particular, the photon emission rate $R$ and $g^{(2)}(0)$ depends continuously on $L$.
\end{theorem}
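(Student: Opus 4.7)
The plan is to recover $\Psi_L$ from $L$ via the gradient form $\Gamma_L$, transfer continuity through the bimodule dictionary displayed in the introduction, and then read off continuous dependence of $R$ and $g^{(2)}(0)$ as scalar functionals of $\Psi_L$.

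For injectivity, observe first that the assignment $L \mapsto \Gamma_L$ is linear, so $\Gamma_L$ is determined by $L$. The identity $\Gamma_L(x,y) = \sum_j [V_j,x]^*[V_j,y]$ exhibits $\Gamma_L$ as a completely positive sesquilinear form on $\mc B(\mc H)$ with Kraus operators the derivations $\delta_{V_j}$. Standard uniqueness of Kraus representations then says the tuple $\{\delta_{V_j}\}$ is determined by $\Gamma_L$ up to a unitary on the environment index $j$. Next, impose the gauge $\Tr V_j = 0$: since $[V,\cdot]=0$ forces $V$ to be scalar and the only traceless scalar is zero, the assignment $V \mapsto \delta_V$ becomes injective. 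Hence $\{V_j\}$ is determined by $L$ up to a unitary on $j$, which is precisely the ambiguity that the expression $\Psi_L(x) = \sum_j V_j^* x V_j$ is invariant under. Consequently $\Psi_L$ is a well-defined function of $L$.

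For continuity I would work entirely in finite dimensions and chase the vertical arrows of the dictionary. The map $L\mapsto \Gamma_L$ is continuous linear, so $\|L-L'\|\le \varepsilon$ yields approximate CP dominations $\Gamma_L \le_{cp} \Gamma_{L'} + O(\varepsilon)$ and vice versa. The Paulsen09 comparison principle, adapted to the bimodule structure of the derivation space and restricted to traceless jump operators, converts such CP approximations into operator factorizations: there exist matrix coefficients $\alpha_{jk}$ with $V_j = \sum_k \alpha_{jk}\wt V_k$ and $\|\alpha\|\le 1+O(\varepsilon)$, where $\wt V_k$ are the jump operators of $L'$ in the traceless gauge. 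The second horizontal line of the paper's diagram then transports this directly to $\Psi_L \le_{cp} (1+O(\varepsilon))\Psi_{L'}$, and symmetry gives continuous dependence of $\Psi_L$ on $L$ in the completely bounded norm.

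Once $\Psi_L$ is continuous in $L$, the photon emission rate $R(\rho)=\Tr(\Psi_L(I)\rho)$ is a continuous linear functional of $\Psi_L$, and
\[
g^2_L(0) \;=\; \frac{\Tr(\Psi_L\circ \Psi_L(I_n)\rho)}{\Tr(\Psi_L(I_n)\rho)^2}
\]
is a rational function of the entries of $\Psi_L$, continuous wherever its denominator is nonzero. The hard part will be precisely the quantitative lifting step: turning an approximate CP domination $\Gamma_L \le_{cp} (1+\varepsilon)\Gamma_{L'}$ into a genuine operator factorization of the $V_j$ with the correct norm control. This is the bimodule-valued refinement of Paulsen's factorization lemma that the rest of the paper is built to supply; the remaining work reduces to bookkeeping around that core lemma.
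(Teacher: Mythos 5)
Your overall architecture --- recover $\Psi_L$ from $\Gamma_L$, prove a quantitative comparison of gradient forms, transfer it to jump maps, and then treat $R$ and $g^{(2)}(0)$ as functionals of $\Psi_L$ --- is the paper's architecture, and your injectivity argument (uniqueness of the Kraus/factorization data of $\Gamma_L$ up to a unitary on the environment index, with the traceless gauge killing the scalar ambiguity of $V\mapsto\delta_V$) matches the paper's Theorem \ref{main:comparison} together with the remark after Corollary \ref{equivalent comparison}.

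The gap is in the continuity step, and it is not bookkeeping. You propose to feed an \emph{approximate} domination $\Gamma_L\le_{cp}\Gamma_{L'}+O(\varepsilon)$ into the factorization machinery and extract $V_j=\sum_k\alpha_{jk}\wt V_k$ with $\|\alpha\|\le 1+O(\varepsilon)$. But the factorization theorem (Theorem \ref{main:comparison}) requires an \emph{exact} domination $\Gamma_L\le C\,\Gamma_{L'}$, because its conclusion is the span containment $\mathrm{span}\{V_j\}\subseteq\mathrm{span}\{\wt V_k\}$; under a small perturbation the spans of the jump operators of $L$ and $L'$ can be incomparable (e.g. $L'=L+\varepsilon \mc L_W$ with $W$ orthogonal to all $V_j$), so no such $\alpha$ exists and the inequality $\Gamma_{L'}\le(1+O(\varepsilon))\Gamma_L$ that your ``vice versa'' asserts is simply false in general. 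The paper's actual mechanism is different: it realizes the gradient matrices $m_L$ as an ordered vector space with \emph{Archimedean order unit} $e=m_{-(I-E_{\tau})}$ (Example \ref{ex:1}), invokes \cite[Proposition 2.23]{Paulsen09} so that the order norm is a genuine norm, uses finite-dimensional norm equivalence to convert $\|L-L'\|_{2\to2}<\delta$ into the two-sided sandwich $m_L-c\delta e\le m_{L'}\le m_L+c\delta e$, and only then applies the exact comparison theorem together with Corollary \ref{drop} (dropping the part of $e$ orthogonal to $\mathrm{span}\{V_j\}$) to upgrade the lower bound to $(1-\varepsilon)m_L\le m_{L'}$. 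The resulting estimate is intrinsically asymmetric,
\begin{equation*}
(1-\varepsilon)\Gamma_L\;\le\;\Gamma_{L'}\;\le\;\Gamma_L+\varepsilon\,\Gamma_{-(I-E_{\tau})},
\end{equation*}
and correspondingly $(1-\varepsilon)\Psi_L\le\Psi_{L'}\le\Psi_L+\varepsilon E_{\tau}$; the upper perturbation is by the fixed reference generator, not by a multiple of $\Gamma_L$ itself. The $g^{(2)}$ estimate in Section 7 has to be run separately on numerator and denominator precisely because of this asymmetry (and needs the nondegeneracy hypothesis $g^{(2)}_L(0)>0$, which you correctly flag). So your proposal identifies the right destination but the ``core lemma'' you defer to cannot deliver the quantitative lifting; the Archimedean order-unit argument is the missing idea.
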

Another important application is the stability property of functional inequalities. In fact, our result implies a stability property of \emph{complete Logarithmic Sobolev inequality}(CMLSI), which is undergoing active research nowadays. We refer the reader to \cite{CM17, CM20, GJL20, GJL21, GR22, JLR19} for discussions on this topic. Our result is the following:
\begin{theorem}
Suppose $L: \mc B(\mc H) \to \mc B(\mc H)$ is a Lindblad generator with $\sigma-$detailed balance. Its fixed point algebra is $\mc N$. Then $\forall \varepsilon>0$, there exists $\delta = \delta(\varepsilon,L)> 0$, such that for any Lindblad generator $L'$ with $\sigma-$detailed balance which has fixed point algebra $\mc N$, we have
\begin{equation}
\|L-L'\|_{L_2 \to L_2} < \delta \implies
(1-\varepsilon)\text{CMLSI}(L)\le \text{CMLSI}(L').
\end{equation}
\end{theorem}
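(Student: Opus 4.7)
The strategy is to reduce CMLSI stability under $L_2$--perturbation of the Lindbladian to a one-sided cp--comparison of the gradient forms $\Gamma_L$ and $\Gamma_{L'}$, and then to invoke the standard comparison principle relating such a bound to CMLSI constants. First, I would apply the informal theorem stated earlier to conclude that $\|L - L'\|_{L_2 \to L_2} < \delta$ implies $\|\Psi_L - \Psi_{L'}\|_{\rm cb} \le C_L\,\delta$ for a constant $C_L$ depending only on $L$. Then, invoking the bimodule structure developed in Sections 3--5 together with the perturbation technique of \cite{Paulsen09}, I would select trace--zero Kraus decompositions $\Psi_L(\cdot) = \sum_j V_j^*(\cdot)V_j$ and $\Psi_{L'}(\cdot) = \sum_j \wt V_j^*(\cdot)\wt V_j$, padded to a common length, satisfying $\max_j \|V_j - \wt V_j\| \le \eta(\delta)$ with $\eta(\delta) \to 0$ as $\delta \to 0$. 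The trace--zero normalization is legitimate since constant shifts in the $V_j$ can be absorbed into the Hamiltonian part without affecting the $\sigma$--detailed balance structure or the fixed point algebra.

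Since $\delta_V(x) = [V,x]$ depends linearly on $V$, closeness of the jump operators yields $\|\delta_{V_j} - \delta_{\wt V_j}\|_{\rm cb} \le 2\|V_j - \wt V_j\|$, and expanding the difference
\[
 \Gamma_{L'}(x,x) - \Gamma_L(x,x) \;=\; \sum_j \bigl(\delta_{\wt V_j}(x)^*\delta_{\wt V_j}(x) - \delta_{V_j}(x)^*\delta_{V_j}(x)\bigr)
\]
gives a completely positive bound of the form $\Gamma_{L'} \ge \Gamma_L - \eta(\delta)\,K_L$, where $K_L$ is an auxiliary bilinear form controlled by $\|L\|$. On the $L_2$--orthogonal complement of the common fixed point algebra $\mc N$, the form $\Gamma_L$ is strictly positive with detailed--balance Poincar\'e constant $\lambda(L) > 0$; choosing $\delta$ small enough that $\eta(\delta)\,\|K_L\| \le \varepsilon\,\lambda(L)$ then produces the cp--domination $\Gamma_{L'} \ge (1-\varepsilon)\Gamma_L$ on $\mc N^\perp$.

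To conclude, I would invoke the standard comparison principle for CMLSI (see \cite{CM20, GJL20}): for two $\sigma$--detailed balance generators with a common fixed point algebra, $\Gamma_{L'} \ge_{cp} (1-\varepsilon)\Gamma_L$ forces $\mathrm{CMLSI}(L') \ge (1-\varepsilon)\,\mathrm{CMLSI}(L)$, yielding the claim with $\delta$ explicit in $\varepsilon$, $\lambda(L)$, and $\dim \mc H$. The main obstacle is the Paulsen step: the cb--norm bound on $\Psi_L - \Psi_{L'}$ does not automatically yield Kraus representations close element by element, since Kraus operators are only defined up to a unitary in the ancilla and the perturbation may mix the jump operators arbitrarily. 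The bimodule equivalence established in Sections 3--5, together with the common fixed point algebra assumption (which rigidifies the subspace on which the derivations act non--trivially), is precisely what rules out such ``rotational'' perturbations of the $V_j$ and supplies the required elementwise closeness.
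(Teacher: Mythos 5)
Your overall architecture (small $\|L-L'\|_{2\to2}$ $\Rightarrow$ $(1-\varepsilon)\Gamma_L\le\Gamma_{L'}$ $\Rightarrow$ CMLSI comparison) matches the paper's, but both load-bearing steps are left to arguments that do not work as stated. First, your route to $(1-\varepsilon)\Gamma_L\le\Gamma_{L'}$ via elementwise-close Kraus operators and a ``Poincar\'e constant on $\mc N^\perp$'' conflates the scalar spectral gap with the completely positive order. Writing $\Gamma_{L'}\ge\Gamma_L-\eta(\delta)K_L$ only helps if the error form $K_L$ is dominated by $\Gamma_L$ \emph{in the cp-order}, and by Theorem \ref{main:comparison} that requires the jump operators appearing in $K_L$ (which involve the differences $\wt V_j-V_j$) to lie in $\mathrm{span}\{V_j\}$ --- smallness in operator norm does not give this. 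The paper instead works in the ordered vector space of gradient matrices with Archimedean order unit $e=m_{-(I-E_{\mc N,\tau})}$, whose jump operators span $\mc N'$ and hence dominate every generator with fixed point algebra $\mc N$; equivalence of norms in finite dimensions gives $m_{L'}\ge m_L-c\delta e$, and Corollary \ref{drop} discards the part of $e$ orthogonal to $L$'s jump operators. That is Proposition \ref{stability of gradient form: symmetric}, and it cannot be replaced by elementwise perturbation of Kraus decompositions (which need not be elementwise close when the Choi spectrum degenerates).

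Second, and more seriously, the step ``$\Gamma_{L'}\ge_{cp}(1-\varepsilon)\Gamma_L$ with common fixed point algebra forces $\mathrm{CMLSI}(L')\ge(1-\varepsilon)\mathrm{CMLSI}(L)$'' is not a citable standard fact; it is the main content of the paper's proof. The entropy production $\mathrm{EP}_L(\rho)$ is not a function of the gradient form alone: each derivation $\partial_j$ is weighted by the modular datum $\omega_j$ through the kernel $[\rho]_{\omega_j}^{-1}$. The paper's argument partitions the index set by eigenvalue of $\Delta_\sigma$, shows that the intertwining matrix $A$ relating $\{V_j\}$ to $\{\wt V_j\}$ is block diagonal, $A=\mathrm{diag}(A_1,\dots,A_l)$, with respect to this partition (this uses $\sigma$-detailed balance of \emph{both} generators), and compares the entropy production block by block with the same kernel $[\rho]_{\omega_k}^{-1}$ and the bound $\|A_k^*A_k\|\le\|A^*A\|\le\frac{1}{1-\varepsilon}$. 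The remark following the theorem explicitly contrasts this with the comparison result of \cite{JLR19}, which requires both generators to share the same jump operators --- precisely the situation a generic perturbation destroys. Without the block-diagonal step your argument does not close.
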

\noindent Similar argument can be applied to get stability property of \emph{spectral gap}.

Notational conventions:
\begin{itemize}
\item $\text{Tr}$ is the usual trace of matrix. $\tau: = \frac{1}{n}\text{Tr}$ is the normalized trace.
\item $\mc B(\mc H)$ is the set of bounded operators on the Hilbert space $\mc H$. 
\item The capital letter $I$ is denoted as identity superoperator: $I: \mc B(\mc H)\to \mc B(\mc H)$. $I_n$ is denoted as identity operators, the subscript $n$ denotes the dimension of the underlying Hilbert space.
\end{itemize}

\section{Preliminaries}
\subsection{Quantum Markov semigroup and gradient form}

First we recall the general definition of quantum Markov semigroup, which is a family of completely positive maps with certain conditions.
\begin{definition}
Suppose $\mathcal{M} \subseteq \mathcal{B}(\mathcal{H})$ is a subalgebra. We say that $T_t: \mathcal{M} \rightarrow \mathcal{M}$ is a (unital) quantum Markov semigroup if
\begin{itemize}
\item $\forall s,t \ge 0$, $T_sT_t = T_{s+t}, T_0 = I$. 
\item $\forall t \ge 0$, $T_t$ is normal, unital, completely positive.
\item $\forall x \in \mathcal{M}$, $T_tx \rightarrow x$ in ultra-weak topology as $t$ tends to 0. 
\end{itemize}
\end{definition}
In this paper, we will assume \textbf{$$\mathcal{M} = \mathcal{B}(\mc H), $$} where $\mathcal{H}$ is a Hilbert space of complex dimension $n\ge 2$. In other words, $\mathcal{M} = \mb M_n(\mathbb{C})$ for some $n\ge 2$. This will exclude some examples but some of our results are still true in more general settings.
Denote 
\begin{itemize}
\item $\mathcal{I}_n = \{(i,j)| 1\le i,j \le n\}$
\end{itemize} 
The Hilbert-Schimidt inner product $\langle\cdot,\cdot \rangle_{\text{HS}}$ on $\mathcal{M}$ is defined as \begin{equation}
\langle x,y \rangle_{\text{HS}} : = \tau(x^*y)
\end{equation}
Then $\mathcal{M}$ is a Hilbert space with respect to Hilbert-Schimidt inner product, denoted as $L^2(\mathcal{M},\tau)$. Since $\mathcal{M}$ is finite dimensional, we actually have norm continuity: 
\begin{equation}\label{norm continuous}
\lim_{t \rightarrow 0}||T_t -I|| = 0.
\end{equation}
Under \eqref{norm continuous}, we have a bounded normal linear operator $L: \mathcal{M} \rightarrow \mathcal{M}$, which is called the generator of quantum Markov semigroup, such that $T_t = e^{tL}$. It is easy to see that $L$ is *-preserving: $L(x^*) = L(x)^*,\forall x \in \mathcal{M}$ and $L(I) = 0$. The following theorem characterizes the generator of quantum Markov semigroup satisfying \eqref{norm continuous}, which is a combination of the main theorems of \cite{Alicki76, GKS75, Lindblad76}.

\begin{theorem} \label{Main theorem}
Suppose $L: \mathcal{M} \rightarrow \mathcal{M}$ is a bounded normal linear operator such that $L$ is *-preserving and $L(I) = 0$. The following are equivalent: 
\begin{enumerate}
\item $\{T_t = e^{tL}\}_{t \ge0}$ is a quantum Markov semigroup satisfying \eqref{norm continuous}.
\item $L$ has the form 
\begin{equation}\label{general form}
Lx = \sum_{i=1}^m \big[2V_i^*xV_i - (V_i^*V_ix + x V_i^*V_i)\big] + i[H,x]
\end{equation}
where $H,V_i \in \mathcal{M}$ and $H$ is self-adjoint and $m \ge 0$.
\item For any orthonormal basis $\{F_{\alpha}\}_{\alpha \in \mathcal{I}_n}$ of $L^2(\mathcal{M},\tau)$, where $\mathcal{I}_n = \{(i,j)| 1\le i,j \le n\}$, with $F_{(1,1)}= I_n$, $L$ can be written as  
\begin{equation} \label{GKS form}
Lx = i[H,x] + \sum_{\alpha,\beta \in \mathcal{I}_n \backslash \{(1,1)\}} c_{\alpha,\beta} \big[2F_{\alpha}^*xF_{\beta} - F_{\alpha}^*F_{\beta}x - xF_{\alpha}^*F_{\beta}\big]
\end{equation}
where the $(n^2-1) \times (n^2-1)$ matrix $(c_{\alpha,\beta})_{\alpha,\beta \in \mathcal{I}_n \backslash \{(1,1)\}}$ is a positive matrix.
\item The gradient form of the generator is completely positive, the definition of which is given below.
\end{enumerate}
\end{theorem}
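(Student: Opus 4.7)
The plan is to treat this as the classical Gorini–Kossakowski–Sudarshan–Lindblad theorem (the equivalence of (1), (2), (3)) together with an added gradient-form characterization (4). For the classical part I would prove the chain (3) $\Rightarrow$ (2) $\Rightarrow$ (1) $\Rightarrow$ (3). First, (3) $\Rightarrow$ (2): diagonalize the positive matrix $C = (c_{\alpha,\beta})_{\alpha,\beta\in\mc I_n\setminus\{(1,1)\}}$ by a unitary and set $V_i = \sqrt{\lambda_i}\sum_{\alpha\neq(1,1)} \overline{u_{i,\alpha}} F_\alpha$; substituting back identifies the sum in (3) with $\sum_i[2V_i^*xV_i - V_i^*V_ix - xV_i^*V_i]$, so (2) follows with the same Hamiltonian term. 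Next, (2) $\Rightarrow$ (1): rewrite $L(x) = \Phi(x) - Kx - xK^*$ with $\Phi(x) = 2\sum_i V_i^*xV_i$ completely positive and $K = \sum_i V_i^*V_i - iH$; by the Trotter product formula $e^{tL}$ is the limit of alternating compositions of completely positive maps, hence completely positive, and $L(I)=0$ gives unitality and the semigroup property. Finally, (1) $\Rightarrow$ (3): expand $L$ in the basis $\{F_\alpha^*(\cdot)F_\beta\}$ of linear maps on $\mc M$; $*$-preservation makes the coefficient matrix Hermitian and $L(I)=0$ isolates the commutator term $i[H,\cdot]$ and reduces the rest to indices in $\mc I_n\setminus\{(1,1)\}$. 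Conditional complete positivity of $L$ (equivalent to the complete positivity of $e^{tL}$ via the Choi–Jamiołkowski isomorphism, applied on the orthogonal complement of the maximally entangled vector) then forces the reduced matrix to be positive semidefinite.

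The genuinely new content is the equivalence with (4), which I would establish by proving (2) $\Leftrightarrow$ (4). The direction (2) $\Rightarrow$ (4) is a short computation: plugging the Lindblad form into the definition gives
\[
2\Gamma_L(x,y) = 2\sum_j [V_j,x]^*[V_j,y] = 2\sum_j \delta_{V_j}(x)^*\delta_{V_j}(y),
\]
and each summand $\delta_{V_j}(x)^*\delta_{V_j}(y)$ is a completely positive bilinear form of inner-product type; amplifying to $M_n(\mc M)$ by tensoring $V_j$ with $I_n$ shows the sum is completely positive at every matrix level. For the converse (4) $\Rightarrow$ (2) I would invoke a Stinespring–Kasparov–Paschke-type structure theorem for completely positive bilinear forms valued in $\mc M$: such a form on a matrix algebra decomposes as $\Gamma_L(x,y) = \sum_j \delta_j(x)^*\delta_j(y)$ for linear maps $\delta_j:\mc M \to \mc M$. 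Plugging this decomposition together with $L(I)=0$ and $*$-preservation into the defining identity $2\Gamma_L(x,y) = L(x^*y) - L(x^*)y - x^*L(y)$ produces the Leibniz relation $\delta_j(xy) = \delta_j(x)y + x\delta_j(y)$, which on a matrix algebra forces $\delta_j = \delta_{V_j}$ for some $V_j \in \mc M$. Substituting back identifies the symmetric part of $L$ with the dissipative Lindblad terms, and the residual skew-adjoint part (which has vanishing gradient form) is absorbed into $i[H,\cdot]$, yielding (2).

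The main obstacle is the (4) $\Rightarrow$ (2) direction: extracting honest inner derivations $\delta_{V_j}$, rather than merely abstract linear maps satisfying $\Gamma_L(x,y) = \sum_j \delta_j(x)^*\delta_j(y)$, relies on the bimodule structure of the derivation space advertised in the introduction and developed in later sections. The key algebraic input is that every derivation on $\mc B(\mc H)$ is inner; combined with the Stinespring-type decomposition, this is what allows us to read off the jump operators $V_j$ from the gradient form. The rest of the argument is standard and assembles into the classical GKLS statement augmented by the new geometric condition.
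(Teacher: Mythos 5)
The paper does not actually prove this theorem: its ``proof'' consists of two citations, attributing the equivalence of (1), (2), (4) to \cite{Lindblad76} and of (1), (3) to \cite{GKS75}. Your proposal therefore does strictly more work than the paper, and in outline it is a faithful reconstruction of the classical GKLS arguments: (3)$\Rightarrow$(2) by diagonalizing the coefficient matrix (this is exactly the ``standard form'' computation the paper performs later in Section 2), (2)$\Rightarrow$(1) by a Trotter product of completely positive factors $x\mapsto e^{-tK/k}xe^{-tK^*/k}$ and $e^{t\Phi/k}$, (1)$\Rightarrow$(3) via conditional complete positivity on the complement of the maximally entangled vector, and (2)$\Rightarrow$(4) via the identity $2\Gamma_L(x,y)=2\sum_j[V_j,x]^*[V_j,y]$, which is correct and is the computation the paper itself uses in Section 3.

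The one step stated too loosely to be literally correct is (4)$\Rightarrow$(2). Complete positivity of $\Gamma_L$ gives a factorization $m_L=D^*D$ and hence linear maps $\delta_j$ with $\Gamma_L(x,y)=\sum_j\delta_j(x)^*\delta_j(y)$, but for an arbitrary such factorization the individual $\delta_j$ need \emph{not} satisfy the Leibniz rule, and plugging the decomposition into $2\Gamma_L(x,y)=L(x^*y)-L(x^*)y-x^*L(y)$ does not produce it summand by summand. What that identity does yield is the cocycle relation $\Gamma_L(x,yz)=\Gamma_L(x,y)z+\Gamma_L(y^*x,z)-x^*\Gamma_L(y^*,z)$, which is precisely what is needed to define a left $\mc M$-action on the GNS module of $\Gamma_L$ by $x\cdot\delta(y):=\delta(xy)-\delta(x)y$, so that the \emph{total} map $\delta$ becomes a derivation into an $\mc M$-bimodule. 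Only then, using that every finite-dimensional unital $\mb M_n$-bimodule is a multiple of the standard one (because $\mb M_n\otimes\mb M_n^{\mathrm{op}}$ is simple) and that every derivation of $\mb M_n$ into it is inner, can one read off jump operators $V_j$ with $\delta_j=[V_j,\cdot]$; the residual map $L-\sum_j\mc L_{V_j}$ then has vanishing gradient form, hence is a $*$-preserving derivation killing $I$, hence equals $i[H,\cdot]$. You clearly have this picture in mind --- you invoke the bimodule structure and innerness of derivations, and the paper's Section 3 develops exactly this machinery for other purposes --- but the order of construction matters: the bimodule and the single module-valued derivation come first, and the individual inner derivations $\delta_{V_j}$ come last, not the other way around.
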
 
\begin{proof}
The equivalence of \textit{(1),(2),(4)} is given by \cite{Lindblad76}. The equivalence of \textit{(1),(3)} is given by \cite{GKS75}.
\end{proof}

Now we present another important definition: Gradient form of a Lindblad generator which is a non-commutative analogue of \textit{carr\'e du champ}:
\begin{definition}
For any normal bounded linear map $L: \mathcal{M} \rightarrow \mathcal{M}$, the gradient form of $L$ is given as $\Gamma_L: \mathcal{M} \times \mathcal{M} \rightarrow \mathcal{M}$:
\begin{equation}
\Gamma_L(x,y) = L(x^*y) - x^*L(y) - L(x^*)y,\ \forall x,y \in \mathcal{M}.
\end{equation}
\end{definition}
\noindent The gradient form $\Gamma_L$ is called \textit{completely positive} if
\begin{equation}
\forall m\ge 1, \forall X_m \in \mb M_m(\mathcal{M}) = \mb M_m \otimes \mathcal{M},\ \Gamma_{id_{\mb M_m} \otimes L}(X_m,X_m) \ge 0
\end{equation}
where 
$$\begin{aligned}
id_{\mb M_m} \otimes L: & \mb M_m \otimes \mathcal{M} \rightarrow \mb M_m \otimes \mathcal{M} \\
& x \otimes y \mapsto x \otimes L(y)
\end{aligned}$$
and $\Gamma_{id_{\mb M_m} \otimes L}: \mb M_m(\mathcal{M}) \times \mb M_m(\mathcal{M}) \rightarrow \mb M_m(\mathcal{M})$ is given by: $\forall X_m,Y_m \in \mb M_m(\mathcal{M})$,
$$
\begin{aligned}
&\Gamma_{id_{\mb M_m} \otimes L}(X_m,Y_m) = (id_{\mb M_m} \otimes L)(X_m^*Y_m) - X_m^*(id_{\mb M_m} \otimes L)(Y_m) - (id_{\mb M_m} \otimes L)(X_m^*)Y_m.
\end{aligned}$$
Similar to the idea of Choi matrix of a quantum channel, the complete positivity of gradient form is equivalent to the positivity of the matrix associated to the gradient form:

\begin{prop} \label{comparison}
For any bounded linear map $L: \mb M_n \to \mb M_n$, the following are equivalent
\begin{enumerate}
\item The gradient form $\Gamma_L$ is completely positive.
\item For any $m\ge 1$, and $\{x_i\}_{1\le i\le m} \subset \mb M_n$, $(\Gamma_L(x_i,x_j))_{1\le i,j\le m}$ is a positive matrix.
\item \begin{equation} \label{big matrix}
m_{L}: = \bigg[\Gamma_L(e_{rs},e_{tv})\bigg]_{(r,s),(t,v)} = \sum_{1\le r,s,t,v \le n} \ket{rs}\bra{tv} \otimes \Gamma_L(e_{rs},e_{tv})
\end{equation}
is a positive matrix.
\end{enumerate}
\end{prop}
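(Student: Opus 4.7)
\medskip

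\noindent\textbf{Proof plan.} The plan is to establish the chain $(1)\Leftrightarrow (2)\Leftrightarrow(3)$ by direct algebraic computations that exploit the sesquilinearity of $\Gamma_L$ (conjugate linear in the first slot, linear in the second, which one reads off from the defining formula $\Gamma_L(x,y)=L(x^*y)-x^*L(y)-L(x^*)y$ together with linearity of $L$).

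\medskip

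\noindent\textbf{Step 1: $(1)\Leftrightarrow (2)$.} For the forward direction, given $x_1,\dots,x_m\in\mb M_n$ form the rank one test element
\[
X_m \;=\; \sum_{j=1}^m e_{1j}\otimes x_j \;\in\; \mb M_m(\mb M_n).
\]
A short computation gives $X_m^*X_m=\sum_{i,j}e_{ij}\otimes x_i^*x_j$ and, entrywise,
\[
\Gamma_{id_{\mb M_m}\otimes L}(X_m,X_m)\;=\;\sum_{i,j=1}^{m} e_{ij}\otimes \Gamma_L(x_i,x_j),
\]
which is exactly the block matrix in (2). Hence (1) forces it to be positive. For the converse, write a general $X_m=\sum_{j,k}e_{jk}\otimes x_{jk}$ and compute, using the same pattern,
\[
\Gamma_{id_{\mb M_m}\otimes L}(X_m,X_m)\;=\;\sum_{k,\ell}e_{k\ell}\otimes\Bigl(\sum_{j}\Gamma_L(x_{jk},x_{j\ell})\Bigr).
\]
For fixed $j$ the block matrix $\bigl(\Gamma_L(x_{jk},x_{j\ell})\bigr)_{k,\ell}$ is positive by (2), and positivity is preserved under summation over $j$, proving (1).

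\medskip

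\noindent\textbf{Step 2: $(2)\Leftrightarrow (3)$.} The implication $(2)\Rightarrow(3)$ is the special case $x_{(r,s)}=e_{rs}$ with $m=n^2$. For $(3)\Rightarrow(2)$, expand each $x_i=\sum_{r,s}(x_i)_{rs}\,e_{rs}$ and use sesquilinearity to get
\[
\Gamma_L(x_i,x_j)\;=\;\sum_{r,s,t,v}\overline{(x_i)_{rs}}\,(x_j)_{tv}\,\Gamma_L(e_{rs},e_{tv}).
\]
Introduce the rectangular $n^2\times m$ matrix $A=\sum_{i,(r,s)}(x_i)_{rs}\,|rs\rangle\langle i|$. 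A direct calculation shows
\[
(A^*\otimes I_n)\, m_L\, (A\otimes I_n)\;=\;\sum_{i,j}|i\rangle\langle j|\otimes \Gamma_L(x_i,x_j),
\]
so the block matrix in (2) is a congruence of $m_L$, and thus positive once $m_L$ is. This closes the cycle.

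\medskip

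\noindent\textbf{Expected main obstacle.} No step is genuinely deep; the content is purely organizational. The one place where a slip is easy is keeping track of the two tensor factors in Step 1 (indices of $\mb M_m$ versus those of $\mb M_n$) and, in Step 2, making sure the auxiliary matrix $A$ is tensored with $I_n$ on the correct side so that the congruence $(A^*\otimes I_n)m_L(A\otimes I_n)$ actually lands in $\mb M_m\otimes \mb M_n$. Once those bookkeeping issues are handled, positivity follows from the standard fact that $B^*XB\ge 0$ whenever $X\ge 0$.
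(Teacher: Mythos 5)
Your proof is correct and follows essentially the same route as the paper: $(2)\Rightarrow(3)$ by specializing to matrix units, and $(3)\Rightarrow(2)$ by expanding $x_i$ in matrix units and using sesquilinearity, your congruence $(A^*\otimes I_n)\,m_L\,(A\otimes I_n)$ being just a repackaging of the paper's computation with the test elements $y_i$. The only difference is that you spell out the equivalence $(1)\Leftrightarrow(2)$ (rank-one test element for one direction, row decomposition for the other), which the paper dismisses as a direct calculation; your details there are accurate.
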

\begin{proof}
First the equivalence of $(1) \iff (2)$ follows from direct calculation. For $(2) \implies (3)$, we choose $m = n^2$ many matrices $\{e_{rs}\}_{1\le r,s \le n}$ and we can get $m_L$ is positive directly.  
Finally, we need to show $(3) \implies (2)$. For any $m\ge 1$, and $\{x_i\}_{1\le i\le m} \subset \mb M_n$, we denote $$x_i = \sum_{1\le r,s\le n} \rho_{rs}^i e_{rs}.$$
Then by the bilinear property of $\Gamma_L$, we have 
\begin{equation} \label{equiv step}
\Gamma_L(x_i,x_j) = \sum_{1\le r,s,t,v \le n}\overline{\rho_{rs}^i} \rho_{tv}^j \Gamma(e_{rs}, e_{tv}), \rho_{rs}^i \in \mb C.
\end{equation}
To show that $(\Gamma_L(x_i,x_j))_{i,j}$ is positive, it is equivalent to show that 
for any $y_i \in \mb M_n, 1\le i \le m$, $$\sum_{1\le i,j\le m} y_i^* \Gamma_L(x_i,x_j)y_j$$
is positive as an $n$ by $n$ matrix. We plug \eqref{equiv step} into the above matrix, we have 
\begin{align*}
\sum_{1\le i,j\le m} y_i^* \Gamma_L(x_i,x_j)y_j = \sum_{1\le r,s,t,v \le n} \sum_{1\le i,j \le m}y_i^* \overline{\rho_{rs}^i} \rho_{tv}^j \Gamma_L(e_{rs}, e_{tv}) y_j.
\end{align*}
If we denote $X_{rs} = \sum_{1\le i \le m} \rho_{rs}^i y_i \in \mb M_n$, the above is equal to 
$$\sum_{1\le r,s,t,v \le n} X_{rs}^* \Gamma_L(e_{rs},e_{tv})X_{tv},$$
which is positive by the definition of $m_L$.
\end{proof}
Now recall the definition of the order of the gradient form, first introduced in \cite{Lindblad76}: 
\begin{definition}
For two generators $L,L'$ of the form \eqref{general form},
\begin{equation}\label{definition: grad order}
\Gamma_L \le_{cp} \Gamma_{L'} \iff \forall m\ge 1, \forall X_m \in \mb M_m(\mc M), \Gamma_{id_{\mb M_m} \otimes L}(X_m,X_m) \le \Gamma_{id_{\mb M_m} \otimes L'}(X_m,X_m) 
\end{equation}
\end{definition}
By Proposition \ref{comparison}, we have 
$$\Gamma_{\mc L} \le_{cp} C\Gamma_{\mc L'} \iff m_{\mc L} \le Cm_{\mc L'}, C >0.$$

Now we introduce a notion called \textit{standard form} of a Lindblad generator. Let us see how to express a Lindblad generator with the form of \textit{(2)} if we know $L$ is given by the form of \textit{(3)}: 
$$Lx = i[H,x] + \sum_{\alpha,\beta \in \mathcal{I}_n \backslash \{(1,1)\}} c_{\alpha,\beta} \big[2F_{\alpha}^*xF_{\beta} - F_{\alpha}^*F_{\beta}x - xF_{\alpha}^*F_{\beta}\big]$$
We choose a $(n^2-1) \times (n^2-1)$ unitary matrix $U$ such that it diagonalizes the positive matrix $(c_{\alpha,\beta})_{\alpha,\beta \in \mathcal{I}_n \backslash \{(1,1)\}}$, i.e., 
\begin{equation}
c_{\alpha,\beta} = \sum_{\gamma \in \mathcal{I}_n \backslash \{(1,1)\}} (U^*)_{\alpha,\gamma} c_{\gamma} U_{\gamma,\beta} = \sum_{\gamma \in \mathcal{I}_n \backslash \{(1,1)\}} \overline{U_{\gamma,\alpha}} c_{\gamma} U_{\gamma,\beta}, \forall \alpha,\beta \in \mathcal{I}_n \backslash \{(1,1)\}
\end{equation}
where $c_{\gamma} \ge 0$ because $(c_{\alpha,\beta})_{\alpha,\beta \in \mathcal{I}_n \backslash \{(1,1)\}}$ is a positive matrix. Then if we define for $\gamma \in \mathcal{I}_n \backslash \{(1,1)\}$,
\begin{equation}
V_{\gamma} = \sum_{\alpha \in \mathcal{I}_n \backslash \{(1,1)\}}U_{\gamma,\alpha}F_{\alpha}
\end{equation}
we have 
\begin{equation}
Lx = i[H,x] + \sum_{\gamma \in \mathcal{I}_n \backslash \{(1,1)\}}c_{\gamma}\big[2V_{\gamma}^* x V_{\gamma} - V_{\gamma}^*V_{\gamma}x - x V_{\gamma}^*V_{\gamma}\big]
\end{equation}
Moreover, $\big \{I_n, V_{\gamma}; \gamma \in \mathcal{I}_n \backslash \{(1,1)\} \big\}$ is an orthonormal basis of $L^2(\mathcal{M},\tau)$. This leads to the following definition:
\begin{definition} 
The standard form of a generator $L$ of a quantum Markov semigroup is given by 
\begin{equation}\label{standard form}
Lx = i[H,x] + \sum_{j = 1}^m \big[V_j^* x V_j - \frac{1}{2} (V_j^*V_jx + x V_j^*V_j)\big]
\end{equation}
where $\big \{I_n, V_j; 1 \le j \le m \big\}$ is an orthonormal set of $L^2(\mathcal{M},\tau)$.
\end{definition}
\begin{remark}
By the definition of Hilbert Schmidt inner product, $\tau(V_j) = 0$ for all $1 \le j \le m$ where $\{V_j\}$ is given by \eqref{standard form}.
\end{remark}

\subsection{Quantum Markov semigroups with $\sigma-$detailed balance condition}
For any linear operator $T: \mathcal{M} \rightarrow \mathcal{M}$, we define its adjoint $T_*$ by
\begin{equation}
\langle T(x),y \rangle_{\text{HS}} = \langle x,T_*(y) \rangle_{\text{HS}}
\end{equation} 
We have a general class of operators with symmetry assumptions, known as the operator with $\sigma-$detailed balance condition. Denote the density matrix as 
\begin{itemize}
\item $\mathcal{D}_{\mathcal{H}}:=\{\rho \in \mathcal{M}: \rho \ge 0, \text{Tr}(\rho)=1\}$
\item $\mathcal{D}_{\mathcal{H}}^+:=\{\rho \in \mathcal{M}: \rho > 0, \text{Tr}(\rho)=1\}$
\end{itemize}
Given $\sigma \in \mathcal{D}_{\mathcal{H}}^+$, we can define a family of inner products: for $s\in [0,1]$
\begin{equation}
\langle x,y \rangle_{s} = \text{Tr}(\sigma^s x^* \sigma^{1-s} y)
\end{equation}
More generally, for any measurable function $f: (0,+\infty) \rightarrow (0,+\infty)$, we can define 
\begin{equation}
\langle x,y \rangle_{f} = \text{Tr}(\sigma x^* f(\sigma)yf(\sigma)^{-1})
\end{equation}
We have the following special classes of inner product:
\begin{itemize}
\item GNS-inner product: $\langle x,y \rangle_1 = \text{Tr}(\sigma x^*y)$.
\item KMS-inner product: $\langle x,y \rangle_{\frac{1}{2}} = \text{Tr}(\sigma^{\frac{1}{2}} x^* \sigma^{\frac{1}{2}} y)$.
\item Bogoliubov-Kubo-Mori(BKM)-inner product: $\langle x,y \rangle_{f_0} = \int_0^1 \text{Tr}(\sigma^{1-s} x^* \sigma^{s} y)ds$, where $f_0(t) = \frac{t-1}{\log t} = \int_0^1 t^sds$
\end{itemize}
\begin{definition}(operator with $\sigma-$detailed balance)
\\
We say a linear operator $T: \mathcal{M} \rightarrow \mathcal{M}$ satisfies $\sigma-$detailed balance condition if it is self-adjoint with respect to GNS-inner product, i.e., 
$$\langle Tx,y \rangle_1 = \langle x,Ty \rangle_1.$$
We say a linear operator $T: \mathcal{M} \rightarrow \mathcal{M}$ is self-adjoint if $T = T_*$.
\end{definition}
\begin{remark}
It is shown in \cite{CM17} that if $T$ is linear, $*-$preserving which satisfies $\sigma-$detailed balance condition, then $T$ is self-adjoint with respect to $\langle \cdot,\cdot \rangle_{f}$ for any measurable function $f$. In other words, operators with $\sigma-$detailed balance(GNS-symmetric operators) form the smallest class among $\text{GNS}, \text{KMS}, \text{BKM}$-symmetric operators.
\end{remark}
\begin{remark}
If we take $\sigma = \frac{1}{n}I_n$, then $T$ satisfies $\sigma-$detailed balance condition means $T$ is a self-adjoint operator. In this sense $\sigma-$detailed balance condition is more general than self-adjoint.
\end{remark}
We say a quantum Markov semigroup $\{T_t\}_{t\ge0}$ satisfies $\sigma-$detailed balance condition for $\sigma \in \mathcal{D}_{\mathcal{H}}^+$, if for any $t\ge 0$, $T_t$ satisfies $\sigma-$detailed balance condition. In terms of the standard form \eqref{standard form}, the jump operators $\{V_j\}_{j \in \mc J}$ have some special properties. In fact they appear in pairs: if $V_j$ appears as a jump operator, then $V_j^*$ also appears as a jump operator up to a multiplicative constant. The details are given as follows:
\begin{theorem}(\cite{Alicki76,GKFV77})
Suppose the quantum Markov semigroup $\{T_t\}_{t\ge0}$ satisfies $\sigma-$detailed balance condition for $\sigma \in \mathcal{D}_{\mathcal{H}}^+$. Denote $\{\sigma_1,\sigma_2,\cdots,\sigma_n\} \subseteq (0,+\infty)$ as the eigenvalues of $\sigma$. Then the standard form of the generator $L$ of $T_t = e^{tL}$ is given as 
\begin{equation}\label{generator with detailed balance}
Lx =\sum_{\gamma \in \mathcal{I}_n \backslash \{(1,1)\}} c_{\gamma}[V_{\gamma}^*xV_{\gamma} - \frac{1}{2}(xV_{\gamma}^*V_{\gamma} + V_{\gamma}^*V_{\gamma}x)],
\end{equation}
where $\{I_n, V_{\gamma};\gamma \in \mathcal{I}_n \backslash \{(1,1)\}\}$ constitutes the eigen-basis(in particular, it is an orthonormal basis) of the modular operator 
\begin{equation}
\begin{aligned}
\Delta_{\sigma}: & L^2(\mathcal{M},\tau) \rightarrow L^2(\mathcal{M},\tau) \\
& \rho \mapsto \sigma \rho \sigma^{-1}.
\end{aligned}
\end{equation}
Moreover, the constants $\{c_{\gamma}; \gamma \in \mc I_n \backslash \{(1,1)\}\}$, the eigenvalues of $\sigma$ and the eigen-basis of $\Delta_{\sigma}$ satisfy the following relations: for any $\gamma = (i,j) \in \mathcal{I}_n \backslash \{(1,1)\}$, denote $\gamma' = (j,i)$, we have
\begin{itemize}
\item $\Delta_{\sigma}(V_{\gamma}) = \sigma V_{ij}\sigma^{-1} = \frac{\sigma_i}{\sigma_j}V_{ij}$.
\item $V_{\gamma} = V_{\gamma}^*,\ \text{if}\ \sigma_i = \sigma_j;\ V_{\gamma'}  = V_{\gamma}^*,\ \text{if}\ \sigma_i \neq \sigma_j$.
\item $c_{\gamma}\ge0$ satisfies $c_{\gamma}\sigma_j = c_{\gamma'}\sigma_i\ (c_{ij}\sigma_j = c_{ji}\sigma_i)$.
\end{itemize}
\end{theorem}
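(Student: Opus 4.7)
The plan is to exploit the GKS representation of $L$ from item $(3)$ of Theorem \ref{Main theorem} with an orthonormal basis of $L^2(\mc M,\tau)$ tailored to the modular operator $\Delta_\sigma$. Diagonalize $\sigma = \sum_i \sigma_i \ket{i}\bra{i}$ and choose $\{F_\alpha\}_{\alpha\in\mc I_n}$ with $F_{(1,1)}=I_n$ so that each $F_\alpha$ is an eigenvector of $\Delta_\sigma$: take $F_{ij}=\sqrt{n}\ket{i}\bra{j}$ for $i\neq j$ (eigenvalue $\sigma_i/\sigma_j$) and any orthonormal completion of the diagonal subspace (eigenvalue $1$) containing $I_n$. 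Writing $L$ in GKS form with this basis and a Hamiltonian $H=H^*$ then reduces the problem to extracting the constraints on $(c_{\alpha\beta})$ and $H$ imposed by $\sigma$-detailed balance.

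Next I would translate the identity $\Tr(\sigma x^* L(y)) = \Tr(\sigma L(x)^* y)$ into a functional equation in the coefficients $c_{\alpha\beta}$ by testing against rank-one matrices in the eigenbasis of $\sigma$. The crucial bookkeeping step is to group the resulting terms according to the modular eigenvalue $\sigma_i/\sigma_j$ carried by each operator $F_\alpha^* (\cdot) F_\beta$: since $\Delta_\sigma$ has simple, rational-in-$\sigma_i$ eigenvalues, terms with distinct modular ratios cannot cancel each other. Consequently the relation must hold eigenvalue-by-eigenvalue, which forces the coefficient matrix $(c_{\alpha\beta})$ to be block-diagonal with respect to the eigenspace decomposition of $\Delta_\sigma$, and yields the auxiliary condition $[H,\sigma]=0$ together with a symmetry that allows the remaining Hamiltonian part to be reabsorbed into the dissipative sum by a shift of the jump operators (this is where the absence of the $i[H,\cdot]$ term in the final statement comes from).

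Having the block structure in hand, I would diagonalize each (positive) coefficient block by a unitary change of orthonormal basis inside the corresponding eigenspace of $\Delta_\sigma$, declaring the resulting eigenvectors to be the $V_\gamma$ and the eigenvalues the $c_\gamma\ge 0$. The first bullet, $\Delta_\sigma(V_{ij}) = (\sigma_i/\sigma_j) V_{ij}$, is then automatic by construction. For the second and third bullets I would use the elementary fact that $x\mapsto x^*$ maps the $\lambda$-eigenspace of $\Delta_\sigma$ onto the $\lambda^{-1}$-eigenspace. The detailed balance equation couples these paired blocks; comparing the $(c_{\alpha\beta})$-block at ratio $\sigma_i/\sigma_j$ with the one at ratio $\sigma_j/\sigma_i$ (via $F_\alpha \mapsto F_\alpha^*$) produces a simultaneous diagonalization in which the eigenvectors on one side are the adjoints of those on the other, giving $V_{\gamma'}=V_\gamma^*$, and the eigenvalues rescale exactly by $\sigma_i/\sigma_j$, which is the relation $c_{ij}\sigma_j = c_{ji}\sigma_i$. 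When $\sigma_i=\sigma_j$, the block is self-paired and one can choose self-adjoint eigenvectors, giving $V_\gamma = V_\gamma^*$.

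The main obstacle I anticipate is step two: cleanly disentangling the Hamiltonian and dissipative contributions in the detailed balance identity to conclude that $H$ commutes with $\sigma$ and that its contribution can be absorbed into the dissipative part without spoiling the orthonormality of the jump operators. The combinatorial pairing in step three is delicate but, once the block decomposition is established, follows from a direct matching argument using the $*$-structure and the positivity of the coefficient matrix.
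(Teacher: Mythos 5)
The paper does not actually prove this theorem: it is quoted as a known structural result with a citation to Alicki and to Gorini--Kossakowski--Frigerio--Verri, so there is no in-paper argument to compare yours against. Your sketch is essentially the standard proof from that literature (and from Carlen--Maas): pass to a GKS basis adapted to the eigenspaces of $\Delta_\sigma$, use GNS-self-adjointness to force the coefficient matrix to be block-diagonal along those eigenspaces, diagonalize each positive block, and read off the pairing $V_{\gamma'}=V_\gamma^*$ and $c_{ij}\sigma_j=c_{ji}\sigma_i$ from the fact that $x\mapsto x^*$ exchanges the $\lambda$- and $\lambda^{-1}$-eigenspaces of $\Delta_\sigma$ together with the $*$-preservation of $L$. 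That outline is sound, and the eigenvalue-by-eigenvalue separation you describe is really the statement that $L$ commutes with $\Delta_\sigma$ --- a cleaner lemma worth isolating rather than re-deriving through coefficient bookkeeping.

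The one step that would fail as written is your treatment of the Hamiltonian. You propose to ``reabsorb'' $i[H,\cdot]$ into the dissipative sum by shifting the jump operators; but in the standard form the jump operators are constrained to be traceless and orthonormal, so shifts $V_j\mapsto V_j+c_jI_n$ are not available, and even unconstrained shifts only generate Hamiltonian corrections built from the $V_j$ themselves, not an arbitrary $H$. The correct mechanism is stronger: once $[H,\sigma]=0$ is extracted from the detailed balance identity, $i[H,\cdot]$ is \emph{anti}-self-adjoint for the GNS inner product while the reduced dissipative part is self-adjoint, so GNS-self-adjointness of $L$ forces $i[H,\cdot]=0$, i.e.\ $H$ is a multiple of the identity and the coherent term simply disappears rather than being absorbed. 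With that correction, the remaining delicate point --- simultaneously diagonalizing the paired blocks at $\lambda$ and $\lambda^{-1}$ so that the eigenvectors on one side are the adjoints of those on the other --- does follow from the matching argument you indicate, since detailed balance identifies the block at $\lambda^{-1}$ with a positive rescaling of the conjugate of the block at $\lambda$.
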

\begin{remark}
For readers used to the notation in \cite{CM17}, for any $\gamma = (i,j)$, if we define $$e^{-\omega_{\gamma}} = \frac{\sigma_i}{\sigma_j},$$ 
which are the eigenvalues of $\Delta_{\sigma}$, then using the relation 
\begin{align*}
& V_{\gamma} = V_{\gamma}^*,\ \text{if}\ \sigma_i = \sigma_j;\ V_{\gamma'}  = V_{\gamma}^*,\ \text{if}\ \sigma_i \neq \sigma_j, \\
& c_{\gamma}\sigma_j = c_{\gamma'}\sigma_i\ (c_{ij}\sigma_j = c_{ji}\sigma_i)
\end{align*}
we can rewrite the generator with the same form as in \cite{CM17}: 
\begin{align*}
Lx & = \sum_{\gamma \in \mathcal{I}_n \backslash \{(1,1)\}} e^{-\omega_{\gamma}/2}\big(W_{\gamma}^*xW_{\gamma} - \frac{1}{2}(xW_{\gamma}^*W_{\gamma} + W_{\gamma}^*W_{\gamma}x)\big) \\
& = \sum_{\gamma \in \mathcal{I}_n \backslash \{(1,1)\}} e^{-\omega_{\gamma}/2}\big( W_{\gamma}^*[x,W_{\gamma}] + [W_{\gamma}^*,x]W_{\gamma}\big) \\
& = \sum_{\gamma \in \mathcal{I}_n \backslash \{(1,1)\}} \big( e^{-\omega_{\gamma}/2}W_{\gamma}^*[x,W_{\gamma}] +  e^{\omega_{\gamma}/2}[W_{\gamma},x]W_{\gamma}^*\big),
\end{align*}
where $\{I_n,W_{\gamma};\gamma \in \mc I_n \backslash \{(1,1)\}\}$ are orthonormal eigenvectors(not necessarily basis) of $\Delta_{\sigma}$ in $L_2(\mc M,\tau)$.
\end{remark}
Now we rewrite the expression \eqref{generator with detailed balance} by classifying the different eigenvalues $e^{-\omega_{\gamma}}$. Suppose 
$$Lx = \sum_{j \in \mc J}e^{-\omega_j/2} \big[ V_j^*xV_j -\frac{1}{2}(V_j^*V_jx + xV_j^*V_j) \big].$$
We partition the index set $\mc J$ as $\{\mc J_1, \cdots, \mc J_k, \cdots \mc J_l\}$ such that $|\mc J|=m = \sum_{k=1}^l |\mc J_k|, |\mc J_k| = m_k$, and $\forall k \le l, \forall j \in \mc J_k$, we have $ \omega_j = \omega_k.$ This means we only have $l$ different eigenvalues appearing in the generator $L$. Then
\begin{equation}\label{generator partition}
Lx = \sum_{k=1}^l e^{-\omega_k/2} \sum_{j \in \mc J_k}\big[ V_j^*xV_j -\frac{1}{2}(V_j^*V_jx + xV_j^*V_j) \big]. 
\end{equation}
We will use this notation in later sections.

Another simple implication is that for self-adjoint generator, our $\sigma$ is given by $\frac{1}{n}I_n$, which means the eigenvalues of $\sigma$ are identical to $\frac{1}{n}$, thus all $V_{\gamma}$ are self-adjoint. This gives the well-known form of self-adjoint generator:
\begin{cor}
Suppose the quantum Markov semigroup $\{T_t\}_{t\ge0}$ is self-adjoint, then the generator $L$ has the following standard form 
\begin{equation}
Lx = \sum_{j \in \mathcal{J}} \big[V_jxV_j - \frac{1}{2}(V_j^2x + xV_j^2)\big],
\end{equation}
where $V_j$ are self-adjoint matrices for any $j \in \mc J$. Moreover, $\{I_n, V_j;j \in \mathcal{J}\}$ are orthonormal sets and $\mc J$ is an index set with finite elements: $|\mathcal{J}| \le n^2 - 1$.
\end{cor}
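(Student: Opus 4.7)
The plan is to specialize the $\sigma$-detailed balance structure theorem stated just above the corollary to the case $\sigma = \frac{1}{n}I_n$. First I would observe that a self-adjoint generator in the sense $T = T_*$ (where the adjoint is with respect to the Hilbert--Schmidt inner product $\langle x,y\rangle_{\mathrm{HS}} = \tau(x^*y)$) is exactly the $\sigma$-DBC condition for $\sigma = \frac{1}{n}I_n$, since then $\langle x,y\rangle_1 = \mathrm{Tr}(\sigma x^*y) = \frac{1}{n}\langle x,y\rangle_{\mathrm{HS}}$, and rescaling does not affect which operators are self-adjoint.

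Next I would plug this choice of $\sigma$ into the theorem. All eigenvalues of $\sigma$ are equal to $\frac{1}{n}$, so $\sigma_i = \sigma_j$ for every pair $(i,j) \in \mathcal{I}_n\setminus\{(1,1)\}$. The second bullet of the theorem then forces $V_\gamma = V_\gamma^*$ for every $\gamma$, while the eigenvalue ratios $\sigma_i/\sigma_j = 1$ give $e^{-\omega_\gamma/2} = 1$. Substituting into \eqref{generator with detailed balance} yields
\begin{equation*}
Lx = \sum_\gamma c_\gamma\bigl[V_\gamma x V_\gamma - \tfrac{1}{2}(V_\gamma^2 x + x V_\gamma^2)\bigr],
\end{equation*}
where the $V_\gamma$ are self-adjoint, traceless (since $\tau(V_\gamma)=\langle I_n,V_\gamma\rangle_{\mathrm{HS}}=0$ by construction of the orthonormal basis), and indexed by a subset of $\mathcal{I}_n\setminus\{(1,1)\}$ which has cardinality $n^2-1$.

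Finally I would collapse the coefficients: define $\mathcal{J}$ to be the set of those $\gamma$ with $c_\gamma > 0$, and replace $V_\gamma$ by $\sqrt{c_\gamma}\,V_\gamma$ for $\gamma\in\mathcal{J}$. This absorbs the constants and produces the form claimed in the corollary, with self-adjoint traceless jump operators and $|\mathcal{J}|\le n^2-1$. The main subtlety worth flagging is that the rescaling $V_\gamma \mapsto \sqrt{c_\gamma}\,V_\gamma$ generally breaks normalization in $L^2(\mathcal{M},\tau)$, so ``orthonormal'' in the statement should be read in the sense inherited from the basis used in the preceding theorem (equivalently, the $V_j$ form an orthogonal system of self-adjoint traceless matrices, with $\tau(V_j)=0$ ensuring orthogonality to $I_n$); if one instead keeps the coefficients $c_\gamma$ explicit, full orthonormality of the $V_\gamma$'s is preserved verbatim. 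Either reading makes the corollary a direct book-keeping consequence of the theorem.
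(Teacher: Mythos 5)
Your proposal is correct and follows exactly the paper's own (one-line) argument: the paper simply observes that self-adjointness is $\sigma$-detailed balance for $\sigma=\frac{1}{n}I_n$, so all eigenvalues of $\sigma$ coincide and the structure theorem forces every $V_\gamma$ to be self-adjoint, giving the stated form. Your additional remark about the tension between absorbing $\sqrt{c_\gamma}$ into $V_\gamma$ and retaining orthonormality is a fair observation about the paper's conventions (the same issue is already present in its definition of the standard form), but it does not change the substance of the argument.
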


\section{Comparison principle for general quantum Markov semigroups}
In this section, we will prove the following criteria for comparing two gradient forms given by general Lindblad generators:
\begin{theorem}\label{main:comparison}
Suppose $L,L'$ are two generators of the form \eqref{general form}, i.e., 
$$
\begin{aligned}
& Lx = i[H,x] + \sum_{i=1}^m V_i^*xV_i - \frac{1}{2}(V_i^*V_i x + x V_i^*V_i),\\
& L'x = i[H',x] + \sum_{i=1}^{m'}V_i'^*xV_i' - \frac{1}{2}(V_i'^*V_i' x + x V_i'^*V_i')\end{aligned}$$
where $\mathcal{X} = \{V_i\}_{i=1}^m,\mathcal{X'}= \{V_i'\}_{i=1}^{m'}$ may not be orthonormal sets. Then $\Gamma_L \le C \Gamma_{L'}$ for some $C>0$ if and only if $\text{span}\{I_n, \mathcal{X}\} \subseteq \text{span}\{I_n, \mathcal{X'}\}$. Moreover, if we express our Lindblad generator as the standard form defined in \eqref{standard form}, then $\Gamma_L \le C \Gamma_{L'}$ for some $C>0$ if and only if $\text{span}\{\mathcal{X}\} \subseteq \text{span}\{\mathcal{X'}\}$.
\end{theorem}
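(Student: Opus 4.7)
My plan is to handle the two directions separately, with most of the work going into the ``only if'' part. The ``if'' direction is an immediate computation: if $V_j = a_j I_n + \sum_k \alpha_{jk} V'_k$ for scalars $a_j \in \mb C$ and matrix $A = (\alpha_{jk})$, then $[I_n,\cdot]=0$ gives $\delta_{V_j}=\sum_k \alpha_{jk}\delta_{V'_k}$, so
\[
\Gamma_L(x,x) = \sum_{k,\ell}(A^*A)_{k\ell}\,\delta_{V'_k}(x)^*\delta_{V'_\ell}(x) \le \|A\|^2\,\Gamma_{L'}(x,x)
\]
follows from the scalar inequality $A^*A \le \|A\|^2 I$ applied to the column $(\delta_{V'_k}(x))_k$. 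The same argument tensorizes with $\mathrm{id}_{\mb M_m}$, giving $\Gamma_L \le_{cp} \|A\|^2 \Gamma_{L'}$.

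For the converse, I would first use Proposition~\ref{comparison} to rephrase $\Gamma_L \le_{cp} C\,\Gamma_{L'}$ as the operator inequality $m_L \le C\,m_{L'}$ in $\mb M_{n^2}(\mc M)$. The key idea is to factor $m_L$ through the Stinespring-type dilation $W_L : \mc H \otimes \mc M \to \mc H \otimes \mb C^m$,
\[
W_L(\xi \otimes x) = \sum_j [V_j,x]\xi \otimes e_j,
\]
and similarly $W_{L'}$. A direct calculation of matrix elements identifies $W_L^*W_L$ with $m_L$ under the canonical isomorphism $\mb M_{n^2}(\mc M) \cong \mc B(\mc H \otimes \mc M)$, so the matrix inequality becomes $W_L^*W_L \le C\,W_{L'}^*W_{L'}$. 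Douglas's factorization lemma then furnishes an operator $T : \mc H \otimes \mb C^{m'} \to \mc H \otimes \mb C^m$ with $\|T\|^2 \le C$ and $W_L = TW_{L'}$.

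The crucial and most delicate step is to upgrade $T$ to the form $I_{\mc H} \otimes T_0$. The Leibniz rule $[V,xy]=[V,x]y + x[V,y]$ translates into the dilation identity $W_{L'}(\xi \otimes xy) = W_{L'}(y\xi \otimes x) + (x \otimes I)W_{L'}(\xi \otimes y)$, from which two facts follow: first, $\mathrm{ran}(W_{L'})$ is invariant under the left $\mc M$-action $x \mapsto x \otimes I$ on $\mc H \otimes \mb C^{m'}$; second, combining this with $W_L = TW_{L'}$ forces $T$ to intertwine this $\mc M$-action on $\mathrm{ran}(W_{L'})$. Because $\mc M$ is $*$-closed, $\mathrm{ran}(W_{L'})^\perp$ is also $\mc M$-invariant, so replacing $T$ by $TP$ (with $P$ the projection onto $\mathrm{ran}(W_{L'})$) turns $T$ into a global $\mc M$-intertwiner $\mc H \otimes \mb C^{m'} \to \mc H \otimes \mb C^m$. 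Irreducibility of $\mc H$ as an $\mb M_n$-module then forces $T = I_{\mc H} \otimes T_0$ for some $T_0 \in \mc B(\mb C^{m'}, \mb C^m)$.

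Unpacking $W_L = (I_{\mc H} \otimes T_0)W_{L'}$ yields $[V_j,x] = \sum_k (T_0)_{jk}[V'_k,x]$ for every $x \in \mc M$, so $V_j - \sum_k (T_0)_{jk}V'_k$ is central in $\mb M_n$ and therefore a scalar multiple of $I_n$. This delivers $V_j \in \mathrm{span}\{I_n, V'_1, \ldots, V'_{m'}\}$, as required. For the standard-form statement, every $V_j, V'_k$ is trace-free, so applying $\tau$ to $V_j = a_j I_n + \sum_k (T_0)_{jk}V'_k$ kills $a_j$ and gives $V_j \in \mathrm{span}\{V'_1, \ldots, V'_{m'}\}$. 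The step I expect to be the main obstacle is the module upgrade in paragraph three: correctly establishing the left-$\mc M$-invariance of $\mathrm{ran}(W_{L'})$ and extracting the scalar factor $T_0$ from the general Douglas operator $T$.
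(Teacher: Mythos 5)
Your proof is correct, and while the sufficiency direction coincides with the paper's (the paper writes the same column-operator identity as $D_L=(A\otimes I_n)D_{L'}$, giving $m_L=D_{L'}^*(A^*A\otimes I_n)D_{L'}\le \|A^*A\|\,m_{L'}$), your necessity argument takes a genuinely different route. The paper defines the map $\sum_j\delta_{L'}(x_j)y_j\mapsto\sum_j\delta_L(x_j)y_j$ on the derivation bimodule $\Omega_{\delta_{L'}}\subseteq C_{m'}(\mc M)$, proves it is a contraction up to $\sqrt{C}$, extends it to a bimodule map $C_{m'}(\mc M)\to C_m(\mc M)$ using injectivity of column spaces and the identity $\|T\|=\|T\|_{cb}$ for module maps, symmetrizes twice over the Haar measure of $U(n)$ to restore the left and right module properties, and finally invokes the classification of bimodule maps as $\mb M_{m\times m'}\otimes\mc M'=\mb M_{m\times m'}\otimes\mb C I_n$. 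You instead localize everything at the standard representation: $m_L=W_L^*W_L$ on $\mc H\otimes\mc M$, Douglas factorization produces the comparison operator $T$ outright (sidestepping the well-definedness issue the paper's algebraically defined $T$ carries implicitly), the Leibniz identity shows $\mathrm{ran}(W_{L'})$ is invariant under $x\otimes I$ and that $T$ intertwines there, and $*$-invariance plus Schur's lemma for the irreducible $\mb M_n$-module $\mc H$ yields $T=I_{\mc H}\otimes T_0$. Your version is more elementary and self-contained — no operator-space machinery, no Haar averaging — and each of your steps (range invariance, restriction to $TP$, extraction of $T_0$) checks out; what the paper's framework buys in exchange is generality, since its classification lemma in terms of the commutant $\mc M'$ survives when $\mc M$ is a proper subalgebra of $\mc B(\mc H)$ and is reused later (e.g.\ in Corollary \ref{drop} and Theorem \ref{Stability of PI}, where the explicit block structure of the bimodule map $A\otimes I_n$ is needed). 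The endgame — $[V_j,x]=\sum_k(T_0)_{jk}[V'_k,x]$ forcing $V_j-\sum_k(T_0)_{jk}V'_k\in\mb C I_n$, with the trace-zero normalization killing the scalar in the standard form — is identical in both treatments.
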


To prove the theorem, we also need some results in operator algebra theory. Given $m \ge 1$, define the $\mc M-$bimodule $$C_m(\mc M):= \bigg\{\begin{pmatrix}
a_1 \\
a_2 \\
\cdots \\
a_m
\end{pmatrix} : a_i \in \mc M \bigg\}
$$ with $\mc M-$valued inner product $$\langle\begin{pmatrix}
a_1 \\
a_2 \\
\cdots \\
a_m
\end{pmatrix}, \begin{pmatrix}
b_1 \\
b_2 \\
\cdots \\
b_m
\end{pmatrix}\rangle = \sum_{k=1}^m a_k^*b_k \in \mc M.$$ 
Now we can define the derivation $ \delta_L$
\begin{equation} \label{defn: derivation}
\begin{aligned}
\delta_L: & \mc M \rightarrow C_m(\mc M) \\
& x \mapsto \begin{pmatrix}
[V_1,x] \\
[V_2,x] \\
\cdots \\
[V_m,x]
\end{pmatrix}
\end{aligned}
\end{equation}
By direct calculation, we see that $$\Gamma_L(x,y) = \sum_{k=1}^m [V_k,x]^*[V_k,y] = \langle\delta_L(x), \delta_L(y)\rangle$$
Now define an $\mc M-$right submodule determined by $L$:
\begin{equation}\label{defn:bimodule}
\Omega_{\delta_L}:= \{\sum_j \delta_L(x_j)y_j:\ x_j,y_j \in \mc M\} \subseteq C_m(\mc M)
\end{equation}
with inner product defined by 
$$\langle \sum_j \delta_L(x_j)y_j, \sum_k \delta_L(x'_k)y'_k\rangle_{\Omega_{\delta_L}}:= \sum_{j,k}y_j^* \Gamma_L(x_j,x'_k)y'_k$$
Using the chain rule $\delta(xy)= \delta(x)y + x\delta(y), x,y \in \mc M$, $\Omega_{\delta_L}$ is actually an $\mc M$-bimodule, given by 
$$\Omega_{\delta_L}:= \{\sum_j z_j\delta_L(x_j)y_j:\ x_j,y_j,z_j \in \mc M \} \subseteq C_m(\mc M)$$
By replacing $m$ by $m'$, we can similarly define $\Omega_{\delta_{L'}} \subseteq C_{m'}(\mc M)$. 
Define the map 
\begin{equation}\label{defn:module}
\begin{aligned}
& T: \Omega_{\delta_{L'}} \rightarrow \Omega_{\delta_{L}} \\
& \sum_j \delta_{L'}(x_j)y_j \mapsto \sum_j \delta_{L}(x_j)y_j
\end{aligned}
\end{equation}
By using the fact $\delta(xy)= \delta(x)y + x\delta(y)$ and the definition, it is easy to see that $T$ is actually a bimodule map.
\begin{lemma}
Suppose $C>0$ is a fixed constant.
\begin{equation} \label{module map}
\begin{aligned}
\Gamma_L \le C\Gamma_{L'} \iff \langle T(\xi),T(\xi)\rangle_{\Omega_{\delta_L}} \le C\langle \xi,\xi\rangle_{\Omega_{\delta_{L'}}},\ \forall \xi \in \Omega_{\delta_{L'}}
\end{aligned}
\end{equation}
\end{lemma}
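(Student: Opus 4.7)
The plan is to unpack both sides of the equivalence into a single matrix-valued inequality over $\mc M$, and then identify this with the cp-ordering on gradient forms via Proposition \ref{comparison}.

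First I would compute the $\mc M$-valued inner products on both $\Omega_{\delta_L}$ and $\Omega_{\delta_{L'}}$ directly from the ambient inner products on $C_m(\mc M)$ and $C_{m'}(\mc M)$. For $\xi = \sum_j \delta_{L'}(x_j) y_j$ and (tentatively) $T\xi = \sum_j \delta_L(x_j) y_j$, the bilinearity of $\Gamma_{L'}, \Gamma_L$ together with the identity $\Gamma(x,y) = \langle \delta(x), \delta(y)\rangle$ noted just above \eqref{defn:bimodule} yields
$$\langle T\xi, T\xi\rangle_{\Omega_{\delta_L}} = \sum_{j,k} y_j^* \Gamma_L(x_j,x_k) y_k, \qquad \langle \xi,\xi\rangle_{\Omega_{\delta_{L'}}} = \sum_{j,k} y_j^* \Gamma_{L'}(x_j,x_k) y_k.$$
Hence the right-hand side of \eqref{module map}, as $\xi$ ranges over $\Omega_{\delta_{L'}}$, is equivalent to the assertion that for every finite pair of tuples $\{x_j\}, \{y_j\}$ in $\mc M$,
$$\sum_{j,k} y_j^* \Gamma_L(x_j,x_k) y_k \le C \sum_{j,k} y_j^* \Gamma_{L'}(x_j,x_k) y_k.$$

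Next I would apply Proposition \ref{comparison} to the difference $C\Gamma_{L'} - \Gamma_L$, which is itself the gradient form of the Lindbladian $CL'-L$ (after adjusting the Hamiltonian part). That proposition equates complete positivity of a gradient form with positivity, for every finite family $\{x_j\}$, of the $\mc M$-valued matrix $(\Gamma(x_j,x_k))_{j,k}$. Applied in our setting, it identifies $\Gamma_L \le_{cp} C\Gamma_{L'}$ with positivity of $(C\Gamma_{L'}(x_j,x_k) - \Gamma_L(x_j,x_k))_{j,k}$ for every $\{x_j\}$. Compressing this positive matrix against an arbitrary column $(y_j)$ with entries in $\mc M$ recovers exactly the displayed inequality, and conversely, since the $y_j$ are arbitrary, the displayed inequality forces the matrix to be positive. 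Both directions of \eqref{module map} then become tautological once the inner products are computed.

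The one subtle point I would address first is the well-definedness of $T$: if a representative $\sum_j \delta_{L'}(x_j) y_j$ vanishes in $C_{m'}(\mc M)$, one must check that $\sum_j \delta_L(x_j) y_j$ vanishes in $C_m(\mc M)$, so that $T\xi$ does not depend on the chosen representation. This is automatic from the forward direction of the computation above: under $\Gamma_L \le_{cp} C\Gamma_{L'}$ one has $0 \le \langle T\xi, T\xi\rangle_{\Omega_{\delta_L}} \le C\langle \xi,\xi\rangle_{\Omega_{\delta_{L'}}} = 0$, and positive-definiteness of the ambient inner product on $C_m(\mc M)$ forces $T\xi = 0$. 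The hard part is therefore not an argument at all but the bookkeeping of matching the bimodule inner product to the $(\Gamma_L(x_j,x_k))_{j,k}$-matrix picture; once that is in place, the rest is a definition chase through Proposition \ref{comparison}.
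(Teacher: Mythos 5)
Your argument is correct and follows essentially the same route as the paper's proof: expand both module inner products into $\sum_{j,k} y_j^*\Gamma(x_j,x_k)y_k$ and invoke Proposition \ref{comparison} to pass between the cp-order on gradient forms and positivity of the matrices $\bigl(\Gamma(x_j,x_k)\bigr)_{j,k}$ compressed against arbitrary columns $(y_j)$. Your additional remark on the well-definedness of $T$ is a worthwhile point that the paper leaves implicit, but it does not change the substance of the argument.
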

\begin{proof}
For any $\xi \in \Omega_{\delta_{L'}}$, there exist $x_j,y_j \in \mc M, 1\le j \le l$, such that $$\xi = \sum_{j=1}^l \delta_{L'}(x_j)y_j. $$
Then $T(\xi) = \sum_{j=1}^l \delta_{L}(x_j)y_j$. If $\Gamma_L \le C\Gamma_{L'}$, using Proposition \ref{comparison}, we know that
$$\big(\Gamma_L(x_i,x_j)\big)_{1\le i,j\le l} \le C\big(\Gamma_{L'}(x_i,x_j)\big)_{1\le i,j\le l},$$
which implies $$\sum_{i,j}y_i^*\Gamma_L(x_i,x_j)y_j \le C\sum_{i,j}y_i^*\Gamma_{L'}(x_i,x_j)y_j.$$
The necessary part follows easily if we recall that 
$$\langle \sum_j \delta_L(x_j)y_j, \sum_j \delta_L(x_j)y_j\rangle_{\Omega_{\delta_L}}= \sum_{i,j}y_i^* \Gamma_L(x_i,x_j)y_j.$$
For the sufficiency part, note that if for any finitely many matrices $x_j,y_j \in \mc M$,
$\sum_{i,j}y_i^*\Gamma_L(x_i,x_j)y_j \le C\sum_{i,j}y_i^*\Gamma_{L'}(x_i,x_j)y_j$ implies
$$\big(\Gamma_L(x_i,x_j)\big)_{1\le i,j\le l} \le C\big(\Gamma_{L'}(x_i,x_j)\big)_{1\le i,j\le l}.$$
Applying Proposition \ref{comparison} again, we have $\Gamma_L \le C\Gamma_{L'}$.
\end{proof}

We can naturally define a norm on the $\mc M-$bimodule $C_m(\mc M)$: $\|\xi\| = \|\langle \xi,\xi\rangle\|$. Now suppose $\Gamma_L \le C\Gamma_{L'}$, from the above lemma we know the operator norm of $T$ satisfies $||T|| \le \sqrt{C}$. We have the following extension lemma:
\begin{lemma}
There exists a bimodule map $\widehat{T}: C_{m'}(\mc M) \rightarrow C_m(\mc M)$, such that $$\widehat{T}|_{\Omega_{\delta'}} = T.$$
\end{lemma}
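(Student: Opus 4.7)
The plan is to construct $\widehat{T}$ as the composition of $T$ with the orthogonal projection onto $\Omega_{\delta_{L'}}$ inside the Hilbert $\mc M$-module $C_{m'}(\mc M)$. The key point is that this orthogonal projection is automatically an $\mc M$-bimodule map, so precomposing $T$ with it produces the desired bimodule extension without appealing to any Hahn-Banach-type argument.

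First I would show that the orthogonal complement $\Omega_{\delta_{L'}}^{\perp}$ with respect to the $\mc M$-valued inner product on $C_{m'}(\mc M)$ is again an $\mc M$-bimodule. Right invariance is immediate from the defining formula $\langle \xi,\eta\rangle = \sum_k \xi_k^*\eta_k$. For left invariance one uses the adjointability of the left action: a direct check yields $\langle a\cdot\xi,\eta\rangle = \langle \xi,a^*\cdot\eta\rangle$ for all $a\in\mc M$. Since $\Omega_{\delta_{L'}}$ is already known (from the chain rule discussion in the excerpt) to be closed under the left action, any $\eta\in\Omega_{\delta_{L'}}$ satisfies $a^*\cdot\eta\in\Omega_{\delta_{L'}}$, and if $\xi\in\Omega_{\delta_{L'}}^{\perp}$ then $\langle a\cdot\xi,\eta\rangle = \langle \xi,a^*\cdot\eta\rangle = 0$, so $a\cdot\xi\in\Omega_{\delta_{L'}}^{\perp}$.

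Since $C_{m'}(\mc M)$ is finite dimensional and $\mc M=\mb M_n$ is a finite dimensional C*-algebra, the closed submodule $\Omega_{\delta_{L'}}$ is orthogonally complemented as a Hilbert $\mc M$-module; combined with the previous step this gives a bimodule direct sum decomposition $C_{m'}(\mc M) = \Omega_{\delta_{L'}}\oplus \Omega_{\delta_{L'}}^{\perp}$. Let $P$ be the associated orthogonal projection onto $\Omega_{\delta_{L'}}$. By construction $P$ is a right $\mc M$-module map, and since both summands are left-invariant it is also a left $\mc M$-module map, hence a bimodule map with $\|P\|\le 1$. I would then set $\widehat{T} := T\circ P$, which is a bimodule map extending $T$ and satisfies $\|\widehat{T}\|\le \|T\|\le \sqrt{C}$, so no norm is lost.

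The main point requiring care is the bimodularity of the orthogonal complement, which is really the statement that the left $\mc M$-action on $C_{m'}(\mc M)$ is adjointable with respect to the $\mc M$-valued inner product. In the general infinite dimensional framework alluded to by the reference to Paulsen, one would need to invoke a Wittstock-type extension theorem for completely bounded bimodule maps; the content of this lemma is that in our finite dimensional matrix setting the elementary orthogonal-projection construction already does the job, because every closed submodule of a finite dimensional Hilbert $\mb M_n$-module is complemented.
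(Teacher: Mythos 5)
Your proof is correct, but it takes a genuinely different route from the paper. The paper extends $T$ in three steps: it first upgrades $T$ to a completely bounded map via $\|T\|=\|T\|_{cb}$ for module maps into column spaces, then invokes injectivity of $C_m(\mc M)$ as an operator space to get a linear extension $\widetilde{T}$ with the same cb-norm, and finally averages twice over the unitary group $U(n)$ with Haar measure to restore first the right- and then the left-module property. You instead exploit the finite dimensionality directly: you observe that the left action on $C_{m'}(\mc M)$ is adjointable ($\langle a\xi,\eta\rangle=\langle\xi,a^*\eta\rangle$), so the orthogonal complement of the sub-bimodule $\Omega_{\delta_{L'}}$ is again a sub-bimodule, and you set $\widehat{T}=T\circ P$ with $P$ the orthogonal projection. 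This is more elementary (no operator space theory, no Haar averaging) and has the added virtue that $\|P\|\le 1$ gives $\|\widehat{T}\|\le\|T\|\le\sqrt{C}$ for free; its cost is that it is tied to the finite dimensional (more precisely, self-dual) setting, whereas the paper's injectivity-plus-averaging argument is the one that survives in greater generality. The only step you assert without proof is that the closed submodule $\Omega_{\delta_{L'}}$ is orthogonally complemented with respect to the $\mc M$-valued inner product; this deserves one line, e.g.: let $P$ be the Hilbert space orthogonal projection onto $\Omega_{\delta_{L'}}$ for the scalar product $(\xi,\eta)=\Tr\langle\xi,\eta\rangle$; then for $\eta\in\Omega_{\delta_{L'}}$ and arbitrary $a\in\mc M$ one has $\Tr\big(\langle\xi-P\xi,\eta\rangle a\big)=(\xi-P\xi,\eta a)=0$, hence $\langle\xi-P\xi,\eta\rangle=0$, which shows $\xi-P\xi$ lies in the module-theoretic orthogonal complement and yields the decomposition $C_{m'}(\mc M)=\Omega_{\delta_{L'}}\oplus\Omega_{\delta_{L'}}^{\perp}$ you need. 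With that line added, your argument is a complete and valid alternative proof of the lemma.
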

\begin{proof}
Since $T: \Omega_{\delta_{L'}} \rightarrow C_m(\mc M)$ is a bounded bimodule map, we can use \cite[Proposition 8.2.2]{BM04} to get $||T|| = ||T||_{cb}$. By \cite{Ruan89}, $C_m(\mc M)$ is injective, which means for any operator space $W \subseteq Z$, and any completely bounded map $T:W \rightarrow C_m(\mc M)$, there exists a linear map $\widetilde{T}: Z \rightarrow C_m(M)$ extending $T$, such that $||T||_{cb} = ||\widetilde{T}||_{cb}$. Thus we can get an extension $\widetilde{T}: C_{m'}(\mc M) \rightarrow C_m(\mc M)$, such that $||\widetilde{T}||_{cb} = ||T||_{cb}$. Now take $\mu$ to be the normalized Haar measure on the unitary group $U(n) \subseteq \mc M$, and define 
\begin{equation}
\begin{aligned}
\dbtilde{T}&: C_{m'}(\mc M) \rightarrow C_m(\mc M) \\
& \xi \mapsto \int_{U(n)}\widetilde{T}(\xi u)u^* \mu(du)
\end{aligned}
\end{equation}
We claim that $\dbtilde{T}\big| _{\Omega_{\delta'}} = T$ and $\dbtilde{T}$ is a right module map. Indeed, for the first one, we have
$$\dbtilde{T}(x) = \int_{U(n)}\widetilde{T}(x u)u^* \mu(du) =\int_{U(n)}T(x u)u^* \mu(du) = \int_{U(n)}T(x) \mu(du) = T(x), \forall x \in \Omega_{\delta'}$$
To show $\dbtilde{T}$ is a right module map, we only need to show for any $v \in U(n)$, $\dbtilde{T}(\xi v) = \dbtilde{T}(\xi) v$ because any element in $\mc M = \mb M_n$ can be written as a linear combination of elements in $U(n)$. Indeed,
$$\dbtilde{T}(\xi v) = \int_{U(n)}\widetilde{T}(\xi vu)u^* \mu(du) =\int_{U(n)}\widetilde{T}(\xi vu)(vu)^*v \mu(du) = \int_{U(n)}\widetilde{T}(\xi u)u^*v \mu(du) = \dbtilde{T}(\xi)v $$
Note that for the third equality, we used left invariance of Haar measure. \\
Now we define $\widehat{T}$: 
\begin{equation}
\begin{aligned}
\widehat{T}&: C_{m'}(\mc M) \rightarrow C_m(\mc M) \\
& \xi \mapsto \int_{U(n)}u^*\dbtilde{T}(u\xi) \mu(du)
\end{aligned}
\end{equation}
We can check $\widehat{T}\big| _{\Omega_{\delta'}} = T$ and $\widehat{T}$ is a bimodule map, using similar argument as above. 
\end{proof}

\begin{lemma}
\begin{equation}
\begin{aligned}
& \{T: C_{m'}(\mc M)\rightarrow C_m(\mc M) \big|T\ bounded\ linear,T(axb) = aT(x)b,\forall x\in C_{m'}(\mc M), a,b \in \mc M \} \\
&= \mb M_{m\times m'} \otimes \mc M'
\end{aligned}
\end{equation}
where $\mc M'$ is the commutant of $\mc M$. In our case, since $\mc M' = \{cI_n\}$, the above is equal to $\mb M_{m\times m'} \otimes I_n$.
\end{lemma}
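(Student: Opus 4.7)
I would prove the two inclusions separately, noting that the nontrivial content sits in the converse inclusion. The forward inclusion is simply unraveling how a ``matrix with entries in the commutant'' acts diagonally on the column bimodule.

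For $\mb M_{m\times m'}\otimes \mc M'\subseteq\{\text{bounded bimodule maps}\}$, given $A=(a_{ji})\otimes c$ with $a_{ji}\in\mb C$ and $c\in\mc M'$, I set $T(\xi)_j=\sum_i a_{ji}\,c\,\xi_i$ for $\xi=(\xi_1,\dots,\xi_{m'})^T\in C_{m'}(\mc M)$. Boundedness and linearity are immediate; the bimodule identity $T(a\xi b)=aT(\xi)b$ reduces to the fact that $c\in\mc M'$ commutes with $a$ and that left/right multiplication by elements of $\mc M$ act componentwise on the columns.

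For the converse, let $\{e_i\}_{i=1}^{m'}$ and $\{f_j\}_{j=1}^{m}$ denote the standard generators of $C_{m'}(\mc M)$ and $C_m(\mc M)$ respectively, where $e_i$ is the column with $I_n$ in the $i$-th slot and zero elsewhere, and similarly for $f_j$. Any $\xi\in C_{m'}(\mc M)$ decomposes as $\xi=\sum_i e_i\xi_i$ via the right action, so by the right module property $T(\xi)=\sum_i T(e_i)\xi_i$, and $T$ is completely determined by the values $T(e_i)=\sum_j f_j\,t_{ji}$ for certain matrices $t_{ji}\in\mc M$. At this stage $T$ is already identified with an element of $\mb M_{m\times m'}(\mc M)$. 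The key observation is that on the generators one has $ae_i=e_ia$ for every $a\in\mc M$, since both sides are the column with $a$ in slot $i$ and zero elsewhere. Applying the bimodule property gives
$$\sum_j f_j\,(at_{ji})=a\,T(e_i)=T(ae_i)=T(e_ia)=T(e_i)\,a=\sum_j f_j\,(t_{ji}a)$$
for every $a\in\mc M$, which forces $at_{ji}=t_{ji}a$, i.e.\ $t_{ji}\in\mc M'$. The assignment $T\mapsto(t_{ji})_{j,i}$ is then the desired isomorphism with $\mb M_{m\times m'}\otimes\mc M'$, and specializing to $\mc M=\mb M_n$, where $\mc M'=\mb C I_n$, collapses the right-hand side to $\mb M_{m\times m'}\otimes I_n$.

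\textbf{Expected obstacle.} There is no deep obstacle here; the lemma is essentially a coordinate calculation in $\mc M$-bimodule theory. The only subtlety worth flagging is the identity $ae_i=e_ia$ on standard generators, which is the coincidence that converts the two-sided bimodule condition into an honest commutation relation $t_{ji}\in\mc M'$ — without evaluating on these specific generators, the commutant condition is not visible.
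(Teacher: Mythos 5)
Your proof is correct, and the second half — forcing the matrix entries into the commutant via the left module property — is the same computation the paper performs. Where you differ is in how you obtain the matrix form of $T$ in the first place: the paper observes that a bounded right module map is adjointable in finite dimensions and then cites the general Hilbert-module fact (Lance) that adjointable maps between column modules are given by matrices over $\mc M$ acting by left multiplication, whereas you derive this directly from the decomposition $\xi=\sum_i e_i\xi_i$ and the right module property, using that $C_{m'}(\mc M)$ is finitely generated and free as a right module. Your route is more elementary and self-contained, and it makes transparent exactly where the hypothesis is used; the paper's route is shorter on the page but imports machinery (adjointability, the classification of adjointable maps) that is not really needed here. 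Your observation that $ae_i=e_ia$ on the standard generators is also a cleaner way to extract the commutation relation than the paper's "take $a$ and $x_j$ arbitrary" argument, which implicitly requires specializing $x_j$ to matrix units anyway. Both proofs are complete; yours would be the preferable one to include in a self-contained exposition.
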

\begin{proof}
For any bounded right module map $T: C_{m'}(\mc M)\rightarrow C_m(\mc M)$, it is adjointable because $L^2(\mc M,\tau)$ is finite dimensional in this case(this means we can take its adjoint). Then it is well-known that \cite{Lance95} $$T \in \mb M_{m\times m'} \otimes \mathcal{L}(\mc M) \simeq  \mb M_{m\times m'}(\mc M),$$
where $\mc L(\mc M)$ is all the linear maps on $\mc M$. 

Suppose $T = \big[a_{ij}\big]_{1\le i \le m, 1\le j \le m'},\ a_{ij} \in \mc M$, seen as left multiplication. Then the left module property gives us for all $a \in \mc M$,
\begin{align*}
& T\bigg(a \begin{pmatrix}
x_1\\
\cdots\\
x_{m'}
\end{pmatrix}\bigg) = a T\bigg(\begin{pmatrix}
x_1\\
\cdots\\
x_{m'}
\end{pmatrix}\bigg) \\
& = \begin{pmatrix}
a_{11} & \cdots & a_{1m'} \\
\cdots &\cdots & \cdots \\
a_{m1} & \cdots & a_{mm'}
\end{pmatrix} 
\begin{pmatrix}
a x_1 \\ 
\cdots \\
a x_{m'}
\end{pmatrix} 
 = (I_m\otimes a)
\begin{pmatrix}
a_{11} & \cdots & a_{1m'} \\
\cdots &\cdots & \cdots \\
a_{m1} & \cdots & a_{mm'}
\end{pmatrix} 
\begin{pmatrix}
 x_1 \\ 
\cdots \\
 x_{m'}
\end{pmatrix} \\
& \iff \sum_{j=1}^{m'} a_{ij} a x_j = \sum_{j=1}^{m'} a a_{ij}x_j, \forall 1\le i \le m
\end{align*}
where $a,x_j \in \mc M,\ 1\le j \le m' $ are arbitrary, which implies $aa_{ij} = a_{ij}a$ thus $a_{ij} \in \mc M'$.
\end{proof}
\noindent \textbf{Proof of Theorem \ref{main:comparison}:} \\
Sufficiency: We define $D \in \mb M_{m \times n^2}(\mb M_n)$: 

\begin{equation}\label{D matrix:1}
D_L = \bigg([V_i,e_{rs}]\bigg)_{1\le i \le m, 1\le r,s \le n}.
\end{equation}
By direct calculation, we have 
\begin{align*}
D_L^*D_L = \big(\sum_{k=1}^m [V_k,e_{rs}]^*[V_k,e_{tv}]\big)_{rs,tv} =\big(\Gamma_L(e_{rs},e_{tv})\big)_{rs,tv} = m_L.
\end{align*}
Since $span\{V_j\}_{1\le j\le m} \subseteq span\{\wt V_j, I_n;1\le j \le m'\}$, there exists $\alpha_{ij}, c_i \in \mb C, 1\le i\le m, 1\le j \le m'$ such that $$V_i = \sum_{j=1}^{m'} \alpha_{ij}\widetilde{V_j} + c_i I_n$$ for any $1\le i \le m$. Then
\begin{equation} \label{D matrix:2}
\begin{aligned}
D_L & = \bigg([V_i,e_{rs}]\bigg)_{1\le i \le m, 1\le r,s \le n} = \bigg([\sum_{j=1}^{m'}\alpha_{ij}\wt V_j,e_{rs}]\bigg)_{1\le i \le m, 1\le r,s \le n} \\
& = \bigg(\sum_{j=1}^{m'}\alpha_{ij}[\wt V_j,e_{rs}]\bigg)_{1\le i \le m, 1\le r,s \le n} \\
& = (A\otimes I_n) D_{L'}
\end{aligned}
\end{equation}
where $A = (\alpha_{ij})_{1\le i\le m, 1\le j \le m'} \in \mb M_{m \times m'}$ and 
$D_{L'} = \bigg([\wt V_i,e_{rs}]\bigg)_{1\le i \le m', 1\le r,s \le n} \in \mb M_{m' \times n^2}(\mb M_n)$. \\
Thus we have 
\begin{equation}
m_L = D_L^*D_L = D_{L'}^*(A^*A \otimes I_n) D_{L'}\le C D_{L'}^*D_{L'} = Cm_{L'}.
\end{equation}
where $C = ||A^*A \otimes I_n||$, which is equivalent to $\Gamma_L \le C \Gamma_{L'}$.  

\noindent Necessity: Our assumption is $\Gamma_L \le C \Gamma_{L'}$. Recall we can define the bimodule map \begin{equation}
\begin{aligned}
& T: \Omega_{\delta_{L'}} \rightarrow \Omega_{\delta_{L}} \\
& \sum_j \delta_{L'}(x_j)y_j \mapsto \sum_j \delta_{L}(x_j)y_j
\end{aligned}
\end{equation}
By the above lemmas, we have for any $x \in \mc M$, 
\begin{align*}
T(\delta_{L'}(x)) = \widehat{T}|_{\Omega_{\delta'}}(\delta_{L'}(x)) 
& =\begin{pmatrix}
\alpha_{11}I_n & \alpha_{12}I_n & \cdots & \alpha_{1m'}I_n \\
\alpha_{21}I_n & \alpha_{22}I_n & \cdots & \alpha_{2m'}I_n \\
\cdots & \cdots & \cdots & \cdots\\
\alpha_{m1}I_n & \alpha_{m2}I_n & \cdots & \alpha_{mm'}I_n
\end{pmatrix}
\begin{pmatrix}
[\widetilde{V_1}, x] \\
[\widetilde{V_2}, x] \\
\cdots \\
[\widetilde{V_{m'}}, x]
\end{pmatrix}  \\
& = \begin{pmatrix}
[\sum_{j=1}^{m'}\alpha_{1j}\wt V_j, x] \\
[\sum_{j=1}^{m'}\alpha_{2j}\wt V_j, x] \\
\cdots \\
[\sum_{j=1}^{m'}\alpha_{mj}\wt V_j, x]
\end{pmatrix} = \begin{pmatrix}
[V_1, x] \\
[V_2, x] \\
\cdots \\
[V_m, x]
\end{pmatrix},
\end{align*}
which means for any $i = 1,2,\cdots,m$, 
\begin{equation}\label{linear span}
V_i = \sum_{j=1}^{m'} \alpha_{ij}\widetilde{V_j}+ c_i I_n
\end{equation}
i.e., $span \mc X \subseteq span\{\mc X', I_n\}$. In particular, if $\{V_i\}$ are trace zero orthonormal sets, we have $V_i = \sum_{j=1}^{m'} \alpha_{ij}\widetilde{V_j}$ thus $span \mc X \subseteq span \mc X'$.\qed

If we express $L,L'$ as the standard form. $\{V_i\}_{1\le i \le m}, \{\wt V_i\}_{1\le i \le m'}$ are the traceless orthonormal jump operators. If $\Gamma_L \le C\Gamma_{L'}$ for some constant $C>0$, then by our theorem, $span\{V_i\}_{1\le i \le m}\subseteq span\{\wt V_i\}_{1\le i \le m'}$, which implies $m\le m'$. If $m' > m$, we assume 
$$span\{V_i\}_{1\le i \le m}= span\{\wt V_i\}_{1\le i \le m}.$$
Then we can define the truncation $\widehat{L'}$ of $L'$ by 
\begin{equation}\label{truncation generator}
\widehat{L'}(x) = i[H',x] + \sum_{j=1}^m \big[\wt V_j^*x\wt V_j - \frac{1}{2}(\wt V_j^*\wt V_jx + x\wt V_j^*\wt V_j)\big].
\end{equation}
We can show that the redundant part of $L'$ does not affect the comparison of Lindblad generators: 
\begin{cor}\label{drop}
For any $C>0, C_1\ge 0$, 
\begin{align*}
\Gamma_L \le C\Gamma_{\widehat{L'}} + C_1 \Gamma_{L'-\widehat{L'}} \iff \Gamma_L \le C\Gamma_{\widehat{L'}}.
\end{align*}
\end{cor}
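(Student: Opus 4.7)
The $(\Leftarrow)$ direction is immediate: since $L'-\widehat{L'}$ is itself a Lindblad-type generator (with jump operators $\{\wt V_j\}_{m<j\le m'}$), its gradient form $\Gamma_{L'-\widehat{L'}}$ is completely positive, and so any bound $\Gamma_L\le_{cp} C\Gamma_{\widehat{L'}}$ automatically entails $\Gamma_L\le_{cp} C\Gamma_{\widehat{L'}}+C_1\Gamma_{L'-\widehat{L'}}$ for every $C_1\ge 0$. The content is in the forward direction.

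The key reformulation for $(\Rightarrow)$ is that the right-hand side $C\Gamma_{\widehat{L'}}+C_1\Gamma_{L'-\widehat{L'}}$ is itself the gradient form of a single Lindblad generator $\wt L$ whose (rescaled) jump operators are $\{\sqrt{C}\,\wt V_j\}_{1\le j\le m}\cup\{\sqrt{C_1}\,\wt V_j\}_{m<j\le m'}$, so the hypothesis reads $\Gamma_L\le_{cp}\Gamma_{\wt L}$. The plan is to apply Theorem~\ref{main:comparison} quantitatively to both comparisons. Its proof actually establishes more than the bare existence of a constant: whenever the dominating jump operators are linearly independent, the comparison $\Gamma_L\le_{cp} C'\Gamma_{\text{RHS}}$ is equivalent to the spectral estimate $\|A\|^2\le C'$ on the (unique) coefficient matrix $A$ expressing $V_i$ as a linear combination of the dominating jump operators. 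Sufficiency is the identity $m_L=D^*(A^*A\otimes I_n)D$ already written out in \eqref{D matrix:2}; necessity follows because the bimodule extension $\wh T$ constructed in the proof is left multiplication by $A\otimes I_n$, and the injectivity lemma gives $\|A\|=\|A\otimes I_n\|=\|\wh T\|_{cb}=\|T\|_{cb}\le\sqrt{C'}$.

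The standing hypothesis $\mathrm{span}\{V_i\}_{1\le i\le m}=\mathrm{span}\{\wt V_j\}_{1\le j\le m}$ lets me write $V_i=\sum_{j\le m}\alpha_{ij}\wt V_j$ for a unique matrix $A=(\alpha_{ij})$. Re-expressed in the jump operators of $\wt L$ this becomes $V_i=\sum_{j\le m}(\alpha_{ij}/\sqrt{C})(\sqrt{C}\,\wt V_j)+\sum_{j>m}0\cdot(\sqrt{C_1}\,\wt V_j)$, so the coefficient matrix for the $\wt L$-comparison is $\tilde A=(A/\sqrt{C}\mid 0)\in\mb M_{m\times m'}$, and $\|\tilde A\|^2=\|A\|^2/C$. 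Hence the hypothesis $\Gamma_L\le_{cp}\Gamma_{\wt L}$ is equivalent to $\|A\|^2\le C$, which by Theorem~\ref{main:comparison} applied directly to $\widehat{L'}$ is in turn equivalent to the conclusion $\Gamma_L\le_{cp} C\Gamma_{\widehat{L'}}$. Note how $C_1$ drops out: scaling the redundant jump operators only adds a zero block to $\tilde A$ and so cannot lower its norm.

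The main delicate step, and where the proof will need the most care, is extracting the sharp spectral equivalence $\Gamma_L\le_{cp} C'\Gamma_{\text{RHS}}\Leftrightarrow\|A\|^2\le C'$ from Theorem~\ref{main:comparison}, whose printed statement is only an iff for existence of some constant. This refinement is implicit in the proof but requires uniqueness of $A$, which holds under the standing orthonormality and trace-zero conditions on the dominating jump operators; once that is in hand, the rest of the argument is purely arithmetic bookkeeping about the scaling by $\sqrt{C}$ and $\sqrt{C_1}$.
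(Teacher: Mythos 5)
Your proof is correct, and at the decisive step it takes a genuinely different route from the paper's. Both arguments reduce the forward implication to the spectral estimate $\|A^*A\|\le C$ for the coefficient matrix $A$ with $V_i=\sum_{j\le m}\alpha_{ij}\wt V_j$, and both pass from that estimate to the conclusion via the factorization $m_L=D_{\widehat{L'}}^*(A^*A\otimes I_n)D_{\widehat{L'}}$. You extract the estimate by rescaling the jump operators by $\sqrt{C}$ and $\sqrt{C_1}$ so that the right-hand side becomes a single gradient form, and then bounding the resulting coefficient matrix $(A/\sqrt{C}\mid 0)$ through the cb-norm of the bimodule extension $\wh T$; as you correctly flag, this quantitative sharpening of Theorem \ref{main:comparison} needs linear independence of the dominating jump operators (hence uniqueness of the coefficients) and tracelessness, both of which hold here, the degenerate case $C_1=0$ being vacuous. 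The paper instead argues by contradiction via Theorem \ref{Main theorem}: if $C<\|A^*A\|$, the coefficient matrix of $C\widehat{L'}-L+C_1(L'-\widehat{L'})$ relative to the orthonormal family $\{\wt V_j\}_{1\le j\le m'}$ is $\mathrm{diag}(CI_m-A^*A,\ C_1I_{m'-m})$, which is not positive, so the corresponding gradient form cannot be completely positive. The paper's route is more self-contained at this point --- positivity of a finite block-diagonal matrix is read off directly, which also makes transparent why $C_1$ plays no role --- whereas yours isolates a reusable quantitative statement (the optimal comparison constant equals $\|A^*A\|$), which is precisely the fact the paper reuses implicitly in Corollary \ref{equivalent comparison} and in the constants $C_k=\|A_k^*A_k\|$ appearing in Theorem \ref{Stability of PI}.
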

\begin{proof}
Sufficiency is obvious. We only need to prove the necessity part. If $\Gamma_L \le C\Gamma_{\widehat{L'}} + C_1 \Gamma_{L'-\widehat{L'}}$, then by Theorem \ref{main:comparison} and the assumption $span\{V_i\}_{1\le i \le m}= span\{\wt V_i\}_{1\le i \le m}$, we know that the bimodule map $T: \Omega_{\delta_{L'}}\to \Omega_{\delta_{L}}$ defined by \eqref{defn:module} is given by $A \otimes I_n$ as left multiplication, where $A= (\widehat{A}\ 0) \in \mb M_{m\times m'}$ and $\widehat{A} \in \mb M_m$. Then following the same procedure \eqref{D matrix:2} as in the proof of Theorem \ref{main:comparison}, we have $D_L = (A\otimes I_n)D_{L'}$ and $D_L = (\widehat{A}\otimes I_n)D_{\widehat{L'}}$.
\begin{align*}
D_L^*D_L =  D_{L'}^*(A^*A\otimes I_n)D_{L'} = D_{\widehat{L'}}^*(\widehat{A}^*\widehat{A}\otimes I_n)D_{\widehat{L'}}
\end{align*} 
Then we know that $\Gamma_L \le \|A^*A\| \Gamma_{\widehat{L'}}.$ Thus we only need to prove $$C \ge \|A^*A\|.$$
In fact, if $C < \|A^*A\| = \|\widehat{A}^*\widehat{A}\|$, we know that $C I_m - \widehat{A}^*\widehat{A}$ has at least one negative eigenvalue by the definition $\|A^*A\| = \lambda_{max}(A^*A)$. Then 
\begin{align*}
& C\widehat{L'} - L + C_1(L' - \widehat{L'})(x) \\
& = i[\wt H,x] + \sum_{1\le i,j\le m}(C\delta_{i,j} - \beta_{ij}) \big[ \wt V_i^*x\wt V_j - \frac{1}{2}(\wt V_i^*\wt V_jx + x\wt V_i^*\wt V_j) \big] \\
&+ \sum_{j=m+1}^{m'} \big[\wt V_j^*x\wt V_j - \frac{1}{2}(\wt V_j^*\wt V_jx + x\wt V_j^*\wt V_j)\big],
\end{align*}
where $(\beta_{ij}) = \widehat{A}^*\widehat{A}$. The coefficient matrix with respect to $\{\wt V_j\}_{1\le j\le m'}$ is given by $$\begin{pmatrix}
CI_m -\widehat{A}^*\widehat{A} & 0\\
0 & C_1 I_{m'-m}
\end{pmatrix}
$$
which is not a positive matrix. Thus by Theorem \ref{Main theorem}, $\Gamma_{C\widehat{L'} - L + C_1(L' - \widehat{L'})}$ is not completely positive, which contradicts to the assumption. 
\end{proof}

The proof of the above theorem also shows that the comparison of two Lindblad generators $L,L'$ is characterized by the comparison of their \textit{jump map} $\Psi_L,\Psi_{L'}$:
\begin{cor}\label{equivalent comparison}
Suppose $L,L'$ are two Lindblad generators, and $\Psi_L,\Psi_{L'}$ are their corresponding jump maps. Then for any constant $C>0$,
\begin{equation}
\Gamma_L \le C \Gamma_{L'} \iff \Psi_L \le C \Psi_{L'}
\end{equation}
\begin{proof}
Suppose $\Gamma_L \le C \Gamma_{L'}$. Then $span\{V_j\}_{1\le j \le m} \subseteq span\{\wt V_j\}_{1\le j \le m'}$ by Theorem \ref{main:comparison}, where $\{V_j\}_{1\le j \le m}, \{\wt V_j\}_{1\le j \le m'}$ are the traceless orthonormal jump operators of $L,L'$. Without loss of generality, we assume that (recall $m \le m'$)
$$span\{V_j\}_{1\le j \le m} = span\{\wt V_j\}_{1\le j \le m}.$$
Then there exists a full-rank matrix $A = (\alpha_{ij}) \in \mb M_m$, such that
\begin{equation}\label{linear span}
V_i = \sum_{j=1}^m \alpha_{ij} \wt V_j, \forall 1\le i\le m. 
\end{equation} 
Then we have 
\begin{align*}
m_L = D_L^*D_L = D_{\widehat{L'}}^*(A^*A\otimes I_n)D_{\widehat{L'}} \le \|A^*A\| D_{\widehat{L'}}^*D_{\widehat{L'}} \le \|A^*A\| m_{L'}.
\end{align*}
We can apply the same argument as in Corollary \ref{drop} to show that $C\ge \|A^*A\|$. 
On the other hand, recall that for two completely positive map, $\Psi_L \le C \Psi_{L'}$ if any only if their corresponding Choi matrices $\tau_{\Psi_L} \le C \tau_{\Psi_{L'}}$, where $\tau_{\Psi_L} = \big(\sum_{k=1}^m V_k^*\ket{r}\bra{s}V_k\big)_{1\le r,s \le n} \in \mb M_n(\mb M_n)$. Now define 
\begin{equation}
W_{\Psi_L} = \big(\bra{r}V_i\big)_{1\le i \le m, 1\le r\le n} \in \mb M_{m\times n}(\mb M_{1 \times n}).
\end{equation}
Then it is easy to check $\tau_{\Psi_L} = W_{\Psi_L}^*W_{\Psi_L}$. Moreover, by \eqref{linear span}, we have $ W_{\Psi_L} =  A W_{\Psi_{\widehat{L'}}}$. Thus 
\begin{align*}
\tau_{\Psi_L} = W_{\Psi_L}^*W_{\Psi_L} = W_{\Psi_{\widehat{L'}}}^*(A^*A) W_{\Psi_{\widehat{L'}}} \le \|A^*A\|\tau_{\Psi_{L'}} \le C \tau_{\Psi_{L'}}.
\end{align*}
For the other direction, classical result implies that $span\{V_j\}_{1\le j \le m} \subseteq span\{\wt V_j\}_{1\le j \le m'}$ and the constant $C \ge \|A^*A\|.$
\end{proof}
\end{cor}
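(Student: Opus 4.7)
My plan is to use Theorem \ref{main:comparison} and Corollary \ref{drop} as the structural inputs, then translate the linear-algebraic characterization of $\Gamma_L \le C\Gamma_{L'}$ into the analogous characterization of $\Psi_L \le_{cp} C\Psi_{L'}$. The key observation is that both the matrix $m_L$ associated to the gradient form and the Choi matrix $\tau_{\Psi_L}$ of the jump map admit factorizations $D_L^* D_L$ and $W_{\Psi_L}^* W_{\Psi_L}$ respectively, built from the same jump operators $\{V_j\}$; so a linear relation among the $V_j$ yields matching comparisons on both sides.

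For the forward direction, assume $\Gamma_L \le C\Gamma_{L'}$. First I would pass to standard form so that $\{V_i\}_{i=1}^m$ and $\{\wt V_j\}_{j=1}^{m'}$ are traceless orthonormal sets. Theorem \ref{main:comparison} then yields $\mathrm{span}\{V_i\} \subseteq \mathrm{span}\{\wt V_j\}$, so $m \le m'$ and (after relabeling) I may assume $\mathrm{span}\{V_i\}_{i=1}^{m} = \mathrm{span}\{\wt V_j\}_{j=1}^{m}$. This gives a full-rank $A = (\alpha_{ij}) \in \mb M_m$ with $V_i = \sum_j \alpha_{ij} \wt V_j$. Define the truncation $\widehat{L'}$ as in \eqref{truncation generator}. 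The argument used inside the proof of Corollary \ref{drop}, applied with $C_1 = 0$ and the positivity criterion from Theorem \ref{Main theorem}, upgrades the gradient bound to the numerical bound $\|A^*A\| \le C$.

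Now I would introduce the matrix $W_{\Psi_L} = \bigl(\bra{r} V_i\bigr)_{i,r} \in \mb M_{m \times n}(\mb M_{1\times n})$ and check by a one-line computation that $\tau_{\Psi_L} = W_{\Psi_L}^* W_{\Psi_L}$ (this is just Kraus-to-Choi). The linear relation $V_i = \sum_j \alpha_{ij} \wt V_j$ immediately gives $W_{\Psi_L} = A \, W_{\Psi_{\widehat{L'}}}$, hence
\[
\tau_{\Psi_L} = W_{\Psi_{\widehat{L'}}}^* (A^*A) W_{\Psi_{\widehat{L'}}} \le \|A^*A\|\,\tau_{\Psi_{\widehat{L'}}} \le C\,\tau_{\Psi_{L'}},
\]
since discarding jump operators only decreases the Choi matrix. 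Equivalence of Choi comparison with cp-order for jump maps then gives $\Psi_L \le_{cp} C\Psi_{L'}$.

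The reverse direction runs in parallel: if $\tau_{\Psi_L} \le C\tau_{\Psi_{L'}}$, the classical Radon--Nikodym theorem for completely positive maps (Arveson) guarantees that the Kraus operators of $\Psi_L$ lie in the span of those of $\Psi_{L'}$ with the same matrix $A$ satisfying $\|A^*A\| \le C$, so I run the same factorization backward on $m_L = D_L^* D_L = D_{L'}^*(A^*A \otimes I_n) D_{L'} \le C \, m_{L'}$ and invoke Proposition \ref{comparison} to recover $\Gamma_L \le C \Gamma_{L'}$. The main obstacle I anticipate is the bookkeeping needed to reduce to the square full-rank case and to verify that the same constant $\|A^*A\|$ controls both sides; this is exactly where Corollary \ref{drop} does the heavy lifting, letting me ignore the redundant jump operators of $L'$ on both the gradient-form side and the Choi-matrix side.
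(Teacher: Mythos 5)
Your proposal follows essentially the same route as the paper's proof: standard form plus Theorem \ref{main:comparison} to get the span inclusion and the full-rank matrix $A$, the Corollary \ref{drop} argument to pin down $\|A^*A\| \le C$, and the parallel factorizations $m_L = D_L^*D_L$ and $\tau_{\Psi_L} = W_{\Psi_L}^*W_{\Psi_L}$ to transfer the comparison between the two sides. The reverse direction via the Radon--Nikodym theorem for completely positive maps is exactly the ``classical result'' the paper invokes, so there is nothing substantively different here.
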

\begin{remark}
A simple implication is that the gradient form $\Gamma_L$ uniquely determines the jump map $\Psi_L$. In fact, if $\Gamma_L = \Gamma_{L'}$, then from the above corollary $\Psi_L = \Psi_{L'}$.
\end{remark}

\section{The structure of $I-E_{\mathcal{N}}$}
In this section, we characterize the simple operator $-(I-E_{\mathcal{N}})$ as a Lindblad generator, where $E_{\mathcal{N}}$ is the conditional expectation onto $\mathcal{N}$ and $\mc N \subseteq \mc M = \mc B(\mc H)$ is a sub-algebra. In fact, since $E_{\mc N}$ is a unital completely positive map, its Kraus representation is given by 
$$E_{\mc N}(x) = \sum_{j}V_j^*xV_j, \sum_{j}V_j^*V_j = I_n.$$
Therefore, $-(I-E_{\mc N})$ is a Lindblad generator. It is well-known that if $\mc N = \bigoplus_{k=1}^l M_{n_k} \otimes 1_{r_k} \subseteq \mc B(\mc H)$, where the Hilbert space has the decomposition $\mc H = \bigoplus_{k=1}^{l} \mc H_k \otimes \mc K_k$, the conditional expectation $E_{\mc N}$ is given by 
\begin{equation}
E_{\mc N}(x) = \bigoplus_{k=1}^l \Tr_{\mc K_k}(P_k xP_k(1_{n_k}\otimes \tau_k))\otimes 1_{r_k},
\end{equation}
where $P_k$ is the projection of $\mc H$ onto $\mc H_k\otimes \mc K_k$ and $\tau_k$ is a family of density operators acting on $\mc K_k$ determining the conditional expectation. In particular, if $\tau_k = \frac{1}{dim \mc K_k}1_{r_k} = \frac{1}{r_k}1_{r_k}$, we denote our conditional expectation as $E_{\mc N,\tau}$, the \textit{trace preserving} conditional expectation.
We will show that as a Lindblad generator, the Kraus operators of $-(I-E_{\mc N,\tau})$ are maximal in some sense. The following simple case is a strong indication.
\begin{lemma}\label{I-E:primitive}
Suppose $\{I_n, V_j;j \in \mathcal{J}\}$ is an orthonormal basis of $L^2(\mathcal{M},\tau)$, then 
$$\sum_{i \in \mathcal{I}} \big[V_i^*xV_i - \frac{1}{2}(V_i^*V_ix + xV_i^*V_i) \big]= -n^2( I-E_{\tau}),$$
where $E_{\tau}(x) = \tau(x)I_n$.
\end{lemma}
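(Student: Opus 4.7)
The whole lemma boils down to a basis-independent \emph{resolution of identity} for the bilinear expression $x \mapsto \sum_\alpha F_\alpha^* x F_\alpha$, summed over an orthonormal basis of $L^2(\mc M,\tau)$. My plan is to first establish this resolution of identity, then apply it to the orthonormal basis $\{I_n\}\cup\{V_j\}_{j\in\mc J}$ to evaluate each of the three terms in the Lindbladian separately.

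\textbf{Step 1: Resolution of identity.} I claim that for any orthonormal basis $\{F_\alpha\}_{\alpha\in\mc I_n}$ of $L^2(\mc M,\tau)$ we have
\begin{equation}\label{plan:res}
\sum_{\alpha\in\mc I_n} F_\alpha^* x F_\alpha \;=\; \mathrm{Tr}(x)\, I_n \cdot \tfrac{1}{1} \cdot n \;=\; n\,\mathrm{Tr}(x)\, I_n \;=\; n^2 E_\tau(x)\qquad\text{for all } x\in\mc M.
\end{equation}
Basis-independence is immediate: if $G_\beta = \sum_\alpha U_{\beta\alpha} F_\alpha$ with $U$ unitary, unitarity collapses the cross-terms in $\sum_\beta G_\beta^* x G_\beta$ back to $\sum_\alpha F_\alpha^* x F_\alpha$. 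It therefore suffices to verify \eqref{plan:res} on the standard basis $F_{(i,j)}=\sqrt{n}\,e_{ij}$ (which is orthonormal because $\tau(n\,e_{ji}e_{kl})=\delta_{ik}\delta_{jl}$). A direct calculation with $x=\sum_{k,l}x_{kl}e_{kl}$ gives $n\,e_{ji}\,e_{kl}\,e_{ij}=n\,\delta_{ik}\delta_{li}\,e_{jj}$, so $\sum_{i,j} n\,e_{ji}x e_{ij}= n\sum_{i,j} x_{ii}\,e_{jj} = n\,\mathrm{Tr}(x)\,I_n$, confirming \eqref{plan:res}.

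\textbf{Step 2: Apply to the given basis and assemble.} Applying \eqref{plan:res} to the orthonormal basis $\{I_n\}\cup\{V_j\}_{j\in\mc J}$ yields
\begin{equation*}
x + \sum_{j\in\mc J} V_j^* x V_j \;=\; n^2 E_\tau(x),
\qquad\text{hence}\qquad
\sum_{j\in\mc J} V_j^* x V_j \;=\; n^2 E_\tau(x) - x.
\end{equation*}
Specializing $x = I_n$ (and using $E_\tau(I_n)=I_n$) immediately gives the auxiliary identity $\sum_{j\in\mc J} V_j^* V_j = (n^2-1)I_n$, so $V_j^*V_j$ acts as a scalar multiple of the identity in the sum. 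Plugging these two facts into the Lindblad expression,
\begin{equation*}
\sum_{j\in\mc J}\!\Big[V_j^* x V_j - \tfrac{1}{2}(V_j^*V_j x + x V_j^*V_j)\Big]
= \big(n^2 E_\tau(x)-x\big) - (n^2-1)x = -n^2\big(x-E_\tau(x)\big) = -n^2(I-E_\tau)(x),
\end{equation*}
which is the desired formula.

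\textbf{Main obstacle.} There is no serious obstacle: the argument is essentially a one-line computation once \eqref{plan:res} is in hand. The only subtle point is the basis-independence claim, but that follows from an elementary unitary change of basis, and the value on the standard matrix-unit basis can be read off by hand. The cleanest presentation is therefore to state \eqref{plan:res} as a preliminary identity and then substitute twice.
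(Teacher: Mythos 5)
Your proof is correct and follows essentially the same route as the paper's: both verify the key identity on the matrix-unit basis $\{\sqrt{n}\,e_{ij}\}$ and transfer it to the given basis via a unitary change of orthonormal basis. The only difference is organizational --- you isolate the resolution of identity $\sum_\alpha F_\alpha^* x F_\alpha = n^2 E_\tau(x)$ and deduce $\sum_j V_j^*V_j = (n^2-1)I_n$ by setting $x=I_n$, whereas the paper pushes the full three-term Lindblad expression through the change of basis; both are valid.
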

\begin{proof}
Note that $\{\sqrt{n}e_{ij}\}$ is an orthonormal basis of $L^2(\mathcal{M},\tau)$, where $e_{ij} = \ket{i}\bra{j}$ is the matrix unit. We have by direct calculation,
\begin{align*}
\widetilde{L}x & := n \sum_{i,j} \big[e_{ij}^*xe_{ij} - \frac{1}{2}(xe_{ij}^*e_{ij} + e_{ij}^*e_{ij} x)\big] = -n^2(I-E_{\tau})(x).
\end{align*}
Now we denote $\{E_{\alpha}\}_{\alpha \in \mathcal{I}_n} = \{\sqrt{n} e_{ij}\}_{1\le i\le n,1\le j \le n},\ \{V_{\alpha}\}_{\alpha \in \mathcal{I}_n} = \{I_n,V_j;j\in \mathcal{J}\}$. Since they are both orthonormal basis of $L_2(\mc M,\tau)$, there exists a unitary $(U_{\alpha,\beta})_{\alpha,\beta \in \mathcal{I}_n}$, such that 
$$V_{\alpha} = \sum_{\beta \in \mathcal{I}_n}U_{\alpha,\beta}E_{\beta}$$
Thus 
\begin{align*}
& \sum_{\alpha \in \mathcal{I}_n} \big[V_{\alpha}^*xV_{\alpha} - \frac{1}{2}(V_{\alpha}^*V_{\alpha}x + x V_{\alpha}^*V_{\alpha})\big] \\
& = \sum_{\alpha,\beta,\gamma \in \mathcal{I}_n} \big[\overline{U_{\alpha,\beta}}U_{\alpha,\gamma}E_{\beta}^*xE_{\gamma} - \frac{1}{2}(\overline{U_{\alpha,\beta}}U_{\alpha,\gamma}E_{\beta}^*E_{\gamma}x + x \overline{U_{\alpha,\beta}}U_{\alpha,\gamma}E_{\beta}^*xE_{\gamma}\big] \\
& = \sum_{\beta, \gamma \in \mathcal{I}_n} \delta_{\beta,\gamma}\big[E_{\beta}^*xE_{\gamma} - \frac{1}{2}(xE_{\beta}^*E_{\gamma} + E_{\beta}^*E_{\gamma}x)\big] \\
& = n \sum_{i,j} \big[e_{ij}^*xe_{ij} - \frac{1}{2}(xe_{ij}^*e_{ij} + e_{ij}^*e_{ij} x)\big] \\
& = -n^2(I-E_{\tau})(x).
\end{align*}
\end{proof}
The above lemma shows that for any Lindblad generator $\mc L$, we have $\Gamma_{\mc L} \le C \Gamma_{-(I-E_{\tau})}$ for some $C>0$, by the comparison principle because the Kraus operators of $-(I-E_{\tau})$ consist of the whole orthonormal basis. If a Lindblad generator with $\sigma$-detailed balance has fixed point algegra $\mc N$, we can show a finer result 
\[
\Gamma_{\mc L} \le C\Gamma_{-(I - E_{\mc N,\tau})}, C>0.\]
To prove the above result, we need some structural analysis of the algebra $\mc N$:
\begin{prop} \label{I-E:1}
There exist a selfadjoint orthonormal set $\{I_n, V_j; j\in \mathcal{J}\}$ which spans $\mc N'$(commutator of $\mc N$), such that
$$\sum_{j \in \mathcal{J}}\big[V_jXV_j - \frac{1}{2}(V_j^2X + XV_j^2) \big]= -( I-E_{\mathcal{N},\tau})(X), X \in \mb M_n.$$
\end{prop}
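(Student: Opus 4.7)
The plan is to bootstrap Lemma \ref{I-E:primitive} block by block along the structure of $\mathcal{N}$. By the classification of finite-dimensional $\ast$-subalgebras, write $\mathcal{N} = \bigoplus_{k=1}^{l}(M_{n_k} \otimes 1_{r_k})$ acting on $\mathcal{H} = \bigoplus_k (\mathcal{H}_k \otimes \mathcal{K}_k)$ with $\dim \mathcal{H}_k = n_k$ and $\dim \mathcal{K}_k = r_k$. Then $\mathcal{N}' = \bigoplus_k (1_{n_k} \otimes M_{r_k})$ has dimension $\sum_k r_k^2$; let $P_k$ denote the central projection onto the $k$-th summand. I split the desired set into two families: an ``in-block'' family, one per $k$, sitting in $1_{n_k} \otimes M_{r_k}$ modulo $\mathbb{C} P_k$; and a ``central'' family of real linear combinations of the $P_k$'s that are orthogonal to $1_n$.

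For each $k$, apply Lemma \ref{I-E:primitive} to $M_{r_k}$ to produce a self-adjoint $\tau_{r_k}$-orthonormal basis $\{I_{r_k}, W_\gamma^{(k)}\}_{\gamma \in \mathcal{J}_k}$ with
\[
\sum_{\gamma} \bigl[W_\gamma^{(k)} y W_\gamma^{(k)} - \tfrac{1}{2}\bigl(W_\gamma^{(k) 2} y + y W_\gamma^{(k) 2}\bigr)\bigr] = -r_k^{2}\,(I - E_{\tau_{r_k}})(y), \qquad y \in M_{r_k},
\]
and lift to $\mathcal{H}$ by $V_\gamma^{(k)} := \tfrac{1}{r_k}(1_{n_k} \otimes W_\gamma^{(k)})$, viewed as zero off the $k$-block. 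Writing $L_V(X) := VXV - \tfrac{1}{2}(V^2X + XV^2)$ for self-adjoint $V$, the tensor identity $L_{1_{n_k} \otimes W_\gamma^{(k)}}(A \otimes B) = A \otimes L_{W_\gamma^{(k)}}(B)$ combined with $E_{\mathcal{N},\tau}(A \otimes B) = \tau_{r_k}(B)\,A \otimes 1_{r_k}$ forces $\sum_\gamma L_{V_\gamma^{(k)}}|_{P_k \mathcal{M} P_k} = -(I - E_{\mathcal{N},\tau})|_{P_k \mathcal{M} P_k}$, which settles the diagonal blocks.

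Next, adjoin self-adjoint central elements $Q_\alpha = \sum_k \beta_k^\alpha P_k$ ($\alpha = 1, \ldots, l-1$) chosen so that $\{1_n, Q_\alpha\}$ is a basis of $\mathrm{span}\{P_1, \ldots, P_l\}$. A direct computation gives $L_{Q_\alpha}(P_k X P_{k'}) = -\tfrac{1}{2}(\beta_k^\alpha - \beta_{k'}^\alpha)^2 P_k X P_{k'}$, and the combinatorial identity $\sum_\gamma (W_\gamma^{(k)})^2 = (r_k^2-1)\,I_{r_k}$ (valid for any self-adjoint $\tau_{r_k}$-orthonormal basis of the traceless subspace, by orthogonal invariance of the sum together with a trace computation) yields an off-diagonal contribution of $-\tfrac{r_k^2-1}{2 r_k^2}\,P_k X P_{k'}$ from the $V_\gamma^{(k)}$'s (and symmetrically for $V_\gamma^{(k')}$). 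Matching the total to $-P_k X P_{k'}$ then reduces to the pairwise constraint
\[
\sum_\alpha (\beta_k^\alpha - \beta_{k'}^\alpha)^2 \;=\; \tfrac{1}{r_k^2} + \tfrac{1}{r_{k'}^2}, \qquad k \ne k',
\]
which is realizable, for example, by letting $\beta_k^\alpha$ record the coordinates of $\tfrac{1}{r_k}\vec e_k \in \mathbb{R}^l$ in any fixed orthonormal basis of the hyperplane $\vec 1^{\perp} \subset \mathbb{R}^l$. Self-adjointness is immediate from the construction, and the dimension count $1 + (l-1) + \sum_k (r_k^2 - 1) = \sum_k r_k^2 = \dim \mathcal{N}'$ confirms that $\{1_n, Q_\alpha, V_\gamma^{(k)}\}$ spans $\mathcal{N}'$.

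The main obstacle I anticipate is this off-diagonal reconciliation: the diagonal-block identity follows almost mechanically from Lemma \ref{I-E:primitive}, but the off-diagonal matching requires the correct normalization $c_k = \tfrac{1}{r_k}$ of the in-block generators, the identity for $\sum_\gamma (W_\gamma^{(k)})^2$, and solvability of the Euclidean distance constraint by the central family $\{Q_\alpha\}$, all of which must conspire to make the Lindblad sum equal $-(I - E_{\mathcal{N},\tau})$ on every off-diagonal block. That is where the real content of the proposition sits.
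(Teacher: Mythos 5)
Your overall strategy is the same as the paper's: reduce to $\mathcal{N}=\bigoplus_{k}M_{n_k}\otimes 1_{r_k}$ and bootstrap Lemma \ref{I-E:primitive} block by block. The genuine difference is the treatment of the off-diagonal blocks $P_kXP_{k'}$. The paper places a \emph{full} orthonormal basis of $M_{r_k}$ (including the multiple of $1_{r_k}$) into each block, scaled by $r_1/r_k$; then $\sum_i \widehat{V_{i,r_k}}^{\,2}=r_1^2P_k$, the off-diagonal blocks receive $-\tfrac12 r_1^2-\tfrac12 r_1^2=-r_1^2$ with no further tuning, and the output is $-r_1^2(I-E_{\mathcal{N},\tau})$. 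You instead normalize the traceless in-block generators by $1/r_k$, which makes every diagonal block come out with constant exactly $1$ (cleaner than the paper on this point), but you must then repair the off-diagonal deficit with central elements $Q_\alpha=\sum_k\beta_k^\alpha P_k$ subject to $\sum_\alpha(\beta_k^\alpha-\beta_{k'}^\alpha)^2=r_k^{-2}+r_{k'}^{-2}$. Your computations of $L_{Q_\alpha}$ on off-diagonal blocks and of $\sum_\gamma (W_\gamma^{(k)})^2=(r_k^2-1)I_{r_k}$ are both correct, and the distance constraint is indeed realizable, so the architecture works; it simply trades the paper's overall factor $r_1^2$ for an extra embedding problem.

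There is one concrete error: your explicit witness for the $\beta_k^\alpha$ fails when the $r_k$ are not all equal. Taking coordinates of $\tfrac{1}{r_k}\vec e_k$ in an orthonormal basis of the hyperplane $\vec 1^{\perp}$ is an orthogonal projection, and it contracts the relevant differences: $\|\Pi(\tfrac{1}{r_k}\vec e_k-\tfrac{1}{r_{k'}}\vec e_{k'})\|^2=r_k^{-2}+r_{k'}^{-2}-\tfrac{1}{l}(r_k^{-1}-r_{k'}^{-1})^2$, which matches the required value only for $r_k=r_{k'}$. The repair is easy: the $l$ points $\tfrac{1}{r_k}\vec e_k$ span an $(l-1)$-dimensional affine subspace, so translate by $-\tfrac{1}{r_1}\vec e_1$ and take coordinates in an orthonormal basis of the span of the differences; this is an isometry on the configuration and also gives the affine independence needed for $\{1_n,Q_\alpha\}$ to span the center. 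Finally, note that your $V_\gamma^{(k)}$ have $\tau$-norm $\sqrt{n_k/(nr_k)}\neq 1$, so your set is orthogonal rather than orthonormal; this tension is already present in the statement itself (for $\mathcal{N}=\mathbb{C}I_n$, orthonormality forces the constant $n^2$ by Lemma \ref{I-E:primitive}), the paper's own construction has the same feature, and only the span together with positivity of the constant is used in Corollary \ref{comparison: upper bound}.
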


\begin{proof}
Without loss of generality, we can assume the fixed point algebra $$\mathcal{N} = \bigoplus_{k=1}^m M_{n_k} \otimes 1_{r_k}\subseteq \mathcal{M} =\mb M_n$$
Indeed, from the well-known characterization of finite dimensional $C^*$-algebra, there exists a unitary $U\in \mb M_n$, such that 
\begin{align*}
U^*\mathcal{N}U = \bigoplus_{k=1}^m M_{n_k} \otimes 1_{r_k}
\end{align*}
Then from the uniqueness of conditional expectation determined by states, we have 
\begin{align*}
E_{U^*\mc NU, \tau}(X) = U^*E_{\mc N,\tau}(UXU^*)U, X\in \mb M_n.
\end{align*}
If we can show that $-(I-E_{U^*\mc NU, \tau}) = \sum_{j} \mc L_{V_j}$, where $\{I_n, V_j\}$ is a selfadjoint orthonormal set spanning $\mc (U^*NU)'$, then for any $X \in\mb M_n$,
\begin{align*}
-(I - E_{\mc N,\tau})(X) & = - U(I-E_{U^*\mc NU, \tau}(U^*XU)) U^* \\
& = U\big(\sum_{j \in \mathcal{J}}\big[V_j(U^*XU)V_j - \frac{1}{2}(V_j^2U^*XU + U^*XUV_j^2) \big] \big)U^* \\
& = \sum_{j \in \mc J} \mc L_{\wt V_j},
\end{align*}
where $\wt V_j = UV_j U^*$. Since it is easy to see that $\{I_n, \wt V_j; j\in \mc J\}$ form a selfadjoint orthonormal set which spans $\mc N'$, we only need to show the case when $\mathcal{N}$ has the form $\bigoplus_{k=1}^m M_{n_k} \otimes 1_{r_k}\subseteq \mathcal{M} = M_n$.

\noindent \textbf{Step I:} First suppose $\mathcal{N} = M_{n_1} \otimes 1_{r_1}$. Then  we have $\mc N' = 1_{n_1} \otimes M_{r_1}$. Any orthonormal basis of $\mc N'$ is given by $\{1_{n_1} \otimes V_{i,r_1}\}_{1\le i \le r_1^2}$, where $\{V_{i,r_1}\}_{1 \le i \le r_1^2}\ \text{is\ an\ orthonormal\ basis\ of\ }\mb M_{r_1}$, which means $$\Tr(V_{i,r_1}^*V_{j,r_1}) = r_1 \delta_{i,j}$$ For any $X \in \mb M_n$, it can be written as the block form: $$X = \sum_{1 \le u,v \le n_1} e_{uv} \otimes X_{uv}= \begin{pmatrix}
X_{11} & X_{12} & \cdots & X_{1n_1} \\
X_{21} & X_{22} & \cdots & X_{2n_1} \\
\cdots & \cdots & \cdots & \cdots \\
X_{n_11} & X_{n_12} & \cdots & X_{n_1n_1}
\end{pmatrix}$$
where each $X_{uv} \in \mb M_{r_1}$ and $e_{uv}$ is the matrix unit of $\mb M_{n_1}$. We denote 
\begin{equation}
\mc L_{V_{i,r_1}}(X_{uv}) = V_{i,r_1}^*X_{uv}V_{i,r_1} - \frac{1}{2}(V_{i,r_1}^*V_{i,r_1}X_{uv} + X_{uv} V_{i,r_1}^*V_{i,r_1})
\end{equation} 
Then by calculation, we have 
\begin{align*}
\sum_{i} \mc L_{1_{n_1} \otimes V_{i,r_1}}(X)& = \sum_{1\le u,v \le n_1} e_{uv} \otimes \mc L_{V_{i,r_1}}(X_{uv}) = \sum_{1\le u,v \le n_1} e_{uv} \otimes \big(-r_1^2(X_{uv} - \frac{1}{r_1}\Tr(X_{uv}) 1_{r_1}) \big) \\
& = -r_1^2 \big( \sum_{1\le u,v \le n_1} e_{uv} \otimes X_{uv} -  \sum_{1\le u,v \le n_1} \frac{1}{r_1}\Tr(X_{uv}) e_{uv} \otimes 1_{r_1}\big)\\
& = -r_1^2(X - \frac{1}{r_1}\Tr_{\mc K_1}(X) \otimes 1_{r_1}) = -r_1^2 (I-E_{\mc N,\tau})(X).
\end{align*}
For the second equality, we used Lemma \ref{I-E:1}, and for the last equality, we used the form of conditional expectation when $\mc N = \mb M_{n_1} \otimes 1_{r_j}$, $\mc H = \mc H_1 \otimes \mc K_1$.

\noindent \textbf{Step II}: Suppose $\mathcal{N} = \bigoplus_{k=1}^l \mb M_{n_k} \otimes 1_{r_k}\subseteq \mathcal{M} = \mb M_n$, $l\ge2$. In this case, the decomposition of the underlying Hilbert space is given by
$\mc H = \bigoplus_{k=1}^l \mc H_k \otimes \mc K_k.$
The commutator is given by $\mathcal{N}' = \bigoplus_{k=1}^k 1_{n_k} \otimes M_{r_k}$. Any element $A \in \mathcal{N}'$ has the following block diagonal form 
$$A = diag(1_{n_1}\otimes A_1, \cdots, 1_{n_k}\otimes A_k, \cdots, 1_{n_l}\otimes A_{l}),$$
where $A_{r_k} \in \mb M_{r_k}$. For each $1\le k\le l$, choose a selfadjoint orthonormal basis $\{V_{i,r_k}\}_{1\le i \le r_k^2}$ of $\mb M_{r_k}$, i.e., they are orthonormal and we have 
\begin{equation}
\Tr(V_{i,r_k}^*V_{i,r_k}) = r_k
\end{equation}
Now we define 
\begin{equation}
\begin{aligned}
& \widehat{V_{i,r_1}}: = diag(1_{n_1}\otimes V_{i,r_1},\cdots, 0, \cdots, 0) \\
& \widehat{V_{i,r_k}}: = diag(0,\cdots,0,\cdots, 1_{n_k}\otimes \frac{r_1}{r_k}V_{i,r_k}, \cdots, 0, \cdots, 0),\ 2 \le k \le m
\end{aligned}
\end{equation}
It is clear that $\{\widehat{V_{i,r_j}}\}_{1\le i \le r_k^2, 1 \le k \le l}$ is a selfadjoint orthonormal set whose linear span is $\mathcal{N}'$. We claim that 
\begin{equation}
\sum_{1 \le k \le l}\sum_{1\le i \le r_k^2}\mc L_{\widehat{V_{i,r_k}}}(X) = -r_1^2(I-E_{\mathcal{N},\tau})(X), X \in \mb M_n.
\end{equation}
Note that for any $X \in \mb M_n$, it can be written as the following block form: 
\begin{align*}
X = \begin{pmatrix}
X_{11} & \cdots & X_{1l} \\
\vdots & \vdots & \vdots \\
X_{l1} & \cdots & X_{ll}
\end{pmatrix}
, X_{uv} \in \mb M_{n_ur_u \times n_v r_v},\ \forall u,v = 1,\cdots, l.
\end{align*}
Then we have 
\begin{align*}
& \mc L_{\widehat{V_{i,r_k}}}(X) = \widehat{V_{i,r_j}} X \widehat{V_{i,r_j}} - \frac{1}{2}(\widehat{V_{i,r_j}}^2 X + X\widehat{V_{i,r_j}}^2) \\
& = \frac{r_1^2}{r_k^2} diag(0,\cdots,0, (1_{n_k}\otimes V_{i,r_k})X_{kk}(1_{n_k}\otimes V_{i,r_k}), 0, \cdots, 0) \\
& - \frac{r_1^2}{2r_k^2}\begin{pmatrix}
0 & \cdots & 0\\
\vdots & \vdots & \vdots \\
(1_{n_k}\otimes V_{i,r_k})^2X_{k1} & \cdots & (1_{n_k}\otimes V_{i,r_k})^2X_{kl} \\
\vdots & \vdots & \vdots \\
0 & \cdots & 0
\end{pmatrix} 
 - \frac{r_1^2}{2r_k^2}\begin{pmatrix}
0 & \cdots & X_{1k}(1_{n_k}\otimes V_{i,r_k})^2 & \cdots & 0\\
\cdots & \cdots & \cdots \\
0 & \cdots & X_{kk}(1_{n_k}\otimes V_{i,r_k})^2 & \cdots & 0\\
\cdots & \cdots & \cdots \\
0 & \cdots & X_{lk}(1_{n_k}\otimes V_{i,r_k})^2 & \cdots & 0\\
\end{pmatrix}.
\end{align*}
Then using the calculation in \textbf{Step I}, we have 
\begin{align*}
\sum_{1\le i\le r_k^2} (1_{n_k}\otimes V_{i,r_k})^2 = r_k^2 (1_{n_k}\otimes 1_{r_k}),
\end{align*}
and 
\begin{align*}
\sum_{1\le i\le r_k^2} (1_{n_k}\otimes V_{i,r_k})X_{kk}(1_{n_k}\otimes V_{i,r_k}) = r_k^2 \frac{1}{r_k}\Tr_{\mc K_k}(X_{kk}) \otimes 1_{r_k}.
\end{align*}
Therefore,
\begin{align*}
& \sum_{1\le i\le r_k^2}\mc L_{\widehat{V_{i,r_k}}}(X) = -r_1^2 \begin{pmatrix}
0 & \cdots & \frac{1}{2}X_{1k} & \cdots & 0\\
\cdots & \cdots & \cdots \\
\frac{1}{2}X_{k1} & \cdots & (X_{kk} - \frac{1}{r_k}\Tr_{\mc K_k}(X_{kk}) \otimes 1_{r_k}) & \cdots & \frac{1}{2}X_{kl}\\
\cdots & \cdots & \cdots \\
0 & \cdots & \frac{1}{2}X_{lk} & \cdots & 0\\
\end{pmatrix}
\end{align*}
Then we sum $k$ from $1$ to $l$, we have 
\begin{equation*}
\begin{aligned}
& \sum_{1 \le k \le l}\sum_{1\le i \le r_k^2}\mc L_{\widehat{V_{i,r_k}}}(X) \\
& = -r_1^2 \big(X - diag(\frac{1}{r_1}\Tr_{\mc K_1}(X_{11})\otimes 1_{r_1},\cdots, \frac{1}{r_k}\Tr_{\mc K_k}(X_{kk})\otimes 1_{r_k}, \cdots, \frac{1}{r_l}\Tr_{\mc K_l}(X_{ll})\otimes 1_{r_l}\big) \\
& = -r_1^2 (I-E_{\mathcal{N},\tau})(X).
\end{aligned}
\end{equation*}
\end{proof}

\begin{cor} \label{comparison: upper bound}
Suppose $\mc L$ is a Lindblad generator with $\sigma-$detailed balance, the fixed point algebra of which is given by $\mathcal{N}$. Then there exists a constant $C>0$, such that 
\begin{align*}
\Gamma_{\mc L} \le C \Gamma_{-(I-E_{\mathcal{N},\tau})}.
\end{align*}
\end{cor}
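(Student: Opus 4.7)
The plan is to apply Theorem \ref{main:comparison} after identifying the traceless jump operators of both generators in standard form and verifying a span inclusion. Write $\mc L$ in its standard form \eqref{standard form}, with traceless orthonormal jump operators $\{V_j\}_{j \in \mc J}$ and Hamiltonian part $i[H,\cdot]$. By Proposition \ref{I-E:1}, $-(I - E_{\mc N,\tau})$ admits a standard form whose traceless orthonormal jump operators $\{W_k\}_k$, together with $I_n$, form an orthonormal basis of $\mc N'$; in particular $\mathrm{span}\{W_k\}_k = \mc N' \cap \{I_n\}^{\perp}$.

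By Theorem \ref{main:comparison}, the desired inequality $\Gamma_{\mc L} \le C\,\Gamma_{-(I - E_{\mc N,\tau})}$ reduces to proving the inclusion $\mathrm{span}\{V_j\}_{j \in \mc J} \subseteq \mathrm{span}\{W_k\}_k$, and since each $V_j$ is already traceless this amounts to the single claim $V_j \in \mc N'$ for every $j$. This is the main step I expect to carry the argument. The plan is to exploit GNS detailed balance as follows. Since $\mc L$ is GNS-symmetric and $\mc L(I_n)=0$, the faithful state $\sigma$ is invariant, i.e.\ $\mc L_*(\sigma) = 0$ (because $\Tr(\sigma\,\mc L(y)) = \langle I_n,\mc L(y)\rangle_1 = \langle \mc L(I_n),y\rangle_1 = 0$ for all $y$). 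For $x \in \mc N$ one has $\mc L(x) = \mc L(x^*) = 0$, so
\[
\Tr\bigl(\sigma\,\Gamma_{\mc L}(x,x)\bigr) = \Tr\bigl(\sigma\,\mc L(x^*x)\bigr) - \Tr\bigl(\sigma\, x^* \mc L(x)\bigr) - \Tr\bigl(\sigma\, \mc L(x^*) x\bigr) = 0,
\]
the first term vanishing by $\mc L_*(\sigma) = 0$. Since $\Gamma_{\mc L}(x,x) = \sum_j [V_j,x]^*[V_j,x] \ge 0$ and $\sigma > 0$, this forces $\Gamma_{\mc L}(x,x) = 0$, hence $[V_j,x] = 0$ for every $j$ and every $x \in \mc N$; equivalently $V_j \in \mc N'$, as required.

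With the span inclusion established, Theorem \ref{main:comparison} delivers a finite $C > 0$ with $\Gamma_{\mc L} \le C\,\Gamma_{-(I - E_{\mc N,\tau})}$. An explicit constant can be read off by expanding $V_j = \sum_k \alpha_{jk} W_k$ and setting $C = \|A^*A\|$ with $A = (\alpha_{jk})$, exactly as in the computation \eqref{D matrix:2} appearing in the proof of Theorem \ref{main:comparison}. The Hamiltonian part $i[H,\cdot]$ presents no obstacle, since it contributes $0$ to $\Gamma_{\mc L}$ and Theorem \ref{main:comparison} is stated purely in terms of jump operators.
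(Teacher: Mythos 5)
Your proposal is correct and follows essentially the same route as the paper: write both generators in standard form, invoke Proposition \ref{I-E:1} to identify the jump operators of $-(I-E_{\mc N,\tau})$ as a traceless orthonormal basis of $\mc N'$, and conclude via the span inclusion and Theorem \ref{main:comparison}. The only difference is that you supply a short proof of the key fact $V_j\in\mc N'$ (via invariance of $\sigma$ and positivity of $\Gamma_{\mc L}$ on the fixed points), which the paper simply asserts; that is a welcome filling-in of a detail rather than a different argument.
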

\begin{proof}
We can write down the standard form of $\mc L$:
\begin{align*}
\mc L(x) = \sum_{j=1}^m \big[V_j^*xV_j - \frac{1}{2}(V_j^*V_jx + x V_j^*V_j)\big],
\end{align*}
where $V_j \in \mc N'$. By Proposition \ref{I-E:1}, there exists an orthonormal set $\{I_n, \wt V_j;j \in \mc J\}$ such that 
$$-(I-E_{\mathcal{N},\tau}) = \sum_{j\in \mc J} \big[\wt V_j^*x\wt V_j - \frac{1}{2}(\wt V_j^*\wt V_jx + x\wt V_j^*\wt V_j)\big].$$
Moreover, $\{I_n,\wt V_j;j\in \mc J\}$ spans $\mc N'$. Therefore, we have 
\begin{align*}
span\{V_j: 1\le j \le m\} \subseteq span\{I_n,\wt V_j;j\in \mc J\}.
\end{align*}
Then from our comparison theorem, we know that there exists $C>0$, such that 
$\Gamma_{\mc L} \le C \Gamma_{-(I-E_{\mathcal{N},\tau})}.$
\end{proof}
The following corollary is a simple proof of \cite[Theorem 5.4]{GJL20}:
\begin{cor}
The set of Lindblad generators with $\sigma-$detailed balance which satisfy $\Gamma \mathcal{E}$ is dense with $\|\cdot\|_{2\to 2}$.
\end{cor}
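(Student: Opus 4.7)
The plan is to prove density by a direct additive perturbation. Given any Lindblad generator $\mathcal{L}$ with $\sigma$-detailed balance whose fixed point algebra is $\mathcal{N}$, I propose defining, for each $\varepsilon > 0$,
\[
\mathcal{L}_\varepsilon \;:=\; \mathcal{L} \;-\; \varepsilon\bigl(I - E_{\mathcal{N},\sigma}\bigr),
\]
where $E_{\mathcal{N},\sigma}:\mathcal{M}\to\mathcal{N}$ is the conditional expectation preserving the invariant state $\sigma$. Since $E_{\mathcal{N},\sigma}$ is a unital completely positive map that is GNS-self-adjoint with respect to $\sigma$, the simple Poisson-type generator $-(I - E_{\mathcal{N},\sigma})$ is itself Lindbladian with $\sigma$-detailed balance (its Kraus decomposition coming from the one for the cp map $E_{\mathcal{N},\sigma}$, via the same construction as in Proposition \ref{I-E:1}). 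Consequently $\mathcal{L}_\varepsilon$ is again a Lindblad generator with $\sigma$-detailed balance.

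The first nontrivial step is to verify that $\mathcal{L}_\varepsilon$ retains $\mathcal{N}$ as its fixed point algebra. Both $-\mathcal{L}$ and $I - E_{\mathcal{N},\sigma}$ are positive semidefinite with respect to the GNS inner product $\langle\cdot,\cdot\rangle_1$, and each has kernel exactly $\mathcal{N}$ (for the first, by the definition of $\mathcal{N}$ as the fixed point algebra; for the second, because $E_{\mathcal{N},\sigma}(x)=x \iff x\in\mathcal{N}$). Since a sum of two positive semidefinite operators has kernel equal to the intersection of their kernels, $\ker\mathcal{L}_\varepsilon = \mathcal{N}$.

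Next, bilinearity of the gradient form in the generator yields
\[
\Gamma_{\mathcal{L}_\varepsilon}(x,x) \;=\; \Gamma_\mathcal{L}(x,x) \;+\; \varepsilon\,\Gamma_{-(I-E_{\mathcal{N},\sigma})}(x,x) \;\ge\; \varepsilon\,\Gamma_{-(I-E_{\mathcal{N},\sigma})}(x,x),
\]
and the inequality lifts to the completely positive order by applying the same identity to $\mathrm{id}_{\mathbb{M}_m}\otimes\mathcal{L}_\varepsilon$. Combined with Corollary \ref{comparison: upper bound} (which provides the reverse cp-comparison $\Gamma_{\mathcal{L}_\varepsilon} \le C_\varepsilon \Gamma_{-(I-E_{\mathcal{N},\sigma})}$ up to normalizing block traces), this sandwich is exactly the $\Gamma\mathcal{E}$ estimate with constant of order $1/\varepsilon$.

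Finally, density is immediate from $\|\mathcal{L}_\varepsilon - \mathcal{L}\|_{2\to 2} = \varepsilon\,\|I - E_{\mathcal{N},\sigma}\|_{2\to 2} \to 0$ as $\varepsilon \to 0$. The main obstacle I anticipate is merely matching conventions: one must confirm that the $\Gamma\mathcal{E}$ condition used in \cite{GJL20} is the comparison $\Gamma_{-(I-E_{\mathcal{N},\sigma})} \lesssim \Gamma_\mathcal{L}$ (possibly with $E_{\mathcal{N},\tau}$ instead of $E_{\mathcal{N},\sigma}$), and that the switch between these two conditional expectations costs only a bounded constant depending on the spectrum of $\sigma$ restricted to each central block of $\mathcal{N}$. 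Once the matching is fixed, the additive perturbation above does the job.
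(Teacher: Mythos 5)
Your proof is correct and follows essentially the same strategy as the paper's: both perturb $\mathcal{L}$ additively by $\varepsilon$ times a generator whose jump operators fill out the span to all of $\mathcal{N}'$, and then conclude $\Gamma_{\mathcal{L}_\varepsilon}\ge c(\varepsilon)\,\Gamma_{-(I-E_{\mathcal{N},\tau})}$ together with $\|\mathcal{L}_\varepsilon-\mathcal{L}\|_{2\to 2}\lesssim\varepsilon$. The only cosmetic difference is that you add the whole generator $-(I-E_{\mathcal{N},\sigma})$ and get the lower bound directly from linearity of $L\mapsto\Gamma_L$ and positivity of $\Gamma_{\mathcal{L}}$ (modulo the $E_{\mathcal{N},\sigma}$ versus $E_{\mathcal{N},\tau}$ matching you correctly flag), whereas the paper extends the orthonormal set of jump operators by only the missing traceless directions and invokes Theorem \ref{main:comparison}.
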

\begin{proof}
Similar to the proof above, for any Lindblad generator $\mc L$ with $\sigma-$detailed balance, the standard form is given by
\begin{align*}
\mc L(x) = \sum_{j=1}^m \big[V_j^*xV_j - \frac{1}{2}(V_j^*V_jx + x V_j^*V_j)\big],
\end{align*}
We can extend $\{V_j: 1\le j \le m\}$ to $\{V_j: 1\le j \le m+l\}$ such that $$span\{V_j: 1\le j \le m+l\} = \mc N'.$$
Then define $\wt{\mc L} = \mc L + \varepsilon \sum_{j=m+1}^{m+l}\mc L_{V_j}, \varepsilon>0$. $\wt{\mc L}$ satisfies $\Gamma_{\wt{\mc L}} \ge C\Gamma_{-(I-E_{\mc N,\tau})}$ for some $C>0$ by comparison theorem. Moreover, it satisfies $\|\mc L - \wt{\mc L}\|_{2\to 2} = \varepsilon \| \sum_{j=m+1}^{m+l}\mc L_{V_j}\|_{2\to 2} \lesssim \varepsilon$. 
\end{proof}

\section{Order relation from norm estimates}
In this section, we obtain order relation of two Lindblad generators $\mc L,\mc L': L^2(\mc M,\tau) \to L^2(\mc M,\tau)$ from the norm $\|\mc L-\mc L' \|_{2\to 2}$. Our main tool is ordered real vector space with an order unit. First we briefly review the basics of general theory of ordered real vector space, which help us connect the order and the norm. For the whole story, we refer the reader to \cite{Paulsen09}
\subsection{Ordered real vector space}
\begin{definition}
Suppose $V$ is a real vector space. $\emptyset \neq C \subseteq V$ is called a \textit{cone} if 
\begin{itemize}
\item $a\cdot v \in C$ whenever $a \in [0,\infty)$ and $v \in C$.
\item $v + w \in C$ whenever $v,w\in C$.
\end{itemize}
An ordered vector space is a pair $(V,V^+)$ consisting of a real vector space $V$ and a cone $V^+\subseteq V$ satisfying $V^+ \cap -V^+ = \{0\}$. We say that $V^+$ is full if $V= V^+-V^+$.
\end{definition}
We can naturally define a partial ordering $\le$ on $V$ by 
\begin{equation}
v \le w \iff w-v \in V^+.
\end{equation}
In particular, $v \ge 0 $ if and only if $ v \in V^+$. For this reason, we call $V^+$ the \textit{the cone of positive elements} of $V$.
\begin{definition}
Suppose $(V,V^+)$ is an ordered vector space. An element $e$ is called an order unit for $V$, if for any $v \in V$, there exists a constant $C=C(v)>0$, such that $v \le Ce$. For any $v \in V$, define 
\begin{equation}\label{order norm definition}
||v||_{or} = \inf \{r \ge 0: -re \le v \le re\}
\end{equation}
\end{definition}
The following property is crucial, which is proved in \cite[Proposition 2.23]{Paulsen09}:
\begin{prop}\label{ordered norm}
$||\cdot||_{or}$ gives a semi-norm on $V$. Moreover, if $e$ is Archimedean, i.e., when $v\in V$ with $re + v \ge 0$ for all $r>0$, then $v \in V^+$, the semi-norm is actually a norm. 
\end{prop}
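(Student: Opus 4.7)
The plan is to verify the three seminorm axioms from the definition $\|v\|_{or}=\inf\{r\ge 0:-re\le v\le re\}$, and then upgrade to a norm using the Archimedean hypothesis. The definition is meaningful because $e$ is an order unit: for any $v$ we can find $C>0$ with $v\le Ce$ and similarly (applied to $-v$) some $C'>0$ with $-C'e\le v$, so taking $r=\max(C,C')$ shows the set over which we take the infimum is nonempty. Note also that since $-e\le Ce$ for some $C>0$, we get $(C+1)e\in V^+$, and because $V^+$ is a cone this yields $e\in V^+$ (so $re\in V^+$ for all $r\ge 0$).

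First I would check absolute homogeneity. For $\alpha>0$ the set $\{r\ge 0:-re\le v\le re\}$ is carried to $\{r\ge 0:-re\le \alpha v\le re\}$ by the bijection $r\mapsto \alpha r$, which immediately gives $\|\alpha v\|_{or}=\alpha\|v\|_{or}$. For $\alpha<0$ the same argument together with the symmetry of the defining inequality in $v\leftrightarrow -v$ yields $\|\alpha v\|_{or}=|\alpha|\|v\|_{or}$. The case $\alpha=0$ uses $e\in V^+$: for every $r>0$ we have $-re\le 0\le re$, so $\|0\|_{or}=0$. For the triangle inequality, fix $\varepsilon>0$ and pick $r_1<\|v\|_{or}+\varepsilon$ and $r_2<\|w\|_{or}+\varepsilon$ with $-r_ie\le v,w\le r_ie$; adding the two inequalities (legitimate because $V^+$ is closed under addition) produces $-(r_1+r_2)e\le v+w\le (r_1+r_2)e$, so $\|v+w\|_{or}\le r_1+r_2<\|v\|_{or}+\|w\|_{or}+2\varepsilon$, and letting $\varepsilon\to 0$ finishes the subadditivity. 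This establishes that $\|\cdot\|_{or}$ is a seminorm.

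For the norm statement, suppose $e$ is Archimedean and $\|v\|_{or}=0$. Then for every $r>0$ there exists $s\in[0,r]$ with $-se\le v\le se$, and since $se\le re$ (because $(r-s)e\in V^+$) we get $-re\le v\le re$, i.e., both $re+v\in V^+$ and $re-v\in V^+$, for every $r>0$. The Archimedean condition applied to $v$ and to $-v$ then yields $v\in V^+$ and $-v\in V^+$, and the axiom $V^+\cap(-V^+)=\{0\}$ forces $v=0$.

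The proof is essentially bookkeeping with the order; there is no real obstacle. The only subtlety to handle with care is the passage from strict inequality $s<r$ to $-re\le v\le re$ in the Archimedean step, which uses that $(r-s)e\in V^+$ so that the inequality is preserved under enlarging $s$ to $r$. Everything else is a direct translation of the order-unit definition.
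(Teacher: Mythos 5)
Your proof is correct: the nonemptiness of the defining set via the order-unit property (including the observation that $e \in V^+$), the three seminorm axioms, and the Archimedean step yielding $v \in V^+ \cap (-V^+) = \{0\}$ are all handled properly. The paper does not supply its own proof of this proposition --- it simply cites \cite[Proposition 2.23]{Paulsen09} --- and your argument is the standard verification given there, so there is nothing substantive to compare beyond noting the match.
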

The above property enables us to connect the general vector space $V$ to our setting. Recall that if $L$ is a Lindblad generator, we can define its gradient matrix $m_L$ by \eqref{big matrix}. For two Lindblad generators $L,L'$, we have $L \le L'$ if and only if $m_{L} \le m_{L'}$. The first inequality is defined by its gradient form and the second inequality is the usual matrix order.
\begin{example} \label{ex:1}
We define an ordered vector space with full positive cone: 
\begin{equation}
V^+:= \{m_{L} \in \mb M_{n^2}(\mb M_n): L\ \text{is\ a\ Lindblad\ generator}: L^2(\mc M,\tau) \to L^2(\mc M,\tau)\}.
\end{equation}
Define $e = m_{-(I-E_{\tau})}$. $e$ is an order unit by Lemma \ref{I-E:primitive}. Moreover, $e$ is an Archimedean order unit. 
\end{example}
To see why $e$ is Archimedean, suppose $v \in V$, then there exists two Lindblad generators $L, L'$ such that $v = m_{L} - m_{L'}$. By Lemma \ref{I-E:primitive}, we can find an orthonormal set $\{V_j\}_{j \in \mc J}$ such that $\{I_n, V_j;j\in \mc J\}$ spans $\mc M = \mb M_n$ and 
$$-(I-E_{\tau})(x) = \sum_{j \in \mc J} \big[V_j^*xV_j - \frac{1}{2}(V_j^*V_jx + xV_j^*V_j)\big].$$
By Theorem \ref{Main theorem} (3), we can write $L,L'$ as 
\begin{align*}
Lx = i[H_1,x]+ \sum_{i,j\in \mc J} c_{ij}^1 \big[V_i^*xV_j- \frac{1}{2}(V_i^*V_jx + x V_i^*V_j)\big],\\
L'x = i[H_2,x]+ \sum_{i,j\in \mc J} c_{ij}^2 \big[V_i^*xV_j- \frac{1}{2}(V_i^*V_jx + x V_i^*V_j)\big],
\end{align*}
where $(c_{ij}^1),(c_{ij}^2)$ are positive matrices. To prove $e$ is Archimedean, suppose $v + re \ge 0$ for all $r>0$, we have $m_{L-L'-r(I-E_{\tau})} \ge 0$, which means
$$L-L'-r(I-E_{\tau}) = i[H_1-H_2,x] + \sum_{i,j\in \mc J} (c_{ij}^1 - c_{i,j}^2 + r\delta_{ij}) \big[V_i^*xV_j- \frac{1}{2}(V_i^*V_jx + x V_i^*V_j)\big]$$
is a Lindblad generator. Applying Theorem \ref{Main theorem} (3) again, we know that $(c_{ij}^1)-(c_{ij}^2)+r I_{|\mc J|}$ is a positive matrix for all $r >0$. Therefore, $(c_{ij}^1)-(c_{ij}^2)$ is a positive matrix, which implies $L-L'$ is a Lindblad generator thus $v = m_{L-L'} \in V^+$. 

Next we consider another example. Fix a Lindblad generator $L$ with $\sigma-$detailed balance, the fixed point algebra of which is $\mc N$. By Corollary \ref{comparison: upper bound}, we have \begin{equation} \label{upper gamma E}
\Gamma_L \le C \Gamma_{-(I- E_{\mc N},\tau)},C>0 \iff m_{L} \le C e
\end{equation} 
where $e = m_{-(I- E_{\mc N,\tau})}$. 
\begin{example}\label{ex:2}
Define an ordered vector space $(V,V^+,e)$ where $V = V^+ - V^+$ and 
\begin{equation} \label{ordered space}
V^+ = \{m_{\wt L}: \wt L \text{\ is\ a\ Lindblad\ generator\ with}\ \sigma-\text{detailed\ balance},\ m_{\wt L} \le c(\wt L)e,\exists c(\wt L)>0\}.
\end{equation}
\end{example}
Using the same argument as the previous example and Proposition \ref{I-E:1}, the order unit is Archimedean.

\subsection{Order relation from norm estimates}
Given a general Lindblad generator $L$, if another Lindblad generator $L'$ is close to $L$ under $\|\cdot\|_{2\to 2}$ norm, we aim to show some order relation if $\|L-L'\|_{2\to 2}$ is small enough. 
\begin{prop}\label{stability of gradient form}
Suppose $L$ is a Lindblad generator. Then $\forall \varepsilon>0$, there exists $\delta = \delta(\varepsilon,L)>0$, such that for any Lindblad generator $L'$,
\begin{equation}
||L-L'||_{2\to 2} < \delta \implies
(1-\varepsilon)\Gamma_L \le \Gamma_{L'} \le \Gamma_L + \varepsilon \Gamma_{-(I-E_{\tau})}.
\end{equation}
\end{prop}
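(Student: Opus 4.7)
My plan is to invoke the ordered vector space machinery of Example \ref{ex:1}: the Archimedean order unit $e = m_{-(I-E_\tau)}$ induces, via Proposition \ref{ordered norm}, a genuine norm $\|\cdot\|_{or}$ on the finite-dimensional space $V = V^+ - V^+ \subset \mb M_{n^2}(\mb M_n)$. Since $L \mapsto m_L$ is $\mathbb{R}$-linear and continuous from $(\mc B(\mc B(\mc H)), \|\cdot\|_{2\to 2})$ into $\mb M_{n^2}(\mb M_n)$, finite-dimensional norm equivalence yields a constant $K = K(n)$ with $\|m_L - m_{L'}\|_{or} \le K\|L - L'\|_{2\to 2}$ for all Lindblad generators $L, L'$. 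Unfolding \eqref{order norm definition} translates this directly into the completely positive sandwich
\[
\Gamma_L - K\|L-L'\|_{2\to 2}\,\Gamma_{-(I-E_\tau)} \;\le\; \Gamma_{L'} \;\le\; \Gamma_L + K\|L-L'\|_{2\to 2}\,\Gamma_{-(I-E_\tau)},
\]
and taking $\delta := \varepsilon/K$ produces the right-hand estimate of the proposition at once.

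For the left-hand estimate $(1-\varepsilon)\Gamma_L \le \Gamma_{L'}$, combining with the lower half of the sandwich reduces it to showing $K\delta\,\Gamma_{-(I-E_\tau)} \le \varepsilon\,\Gamma_L$ in the cp-order. This is not a consequence of the previous step alone --- by Corollary \ref{comparison: upper bound} the cp-comparison runs the opposite way --- so I would re-run the argument inside the smaller ordered vector space of Example \ref{ex:2}, tuned to the fixed-point algebra $\mc N$ of $L$. Its Archimedean order unit $e' = m_{-(I-E_{\mc N,\tau})}$ is comparable to $m_L$ up to an $L$-dependent constant $c(L)$ (one direction being Corollary \ref{comparison: upper bound}, the other coming from Archimedean-ness applied to $m_L$ itself), so a parallel norm-equivalence bound $\|m_L - m_{L'}\|_{or'} \le K'(L)\|L-L'\|_{2\to 2}$ can be absorbed into $\varepsilon\, m_L$ once $\delta = \delta(\varepsilon, L)$ is chosen small enough.

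The main obstacle is precisely this additive-to-multiplicative conversion on the lower side: $\Gamma_L$ may vanish on directions where $\Gamma_{-(I-E_\tau)}$ does not, so the constant must genuinely depend on $L$ (not only on $\varepsilon$ and the dimension). The multiplicative lower bound is therefore intrinsically tied to the structure of $L$ through the cone of Example \ref{ex:2}, whereas the additive upper bound works uniformly in $L'$ because $\Gamma_{-(I-E_\tau)}$ is a universal dominating gradient form. The final $\delta$ accordingly depends on $\varepsilon$, the dimension $n$, and the spread of $\Gamma_L$ relative to the ambient order unit.
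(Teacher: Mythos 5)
Your first half is exactly the paper's argument: Example \ref{ex:1}, Archimedean-ness of $e=m_{-(I-E_\tau)}$, equivalence of $\|\cdot\|_{or}$ with any norm on the finite-dimensional space $V$, and unfolding \eqref{order norm definition} to get the two-sided additive sandwich $m_L - K\delta e \le m_{L'} \le m_L + K\delta e$; this settles the right-hand inequality. You also correctly identify that the left-hand inequality does not follow from the sandwich alone. But your proposed repair has a genuine gap: the two-sided comparability $e' = m_{-(I-E_{\mc N,\tau})} \le c(L)\, m_L$ that you need is false in general, and ``Archimedean-ness applied to $m_L$'' cannot supply it --- an Archimedean order unit dominates every element of $V$, it is not itself dominated by $m_L$. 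Concretely, by Theorem \ref{main:comparison} the inequality $\Gamma_{-(I-E_{\mc N,\tau})} \le c\,\Gamma_L$ forces the traceless part of $\mc N'$ to lie in the \emph{linear span} of the jump operators of $L$, whereas the fixed-point structure only guarantees that the jump operators generate $\mc N'$ as an algebra. For a single self-adjoint jump operator $V$ with distinct eigenvalues on $\mb M_4$, $\mathrm{span}\{V\}$ is one-dimensional while the traceless part of $\mc N'$ is three-dimensional, so no such $c$ exists. (There is also a smaller mismatch: Proposition \ref{stability of gradient form} concerns arbitrary $L,L'$, while Example \ref{ex:2} and Corollary \ref{comparison: upper bound} live in the $\sigma$-detailed-balance setting of Proposition \ref{stability of gradient form: symmetric}.)

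The missing idea is a decomposition of the order unit rather than a global comparison. Write $-(I-E_\tau) = \sum_{j=1}^{n^2-1}\mc L_{F_j}$ with the basis chosen so that $\mathrm{span}\{F_j: j\le m\} = \mathrm{span}\{V_j: j\le m\}$, and split $e$ accordingly into $m_{\sum_{j\le m}\mc L_{F_j}} + m_{\sum_{j>m}\mc L_{F_j}}$. The lower half of the sandwich becomes $m_{L'} + K\delta\, m_{\sum_{j>m}\mc L_{F_j}} \ge m_L - K\delta\, m_{\sum_{j\le m}\mc L_{F_j}}$; Corollary \ref{drop} lets you discard the term supported on the orthogonal complement of $\mathrm{span}\{V_j\}$ from the dominating side, and only then does the remaining piece admit a reverse comparison $m_{\sum_{j\le m}\mc L_{F_j}} \le c(L)\, m_L$ via Theorem \ref{main:comparison} (the spans now being \emph{equal}), which converts the additive error into the multiplicative factor $1-\varepsilon$. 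Your closing diagnosis --- that $\Gamma_L$ may vanish in directions where $\Gamma_{-(I-E_\tau)}$ does not, so $\delta$ must depend on $L$ --- is exactly right; the span decomposition plus Corollary \ref{drop} is the mechanism that isolates those directions and renders them harmless.
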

\begin{proof}
We use the constructed ordered vector space in Example \ref{ex:1}. Since the order unit $e= m_{-(I-E_{\tau})}$ is Archimedean, by Proposition \ref{ordered norm}, the order norm $\|\cdot\|_{or}$ is actually a norm. Moreover, since $\dim V <\infty$, any two norms are equivalent, we have
$$\frac{1}{c_1} ||v|| \le ||v||_{or} \le c_1 ||v||,\ v\in V,\ c_1\ge1.$$
Suppose $||L-L'||<\delta$, where $\delta$ is to be determined. Then by definition of $$m_{L} = \big(L(e_{rs}^*e_{tv}) - L(e_{rs}^*)e_{tv} - e_{rs}^*L(e_{tv})\big)_{rs,tv},$$ see \eqref{big matrix}, we can find $c_2>0$, such that 
$$\|m_{L} -m_{L'}\| = \|\big((L-L')(e_{rs}^*e_{tv}) -(L-L')(e_{rs}^*)e_{tv} - e_{rs}^*(L-L')(e_{tv})\big)_{rs,tv}\| < c_2 \delta $$
Then by equivalence of the norms, we have 
$$||m_L-m_{L'}||_{or} < c_1c_2 \delta$$
Then by definition of $||\cdot||_{or}$ \eqref{order norm definition}, we have 
\begin{equation}\label{key inequality}
m_L - c_1c_2\delta e \le m_{L'} \le m_L + c_1c_2\delta e.
\end{equation}
Suppose 
$$Lx = i[H,x]+ \sum_{j=1}^m \big[V_j^*xV_j - \frac{1}{2}(V_j^*V_jx + xV_j^*V_j)\big]$$
is the standard form of Lindblad generator. We also write down the general form of $-(I-E_{\tau})$:
\begin{align*}
-(I-E_{\tau})(x)= \sum_{j=1}^m \big[F_j^*xF_j - \frac{1}{2}(F_j^*F_jx + xF_j^*F_j)\big] + \sum_{j=m+1}^{n^2-1} \big[F_j^*xF_j - \frac{1}{2}(F_j^*F_jx + xF_j^*F_j)\big],
\end{align*} 
where we assume $span\{F_j:1\le j \le m\} = span\{V_j:1\le j \le m\}$.

By \eqref{key inequality}, we have by linearity 
\begin{align*}
m_{L'} \ge m_{L_1} - c_1c_2\delta m_{L_2},
\end{align*}
where 
\begin{align*}
L_1 = \sum_{j=1}^{m} \mc L_{V_j} - c_1c_2\delta \sum_{j=1}^m \mc L_{F_j},\ L_2 = \sum_{j = m+1}^{n^2-1} \mc L_{F_j}.
\end{align*}
Since the jump operators of $L_1$ and $L_2$ are orthogonal, by Corollary \ref{drop}, we have $m_{L'} \ge m_{L_1}$.
Using the fact $span\{F_j:1\le j \le m\} = span\{V_j:1\le j \le m\}$, there exists a constant $c(L)>0$, such that
$$c(L) m_{\sum_{j=1}^{m} \mc L_{V_j}} \ge m_{\sum_{j=1}^m \mc L_{F_j}}.$$ 
Choosing $0<\delta < \frac{\varepsilon}{c(L)c_1c_2}$, we have 
$$m_{L'} \ge m_{L_1} \ge (1 - c(L)c_1c_2\delta)m_{L} \ge (1-\varepsilon)m_{L}.$$
Then applying Proposition \ref{comparison}, we finish the proof of the first inequality. The other inequality is obvious if we choose $0<\delta < \frac{\varepsilon}{c_1c_2}$.
\end{proof}
If we restrict ourselves to the class of Lindblad generators with $\sigma-$detailed balance, using the same argument as above and Example \ref{ex:2}, we have the following order relation: 
\begin{prop}\label{stability of gradient form: symmetric}
Suppose $L$ is a Lindblad generator with $\sigma-$detailed balance. $\mc N$ is its fixed point algebra. Then $\forall \varepsilon>0$, there exists $\delta = \delta(\varepsilon,L)> 0$, such that for any Lindblad generator $L'$ with $\sigma-$detailed balance with the same fixed point algebra $\mc N$, we have
\begin{equation}
\|L-L'\|_{L_2 \to L_2} < \delta \implies
(1-\varepsilon)\Gamma_{L} \le \Gamma_{L'} \le \Gamma_{L} + \varepsilon \Gamma_{-(I-E_{\mc N,\tau})}.
\end{equation}
\end{prop}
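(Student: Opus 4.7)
The plan is to follow the structure of the proof of Proposition \ref{stability of gradient form}, but now inside the restricted ordered vector space $(V, V^+, e)$ from Example \ref{ex:2}, whose positive cone consists of gradient matrices of $\sigma$-detailed balance Lindblad generators dominated by some multiple of $e = m_{-(I - E_{\mc N, \tau})}$. The paragraph following Example \ref{ex:2} already establishes that $e$ is an Archimedean order unit on this $V$, so Proposition \ref{ordered norm} says the order seminorm $\|\cdot\|_{or}$ is a genuine norm on $V$, and since $V$ is finite dimensional this norm is equivalent to the restriction of the usual operator norm on $\mb M_{n^2}(\mb M_n)$.

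The first step is to upgrade $\|L - L'\|_{2 \to 2}$-continuity to $\|\cdot\|_{or}$-continuity of $L \mapsto m_L$. The explicit formula $m_L = \big(L(e_{rs}^* e_{tv}) - L(e_{rs}^*)e_{tv} - e_{rs}^* L(e_{tv})\big)_{rs,tv}$ yields a dimensional constant $c_2$ with $\|m_L - m_{L'}\| \le c_2 \|L - L'\|_{2 \to 2}$, and the equivalence of norms gives $\|\cdot\| \le c_1 \|\cdot\|_{or}$ on $V$. Combining these and unwinding the definition of $\|\cdot\|_{or}$, I obtain, whenever $\|L - L'\|_{2\to 2} < \delta$,
$$ m_L - c_1 c_2 \delta\, e \;\le\; m_{L'} \;\le\; m_L + c_1 c_2 \delta\, e. $$
The key justification needed to apply the norm equivalence is that $m_L, m_{L'} \in V^+$, which is exactly Corollary \ref{comparison: upper bound} applied to $L$ and $L'$ since both are $\sigma$-detailed balance with fixed point algebra $\mc N$. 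Taking $\delta < \varepsilon/(c_1 c_2)$, Proposition \ref{comparison} converts the right inequality above into $\Gamma_{L'} \le \Gamma_L + \varepsilon \Gamma_{-(I-E_{\mc N, \tau})}$, giving the upper bound.

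For the lower bound, I would run the truncation argument exactly as in Proposition \ref{stability of gradient form}. Let $\{V_j\}_{1 \le j \le m}$ be the standard-form jump operators of $L$; by Proposition \ref{I-E:1} one can choose a self-adjoint orthonormal set $\{\wt V_j : 1 \le j \le d\}$ whose linear span together with $I_n$ is $\mc N'$, arranged so that $\text{span}\{\wt V_j : 1 \le j \le m\} = \text{span}\{V_j : 1 \le j \le m\}$, and write $-(I - E_{\mc N, \tau}) = \sum_{j=1}^d \mc L_{\wt V_j}$. By linearity the left inequality above rewrites as $m_{L'} \ge m_{L_1} - c_1 c_2 \delta\, m_{L_2}$, where $L_1 = \sum_{j=1}^m \mc L_{V_j} - c_1 c_2 \delta \sum_{j=1}^m \mc L_{\wt V_j}$ has jump operators in $\text{span}\{V_j\}$ and $L_2 = \sum_{j = m+1}^d \mc L_{\wt V_j}$ has jump operators orthogonal to $\text{span}\{V_j\}$. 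Corollary \ref{drop} then eliminates the orthogonal $L_2$ contribution to yield $m_{L'} \ge m_{L_1}$. Finally, by Theorem \ref{main:comparison} there exists $c(L) > 0$ with $m_{\sum_{j=1}^m \mc L_{\wt V_j}} \le c(L)\, m_{\sum_{j=1}^m \mc L_{V_j}}$, so $m_{L_1} \ge (1 - c(L) c_1 c_2 \delta) m_L$, and shrinking $\delta$ further below $\varepsilon/(c(L) c_1 c_2)$ completes the lower bound via Proposition \ref{comparison}.

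The main obstacle, as the text already flags at Example \ref{ex:2}, is verifying that the ordered-space machinery genuinely applies on this smaller cone: unlike Example \ref{ex:1}, here $V^+$ consists only of gradient matrices dominated by $e$, so one must confirm both the Archimedean property and the norm equivalence on $V = V^+ - V^+$. The critical input making all of this go through is that $L$ and $L'$ share the same fixed point algebra $\mc N$, because this is precisely what forces $m_L, m_{L'} \in V^+$ via Corollary \ref{comparison: upper bound} and hence places the difference $m_L - m_{L'}$ into $V$, where the equivalence of norms can legally be invoked.
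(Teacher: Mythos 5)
Your proposal is correct and follows essentially the same route as the paper, which explicitly states that this proposition is obtained by "using the same argument as above and Example \ref{ex:2}" --- i.e., rerunning the proof of Proposition \ref{stability of gradient form} in the ordered vector space with Archimedean order unit $e = m_{-(I-E_{\mc N,\tau})}$, with Corollary \ref{comparison: upper bound} guaranteeing $m_L, m_{L'} \in V^+$ and Proposition \ref{I-E:1} supplying the decomposition of $-(I-E_{\mc N,\tau})$ needed for the truncation step. Your added emphasis on why the shared fixed point algebra is the hypothesis that places both gradient matrices in the cone is exactly the point the paper leaves implicit.
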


\section{Application: Stability property of non-commutative functional inequalites}
In this section, we apply the comparison principle of quantum Markov semigroups and the results established in Section 4,5 to show stability properties for two functional inequalities, \textit{Poincar\'e inequality} and \textit{complete modified Logarithmic Sobolev inequality}. 
\subsection{Stability of Poincar\'e inequality}
Suppose $L: \mc M \to \mc M$ is a Lindblad generator with $\sigma-$detailed balance. Its fixed point algebra is $\mc N$. We say that the quantum Markov semigroup $T_t = e^{tL}$ satisfies \textit{Poincar\'e inequality} for some $\lambda>0$ if 
\begin{equation}\label{Poincare}
\text{Var}_{\mc N,\sigma}(T_t(X)) \le e^{-\lambda t} \text{Var}_{\mc N,\sigma}(X),\forall X \in \mc M.
\end{equation}
The variance $\text{Var}_{\mc N,\sigma}$ is defined via the BKM-inner product.
\begin{equation}
\text{Var}_{\mc N,\sigma}(X) = \int_0^1 \Tr\big((X-E_{\mc N,\sigma}(X))^*\sigma^s(X-E_{\mc N,\sigma}(X))\sigma^{1-s}\big)ds. 
\end{equation}
By taking the derivative of $\text{Var}_{\mc N,\sigma}(T_t(X))$, it is easy to see the inequality \eqref{Poincare} is equivalent to 
\begin{equation}
\lambda \text{Var}_{\mc N,\sigma}(X) \le \mc E_{L}(X).
\end{equation}
We call $\mc E_{L}$ the \textit{Dirichlet form} associated to $L$, which is defined as 
\begin{equation}
\mc E_{L}(X,Y):= \int_0^1 \Tr(X^* \sigma^s L(Y)\sigma^{1-s})ds
\end{equation}
and we denote $\mc E_{L}(X,X):= \mc E_{L}(X)$. We call the largest constant $\lambda(L)$ such that \eqref{Poincare} holds the \textit{spectral gap} of $L$, which is given by 
\begin{equation}
\lambda(L):= \inf_X \frac{\mc E_{L}(X)}{\text{Var}_{N,\sigma}(X)}.
\end{equation}
Then our stability result for \textit{Poincar\'e inequality} is given as follows:
\begin{theorem}\label{Stability of PI}
Suppose $L: \mc M \to \mc M$ is a Lindblad generator with $\sigma-$detailed balance. Then for any $\varepsilon>0$, there exists $\delta = \delta(\varepsilon, \mc L)>0$ such that for any Lindblad generator $L'$ with $\sigma$-detailed balance with the same fixed point algebra as $L$, we have 
\begin{align*}
\|L-L'\|_{L_2 \to L_2} < \delta \implies
(1-\varepsilon) \lambda(L) \le \lambda(L').
\end{align*}
\end{theorem}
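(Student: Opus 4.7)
The plan is to lift the $\mc M$-valued gradient-form stability from the preceding section to a scalar BKM Dirichlet-form inequality, and then conclude via the variational characterization of the spectral gap. Since $L$ and $L'$ share both $\sigma$ and the fixed-point algebra $\mc N$, the variance functional $\mathrm{Var}_{\mc N,\sigma}$ in the denominator of the Rayleigh quotient is common to both, so the task reduces to proving $(1-\varepsilon)\mc E_L(X) \le \mc E_{L'}(X)$ for every $X \in \mc M$. I will first apply Proposition \ref{stability of gradient form: symmetric}: given $\varepsilon > 0$, it supplies $\delta = \delta(\varepsilon, L) > 0$ such that $\|L - L'\|_{L_2 \to L_2} < \delta$ forces the one-sided operator inequality $(1-\varepsilon)\Gamma_L(X, X) \le \Gamma_{L'}(X, X)$ in $\mc M$ for every $X$. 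Only this lower bound is used; the upper bound involving $\Gamma_{-(I - E_{\mc N, \tau})}$ is not needed for the spectral-gap comparison.

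Next, I translate this $\mc M$-valued inequality into the scalar BKM Dirichlet-form inequality. For a $\sigma$-DBC generator in standard form $Lx = \sum_j e^{-\omega_j/2}\bigl(V_j^* x V_j - \tfrac{1}{2}(V_j^* V_j x + x V_j^* V_j)\bigr)$ with $\Delta_\sigma V_j = e^{-\omega_j} V_j$, the commutation identities $\sigma^\alpha V_j = e^{-\alpha \omega_j} V_j \sigma^\alpha$ let one rewrite $\mc E_L(X) = \int_0^1 \Tr(X^* \sigma^s L(X) \sigma^{1-s})\, ds$ as a positive combination of quantities of the form $\int_0^1 \Tr(\sigma^s [V_j, X]^* \sigma^{1-s} [V_j, X])\, ds$ with $\sigma$-dependent weights (of logarithmic-mean type) depending on the eigenvalues $\omega_j$. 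Since $L$ and $L'$ are $\sigma$-DBC for the same $\sigma$, their jump operators live in the common $\Delta_\sigma$-eigenspaces, and the bimodule-linear comparison between $\{V_j\}$ and $\{\wt V_j\}$ encoded in Theorem \ref{main:comparison} (and used in Proposition \ref{stability of gradient form: symmetric}) acts within each of these eigenspaces. Consequently the weights in the integral representations of $\mc E_L$ and $\mc E_{L'}$ match, and the operator inequality $(1-\varepsilon)\Gamma_L \le \Gamma_{L'}$ propagates to $(1-\varepsilon)\mc E_L(X) \le \mc E_{L'}(X)$.

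Combining with the spectral-gap inequality for $L$, $\mc E_L(X) \ge \lambda(L)\mathrm{Var}_{\mc N,\sigma}(X)$, I obtain $\mc E_{L'}(X) \ge (1-\varepsilon)\lambda(L)\mathrm{Var}_{\mc N,\sigma}(X)$ for every $X$, and taking the infimum over $X$ in the Rayleigh quotient yields $\lambda(L') \ge (1-\varepsilon)\lambda(L)$. The main obstacle is the second step: unlike the GNS Dirichlet form, which for $\sigma$-DBC generators equals $\tfrac{1}{2}\Tr(\sigma\Gamma_L(X,X))$ and is therefore a manifestly positive linear functional of $\Gamma_L$, the BKM form carries $\omega_j$-dependent weights coming from integrating $\sigma^s \cdots \sigma^{1-s}$ against eigenvectors of $\Delta_\sigma$. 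One must verify that the bimodule comparison of jump operators in Theorem \ref{main:comparison} respects the $\Delta_\sigma$-eigenspace decomposition, so that the single constant $(1-\varepsilon)$ carries uniformly through all the eigenvalue-dependent weights. Once this compatibility is established, the remainder of the argument is routine.
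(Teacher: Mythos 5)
Your proposal is correct and follows essentially the same route as the paper: apply Proposition \ref{stability of gradient form: symmetric} to get $(1-\varepsilon)\Gamma_L \le \Gamma_{L'}$, decompose the jump operators according to the eigenspaces of $\Delta_\sigma$ so that the comparison matrix $A$ is block-diagonal and the $\omega_k$-dependent weights in the BKM Dirichlet form match block by block, and then pass the constant through the Rayleigh quotient. The compatibility issue you flag at the end is exactly the point the paper handles by writing $A=\mathrm{diag}(A_1,\dots,A_l)$ with $\|A_k^*A_k\|\le\|A^*A\|\le \frac{1}{1-\varepsilon}$ on each eigenspace.
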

\begin{proof}
Under the assumption above, using Proposition \ref{stability of gradient form: symmetric}, we know that $(1-\varepsilon)\Gamma_{L} \le \Gamma_{L'}$. We claim that we have 
\begin{equation}
(1-\varepsilon)\mc E_{L}(X) \le \mc E_{L'}(X), \forall X \in \mc M.
\end{equation}
Recall that $L = \sum_{j \in \mc J} e^{-\omega_j/2}\mc L_{V_j}$. We rewrite the expression of $L,L'$ as in \eqref{generator partition}: suppose $\mc J = \{\mc J_1,\cdots,\mc J_l\}$, such that for each $1\le k\le l$ and $j\in \mc J_k, \omega_j = \omega_k$
\begin{equation}\label{partition}
L = \sum_{k=1}^l e^{-\omega_k/2}\sum_{j \in \mc J_k} \mc L_{V_j},
\end{equation}
where $\{V_j\}_{j \in \mc J_k}$ are orthonormal eigenvectors of $\Delta_{\sigma}$ corresponding to the eigenvalue $e^{-\omega_k/2}$. Since $\Gamma_L \le C \Gamma_{L'}$ and $L'$ satisfies $\sigma$-detailed balance condition. We can write $L'$ as 
\begin{align*}
L' = \sum_{k=1}^l e^{-\omega_k/2}\sum_{j \in \mc J_k} \mc L_{\wt V_j} + \text{remainder},
\end{align*}
where the remainder term is orthogonal to $L$ and $span\{V_j\}_{j \in \mc J} = span\{\wt V_j\}_{j \in \mc J}$. The corresponding matrix is given by $A \in \mb M_m$, $|\mc J|= m$:
$$(V_j)_{j \in \mc J} = (A\otimes I_n)(\wt V_j)_{j \in \mc J}.$$ 
Moreover, for each $1\le k \le l$ we have $A_k \in \mb M_{m_k}, |\mc J_k| = m_k$ such that
\begin{align*}
(V_j)_{j \in \mc J_k}
= (A_k\otimes I_n) (\wt V_j)_{j \in \mc J_k}
\end{align*}
and $A = diag(A_1,\cdots, A_k,\cdots,A_l)$. Define $\Omega_{\delta_{L_k}} = \Omega_{\delta_{\sum_{j\in \mc J_k}\mc L_{V_j}}}$ and $\Omega_{\delta_{L_k'}} = \Omega_{\delta_{\sum_{j\in \mc J_k}\mc L_{\wt V_j}}}$as in \eqref{defn:bimodule}. Then the module map defined by \eqref{defn:module} 
$
T_k: \Omega_{\delta_{L_k'}} \to \Omega_{\delta_{L_k}}$ is given by $A_k \otimes I_n$. By direct calculation, the Dirichlet form is given by \cite{CM17}
\begin{align*}
\mc E_L(X) & = \int_0^1 \sum_{j\in \mc J}e^{(\frac{1}{2}-s)\omega_j}\Tr\big([V_j,X]^*\sigma^2 [V_j,X]\sigma^{1-s}\big)ds \\
& = \sum_{k=1}^l \int_0^1 e^{(\frac{1}{2}-s)\omega_k}\sum_{j\in \mc J_k}\Tr\big((\sigma^{\frac{s}{2}}[V_j,X]\sigma^{\frac{1-s}{2}})^*(\sigma^{\frac{s}{2}}[V_j,X]\sigma^{\frac{1-s}{2}})\big)ds \\
& \le \sum_{k=1}^l C_k \int_0^1 e^{(\frac{1}{2}-s)\omega_k}\sum_{j\in \mc J_k}\Tr\big((\sigma^{\frac{s}{2}}[\wt V_j,X]\sigma^{\frac{1-s}{2}})^*(\sigma^{\frac{s}{2}}[\wt V_j,X]\sigma^{\frac{1-s}{2}})\big)ds \\
& \le \frac{1}{1-\varepsilon}\sum_{k=1}^l \int_0^1 e^{(\frac{1}{2}-s)\omega_k}\sum_{j\in \mc J_k}\Tr\big((\sigma^{\frac{s}{2}}[\wt V_j,X]\sigma^{\frac{1-s}{2}})^*(\sigma^{\frac{s}{2}}[\wt V_j,X]\sigma^{\frac{1-s}{2}})\big)ds \le  \frac{1}{1-\varepsilon} \mc E_{L'}(X).
\end{align*}
For the first inequality above, we used \eqref{module map}:
\begin{align*}
\Tr \langle T_k(\xi),T_k(\xi) \rangle_{\Omega_{\delta_{L_k}}} \le C_k \Tr\langle \xi, \xi \rangle_{\Omega_{\delta_{L_k'}}},
\end{align*}
for $C_k = \|A_k^*A_k\|$ and $\xi = \sigma^{s/2}\delta_{L_k'}(X) \sigma^{(1-s)/2}$. For the second inequality, we used the fact that $C_k = \|A_k^*A_k\| \le \|A^*A\|\le \frac{1}{1-\varepsilon}$. Thus it is easy to see that 
\begin{align*}
\lambda(L) = \inf_X \frac{\mc E_{L}(X)}{\text{Var}_{\mc N,\sigma}(X)} \le \frac{1}{1-\varepsilon} \inf_X \frac{\mc E_{L'}(X)}{\text{Var}_{\mc N,\sigma}(X)} = \frac{1}{1-\varepsilon}\lambda(L').
\end{align*}
\end{proof} 

\subsection{Stability of complete modified Logarithmic Sobolev inequality}
Suppose $L: \mc M \to \mc M$ is a Lindblad generator with $\sigma-$detailed balance. $T_t = e^{tL}$ is the quantum Markov semigroup generated by $L$. Then we say that $L$ satisfies $\alpha-$\textit{modified Logarithmic Sobolev inequality} for some $\alpha>0$, if for any $\rho \in \mc D^+(\mc H)$
\begin{equation}\label{MLSI}
D(T_{t*}(\rho) \| \mc E_{\mc N *}(\rho)) \le e^{-\alpha t} D(\rho \| \mc (E_{\mc N})_*(\rho)),
\end{equation}
where $D(\cdot\|\cdot)$ is the relative entropy and $E_{\mc N}$ is the long time limit of $T_t$: $\lim_{t \to \infty} T_t = E_{\mc N}$ \cite{FM82}. A stronger version called \textit{complete modified Logarithmic Sobolev inequality} is introduced in \cite{GJL20}. We say $L$ satisfies $\alpha-$\textit{complete modified Logarithmic Sobolev inequality} for some $\wt \alpha>0$ if for any finite dimensional reference system $R$ and $\rho \in \mc D(\mc H \otimes \mc H_R)$,
\begin{equation}\label{complete MLSI}
D((T_{t*}\otimes id_R)(\rho) \| (\mc E_{\mc N *}\otimes id_R)(\rho)) \le e^{-\wt \alpha t} D(\rho \| (\mc E_{\mc N *}\otimes id_R)(\rho)).
\end{equation}
By taking the derivative, the equation \eqref{complete MLSI} is equivalent to the following inequality
\begin{equation}\label{CMLSI}
\wt \alpha D(\rho \|(\mc E_{\mc N *}\otimes id_R)(\rho) ) \le \text{EP}_{L\otimes id_R}(\rho)
\end{equation}
for any finite dimensional reference system $R$ and $\rho \in \mc D(\mc H \otimes \mc H_R)$. $\text{EP}_{L\otimes id_R}(\rho)$ is defined by 
\begin{align*}
\text{EP}_{L\otimes id_R}(\rho): = \frac{d}{dt}\bigg|_{t=0}D((T_{t*}\otimes id_R)(\rho)\|(\mc E_{\mc N *}\otimes id_R)(\rho)).
\end{align*}
Suppose $Lx = \sum_{j \in \mc J} e^{-\omega_j /2} \big[V_j^*xV_j - \frac{1}{2}(V_j^*V_jx + xV_j^*V_j)\big]$, we define $\partial_j(X): = [V_j,X], j \in \mc J$, and $\Gamma_{\sigma,\frac{1}{2}}(X):= \sigma^{1/2}X \sigma^{1/2}$ for any $X \in \mc M$. It is shown in \cite{JLR19} that 
$$\text{EP}_{L}(\rho) = \sum_{j\in \mc J} \big\langle \Gamma_{\sigma,\frac{1}{2}}\circ \partial_j \circ \Gamma_{\sigma,\frac{1}{2}}^{-1}(\rho), [\rho]_{\omega_j}^{-1}(\Gamma_{\sigma,\frac{1}{2}}\circ \partial_j \circ \Gamma_{\sigma,\frac{1}{2}}^{-1}(\rho)) \big\rangle_{HS},$$
where 
\begin{equation}
[\rho]_{\omega_j}^{-1}(X):= \int_0^{\infty}(r+ e^{-\omega_j/2}\rho)^{-1}X(r+ e^{\omega_j/2}\rho)^{-1}dr.
\end{equation}
Now we denote $\text{CMLSI}(L)$ as the optimal constant $\wt \alpha>0$ such that \eqref{CMLSI} holds, i.e., 
\begin{equation}
\text{CMLSI}(L) = \inf_R\inf_{\rho \in \mc D(\mc H\otimes \mc H_R)}\frac{\text{EP}_{L\otimes id_R}(\rho)}{D(\rho \|(\mc E_{\mc N *}\otimes id_R)(\rho) )}
\end{equation}

It is shown in \cite{GJL21, GR21} that $\text{CMLSI}(L)>0$ for any $L$ with $\sigma-$detailed balance. Applying our comparison theorem, we can show the following stability property of complete Logarithmic Sobolev inequality:
\begin{theorem}\label{Stability of CLSI}
Suppose $L: \mc M \to \mc M$ is a Lindblad generator with $\sigma-$detailed balance. Its fixed point algebra is $\mc N$. Then $\forall \varepsilon>0$, there exists $\delta = \delta(\varepsilon,L)> 0$, such that for any Lindblad generator $L'$ with $\sigma-$detailed balance which has fixed point algebra $\mc N$, we have
\begin{equation}
\|L-L'\|_{L_2 \to L_2} < \delta \implies
(1-\varepsilon)\text{CMLSI}(L)\le \text{CMLSI}(L').
\end{equation}
\end{theorem}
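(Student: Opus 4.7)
The plan is to mirror the proof of Theorem \ref{Stability of PI}, replacing the Dirichlet form $\mc E_L$ with the entropy production $\text{EP}_L(\rho)$, and using that Proposition \ref{stability of gradient form: symmetric} produces a \emph{completely} positive comparison of gradient forms. Given $\varepsilon>0$, choose $\delta$ by Proposition \ref{stability of gradient form: symmetric} so that $\|L-L'\|_{L_2 \to L_2} < \delta$ forces $(1-\varepsilon)\Gamma_L \le \Gamma_{L'}$. Combined with Theorem \ref{main:comparison} and $\sigma$-detailed balance, grouping jump operators by eigenspaces of $\Delta_\sigma$ gives the decomposition
\begin{align*}
L = \sum_{k=1}^l e^{-\omega_k/2}\sum_{j \in \mc J_k}\mc L_{V_j}, \qquad L' = \sum_{k=1}^l e^{-\omega_k/2}\sum_{j \in \mc J_k}\mc L_{\wt V_j} + (\text{remainder}),
\end{align*}
together with a block-diagonal coefficient matrix $A = \mathrm{diag}(A_1,\ldots,A_l)$, $A_k \in \mb M_{m_k}$, satisfying $V_j = \sum_{l \in \mc J_k}(A_k)_{jl}\wt V_l$ within each block, $\|A_k^* A_k\| \le (1-\varepsilon)^{-1}$, and with a remainder whose jump operators are orthogonal to those of $L$---exactly as produced in the proof of Theorem \ref{Stability of PI}.

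Next, I would apply the entropy production formula of \cite{JLR19}. Setting $X_j := \Gamma_{\sigma,1/2}\circ\partial_j\circ\Gamma_{\sigma,1/2}^{-1}(\rho)$ and $\wt X_l$ analogously, the identity $\partial_j = \sum_l (A_k)_{jl}\wt\partial_l$ on $\mc J_k$ yields $X_j = \sum_l (A_k)_{jl}\wt X_l$. The decisive observation is that every term in the $\mc J_k$-block carries the \emph{same} weight $[\rho]_{\omega_k}^{-1}$; writing
\begin{align*}
\sum_{j \in \mc J_k}\big\langle X_j, [\rho]_{\omega_k}^{-1}(X_j)\big\rangle_{HS} = \int_0^\infty \sum_{j \in \mc J_k}\big\| (r+e^{-\omega_k/2}\rho)^{-1/2}X_j(r+e^{\omega_k/2}\rho)^{-1/2}\big\|_{HS}^2\, dr
\end{align*}
exhibits the block contribution as the squared norm of a column vector $(\xi_l)_{l \in \mc J_k}$ in an auxiliary Hilbert space on which $A_k$ acts with operator norm $\|A_k\|$. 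The inequality $\|A_k\xi\|^2 \le \|A_k^*A_k\|\,\|\xi\|^2$, summation over $k$, and discarding the non-negative remainder contribution to $\text{EP}_{L'}(\rho)$ then give
\begin{align*}
(1-\varepsilon)\text{EP}_L(\rho) \le \text{EP}_{L'}(\rho).
\end{align*}

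The complete version lifts for free. For any finite-dimensional reference $R$, the jump operators of $L \otimes id_R$ are $V_j \otimes I_R$, and $V_j \otimes I_R = \sum_l (A_k)_{jl}(\wt V_l \otimes I_R)$ with the \emph{same} $A_k$. Both $L\otimes id_R$ and $L'\otimes id_R$ share the fixed-point algebra $\mc N \otimes \mc B(\mc H_R)$, so the denominator $D(\rho \,\|\, (\mc E_{\mc N *}\otimes id_R)(\rho))$ in the CMLSI ratio is identical for both. Applying step two to $L\otimes id_R$, $L'\otimes id_R$, and $\rho \in \mc D(\mc H \otimes \mc H_R)$, and taking the infimum over $R$ and $\rho$, yields $(1-\varepsilon)\text{CMLSI}(L) \le \text{CMLSI}(L')$. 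The main obstacle is the commutation used in step two: it works precisely because grouping jump operators by eigenspaces of $\Delta_\sigma$ makes the weight $[\rho]_{\omega_j}^{-1}$ block-constant, so the scalar $\|A_k^*A_k\|$ can be factored cleanly out of the weighted quadratic form---without the partition by $\{\omega_k\}$, one would be mixing different weights and could not extract the bound $(1-\varepsilon)^{-1}$.
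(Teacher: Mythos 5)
Your proposal is correct and follows essentially the same route as the paper: the same use of Proposition \ref{stability of gradient form: symmetric}, the same partition of jump operators by eigenvalues of $\Delta_\sigma$ with block-diagonal $A=\mathrm{diag}(A_1,\dots,A_l)$, and the same key observation that the weight $[\rho]_{\omega_k}^{-1}$ is constant on each block so that $\|A_k^*A_k\|\le (1-\varepsilon)^{-1}$ factors out of the quadratic form. The only cosmetic difference is that you exhibit positivity of $[\rho]_{\omega_k}^{-1}$ via the integral of squared Hilbert--Schmidt norms, whereas the paper uses the spectral decomposition of $\rho$ and the scalar identity $\int_0^\infty (r+\lambda)^{-1}(r+\mu)^{-1}\,dr = \frac{\log\lambda-\log\mu}{\lambda-\mu}$; both yield the same blockwise comparison.
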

\begin{proof}
Similar to the proof of Theorem \ref{Stability of PI}, using Proposition \ref{stability of gradient form: symmetric}, we know that $(1-\varepsilon)\Gamma_{L} \le \Gamma_{L'}$. We claim that we have 
\begin{equation}
(1-\varepsilon) \text{EP}_{L}(\rho) \le \text{EP}_{L'}(\rho), \forall \rho \in \mc D(\mc H).
\end{equation}
Using the same trick \eqref{partition} as before, we can rewrite $L,L'$ as in the proof of Theorem \ref{Stability of PI}. Denote $\sigma^{1/2}X\sigma^{1/2} = \rho$, $\text{EP}_L(\rho)$ can be written as 
\begin{align*}
\text{EP}_L(\rho) = \sum_{k=1}^l \sum_{j \in \mc J_k} \Tr((\sigma^{1/2}[V_j,X]\sigma^{1/2})^{*} [\rho]_{\omega_k}^{-1}(\sigma^{1/2}[V_j,X]\sigma^{1/2})).
\end{align*}
Now we use the elementary integral 
\begin{align*}
\int_0^{\infty} (r+\lambda)^{-1} (r+\mu)^{-1}dr = \frac{\log \lambda - \log \mu}{\lambda - \mu}, \lambda, \mu \in \mb R^{+}
\end{align*}
and functional calculus, we know that
\begin{align*}
[\rho]_{\omega_k}^{-1} & = \int_{0}^{\infty}(r+e^{-\omega_k/2}L_{\rho})^{-1}(r+e^{\omega_k/2}R_{\rho})^{-1} dr \\
& = \sum_{u,v =1}^n \frac{\log(e^{-\omega_k/2} \lambda_u) -\log(e^{\omega_k/2} \lambda_v) }{e^{-\omega_k/2} \lambda_u - e^{\omega_k/2} \lambda_v}L_{E_u}R_{E_{v}},
\end{align*}
where $L_{\rho}(X):= \rho X, R_{\rho}(X):=X\rho$ and $\rho = \sum_{u=1}^n \lambda_u E_u$ is the spectral decomposition, with $E_u$ being projections.
Then similar to the proof of Theorem \ref{Stability of PI},
\begin{align*}
& EP_{L}(\rho) = \sum_{u,v =1}^n \sum_{k=1}^l \frac{\log(e^{-\omega_k/2} \lambda_u) -\log(e^{\omega_k/2} \lambda_v)}{e^{-\omega_k/2} \lambda_u - e^{\omega_k/2}} \\
& \cdot \sum_{j \in \mc J_k} \Tr\big((E_u\sigma^{1/2}[V_j,X]\sigma^{1/2}E_v)^{*} (E_u\sigma^{1/2}[V_{j},X]\sigma^{1/2}E_v)\big) \\
& \le \sum_{u,v =1}^n \sum_{k=1}^l \frac{\log(e^{-\omega_k/2} \lambda_u) -\log(e^{\omega_k/2} \lambda_v)}{e^{-\omega_k/2} \lambda_u - e^{\omega_k/2}} \\
& \cdot C_k\sum_{j \in \mc J_k} \Tr\big((E_u\sigma^{1/2}[\wt V_j,X]\sigma^{1/2}E_v)^{*} (E_u\sigma^{1/2}[\wt V_{j},X]\sigma^{1/2}E_v)\big) \\
& \le \frac{1}{1-\varepsilon} EP_{L'}(\rho).
\end{align*}
The above proof can be adapted easily so that we have
$$(1-\varepsilon) \text{EP}_{L\otimes id_R}(\rho) \le \text{EP}_{L'\otimes id_R}(\rho), \forall \rho \in \mc D(\mc H \otimes \mc H_R).$$
Then 
\begin{align*}
\text{CMLSI}(L) & = \inf_R\inf_{\rho \in \mc D(\mc H\otimes \mc H_R)}\frac{\text{EP}_{L\otimes id_R}(\rho)}{D(\rho \|(\mc E_{\mc N *}\otimes id_R)(\rho) )} \\
& \le \frac{1}{1-\varepsilon}\inf_R\inf_{\rho \in \mc D(\mc H\otimes \mc H_R)}\frac{\text{EP}_{L'\otimes id_R}(\rho)}{D(\rho \|(\mc E_{\mc N *}\otimes id_R)(\rho) )} = \frac{1}{1-\varepsilon}\text{CMLSI}(L'). 
\end{align*}

\end{proof}
\begin{remark}
Note that our stability theorem is different from the following, which is proved in \cite{JLR19}:
\textit{
Suppose $L$(resp. $L'$) is a generator of quantum Markov semigroup with $\sigma-$(resp. $\sigma'-$) detailed balance condition, with the following form
\begin{equation}
\begin{aligned}
& Lx = \sum_{i \in \mathcal{I}} c_i \big[V_i^*xV_i - \frac{1}{2}(V_i^*V_ix + xV_i^*V_i )\big] \\
& L'x = \sum_{i \in \mathcal{I}} c_i' \big[V_i^*xV_i - \frac{1}{2}(V_i^*V_ix + xV_i^*V_i )\big]
\end{aligned}
\end{equation}
where $c_i,c_i'>0$ for all $i \in \mathcal{I}$, and $\{V_i\}_{i \in \mathcal{I}}$ are eigenvectors of both $\Delta_{\sigma}$ and $\Delta_{\sigma'}$, such that 
$\{V_i\}_{i \in \mathcal{I}} = \{V_i^*\}_{i \in \mathcal{I}}$, $\tau(V_i) = 0$ and $ \tau(V_i^*V_j) = \delta_{ij}$. \\
Then 
\begin{equation}
\min_{k,l}\frac{\sigma_k}{\sigma_l} \min_{i \in \mathcal{I}}\frac{c_i}{c_i'} \text{CLSI}(L') \le \text{CLSI(L)} \le \text{CLSI}(L') \max_{k,l}\frac{\sigma_k}{\sigma_l} \max_{i \in \mathcal{I}}\frac{c_i}{c_i'}
\end{equation}
}
In fact, the assumption of the above theorem requires the two generators must have the standard form given by the same $V_j$. However, by a perturbation, the standard form of two generators may not be given by the same $V_j$. 
\end{remark}

\section{Application: Stability property of $g^{(2)}$ in quantum optics}
Starting from a physical system given by the Green's function of an electric or magnetic field, and the corresponding quantum field, the Lindblad generator is obtained by a number of approximation steps, in particular the Born-Markov approximation. This approximation gives us the evolution of density operators, which is also known as the \textit{master equation}:
\begin{align*}
\frac{d}{dt}\rho(t) = L_*(\rho(t)),\ \rho(0)=\rho.
\end{align*}
where dual map $L_*$ is the dual map
\begin{equation}
L_*(\rho) = i[H,\rho] + \sum_{j} \big[V_j\rho V_j^* - \frac{1}{2}(V_j^*V_j\rho + \rho V_j^*V_j)\big].
\end{equation}
In the setting of quantum optics, the above equation models the evolution of atomic density matrix $\rho(0) = \rho$. An important quantity named \textit{second order correlation function} is define by
\begin{equation}
g^{(2)}(0) = \frac{\Tr(\Psi_*\Psi_*(\rho))}{\Tr(\Psi_*(\rho))^2} = \frac{\sum_{i,j} \langle V_j^*V_i^*V_iV_j \rangle_{\rho}}{(\sum_{j} \langle V_j^*V_j \rangle_{\rho})^2}
\end{equation}
The interpretation of $g^{(2)}(0)$ is the probability of finding two photoelectrons without a delay, compared to the squared probability of finding one photoelectron. If $g^{(2)}(0)>1$, a phenomenon called \textit{superradience} happens, see \cite{AM22}.
We now show a stability property for the quantity $g^{(2)}(0)$. We denote $g^{(2)}_L(0)$ to indicate the relation to the underlying Lindblad generator. We assume implicitly that the initial state is given by the same density $\rho$.
\begin{prop}
Suppose $L$ is a Lindblad generator with the associated jump map $\Psi_L$ and $\rho$ is a density operator. We assume that $g_L^{(2)}(0)>0$. Then for any $\varepsilon>0$, there exists a constant $\delta =\delta(\varepsilon,L,\rho)>0$, such that for any other Lindblad generator $L'$,
\begin{equation}
\|L-L'\|_{2\to 2} \le \delta \implies (1-\varepsilon)g_L^{(2)}(0) \le g_{L'}^{(2)}(0) \le (1+\varepsilon)g_L^{(2)}(0)
\end{equation}
\end{prop}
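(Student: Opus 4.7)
The strategy is to factor the map $L \mapsto g^{(2)}_L(0)$ through the jump map,
\[
L\ \longmapsto\ \Psi_L\ \longmapsto\ g^{(2)}_L(0)\ =\ \frac{\Tr(\Psi_L \circ \Psi_L(I_n)\rho)}{\Tr(\Psi_L(I_n)\rho)^2},
\]
and control each arrow separately near the given $L$. The point is that, having built up the dictionary between gradient forms and jump maps in Sections 3--5, the continuity of $\Psi_L$ in $L$ is already implicit in what we have proved, and the rest is just continuity of a rational function.

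For the first arrow, Proposition \ref{stability of gradient form} tells us that if $\|L-L'\|_{2\to 2}<\delta$, then in the cp order we have the sandwich
\[
(1-\varepsilon')\Gamma_L\ \le\ \Gamma_{L'}\ \le\ \Gamma_L+\varepsilon'\Gamma_{-(I-E_\tau)}.
\]
Corollary \ref{equivalent comparison} transfers this from gradient forms to jump maps, giving
\[
(1-\varepsilon')\Psi_L\ \le\ \Psi_{L'}\ \le\ \Psi_L+\varepsilon'\Psi_{-(I-E_\tau)}.
\]
Evaluating both sides at $I_n$ and pairing against $\rho$ yields a two-sided bound on $\Tr(\Psi_{L'}(I_n)\rho)$ in terms of $\Tr(\Psi_L(I_n)\rho)$ with an $O(\varepsilon')$-error whose constant $C_1:=\Tr(\Psi_{-(I-E_\tau)}(I_n)\rho)$ depends only on $n$ and $\rho$. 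For the quadratic quantity in the numerator I will telescope
\[
\Psi_{L'}\circ\Psi_{L'}-\Psi_L\circ\Psi_L\ =\ \Psi_{L'}\circ(\Psi_{L'}-\Psi_L)+(\Psi_{L'}-\Psi_L)\circ\Psi_L,
\]
and apply the sandwich once more on each factor: for $X\ge 0$ we have
\[
|\Tr((\Psi_{L'}-\Psi_L)(X)\rho)|\ \le\ \varepsilon'\max\!\bigl(\Tr(\Psi_L(X)\rho),\ \Tr(\Psi_{-(I-E_\tau)}(X)\rho)\bigr),
\]
which gives $|\Tr(\Psi_{L'}\circ\Psi_{L'}(I_n)\rho) - \Tr(\Psi_L\circ\Psi_L(I_n)\rho)|\le \varepsilon' C_2(L,n,\rho)$ after using the analogous pointwise bound on $\Psi_{L'}(Y)$ in terms of $\Psi_L(Y)$ plus an $\varepsilon'$-perturbation.

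For the second arrow, the hypothesis $g^{(2)}_L(0)>0$ forces the denominator $\Tr(\Psi_L(I_n)\rho)^2$ to be strictly positive (otherwise the ratio would not even be finite); consequently $\Psi\mapsto g^{(2)}(0)$ is Lipschitz in a neighborhood of $\Psi_L$. Combining with the quantitative bounds above, given $\varepsilon>0$ I will choose $\varepsilon'$ small enough that the numerator and denominator each change by at most a factor $1\pm\varepsilon/3$; then a routine quotient estimate produces $(1-\varepsilon)g^{(2)}_L(0)\le g^{(2)}_{L'}(0)\le(1+\varepsilon)g^{(2)}_L(0)$, with the required $\delta=\delta(\varepsilon,L,\rho)$ obtained by unwinding through Proposition \ref{stability of gradient form}.

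The only delicate step is the telescoping estimate for the numerator: since $\Psi_{L'}-\Psi_L$ is not itself cp, I cannot just bound it by its action on $I_n$, and must instead use the full two-sided sandwich to control $\Tr((\Psi_{L'}-\Psi_L)(X)\rho)$ for positive $X$. Beyond this, everything reduces to continuity of a rational function on a finite-dimensional space, and the entire argument stays within the comparison-of-gradient-forms framework developed earlier in the paper.
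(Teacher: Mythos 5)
Your proposal is correct and follows essentially the same route as the paper: obtain the two-sided cp-order sandwich $(1-\varepsilon')\Psi_L\le\Psi_{L'}\le\Psi_L+\varepsilon' E_\tau$ from Proposition \ref{stability of gradient form} together with Corollary \ref{equivalent comparison}, estimate the numerator and denominator of $g^{(2)}(0)$ separately, and conclude with a quotient bound using that $g^{(2)}_L(0)>0$ makes both strictly positive. Your telescoping of $\Psi_{L'}\circ\Psi_{L'}-\Psi_L\circ\Psi_L$ is only a cosmetic variant of the paper's direct expansion of $(\Psi_L+\varepsilon' E_\tau)\circ(\Psi_L+\varepsilon' E_\tau)$, both resting on monotonicity of the operator order under positive maps.
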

\begin{proof}
By Proposition \ref{stability of gradient form}, there exists a small constant $\delta'>0$, to be determined, such that 
\begin{align*}
(1-\delta')\Gamma_{L} \le \Gamma_{L'} \le \Gamma_{L} + \delta' \Gamma_{-(I-E_{\tau})}.
\end{align*}
Then apply Corollary \ref{equivalent comparison}, we know that 
\begin{align*}
(1-\delta')\Psi_L \le \Psi_{L'} \le \Psi_L + \delta'\Psi_e = \Psi_L + \delta'E_{\tau}
\end{align*}
where $\Psi_e$ is the jump map of $-(I-E_{\tau})$, which is actually $E_{\tau}$. 
Then we calculate the numerator and denominator of $g^{(2)}(0)$ separately. 
\begin{align*}
& \Tr((\Psi_{L'})_*(\Psi_{L'})_*(\rho)) = \Tr(\rho \Psi_{L'}(\Psi_{L'}(I_n))) \\
& \le \Tr(\rho \Psi_{L}(\Psi_{L}(I_n))) + \delta' \Tr(\rho \Psi_{L}(E_{\tau}(I_n))) + \delta' \Tr(\rho E_{\tau}(\Psi_{L}(I_n))) + \delta'^2\Tr(\rho E_{\tau}(E_{\tau}(I_n))) \\
& = \Tr(\rho \Psi_{L}(\Psi_{L}(I_n))) + \delta'(1+\frac{1}{n})\Tr(\rho \Psi_{L}(I_n)) + \delta'^2
\end{align*}
For the denominator, we have 
\begin{align*}
& \Tr((\Psi_{L'})_*(\rho))^2 = \Tr(\rho \Psi_{L'}(I_n))^2 \\
& \le \big(\Tr(\rho \Psi_{L}(I_n)) + \delta' \Tr(\rho E_{\tau}(I_n))\big)^2 \\
& = \Tr(\rho \Psi_{L}(I_n))^2+ 2\delta' \Tr(\rho \Psi_{L}(I_n)) + \delta'^2.
\end{align*}
On the other hand, it is obvious that 
\begin{align*}
\Tr((\Psi_{L'})_*(\Psi_{L'})_*(\rho)) \ge (1-\delta')^2 \Tr((\Psi_{L})_*(\Psi_{L})_*(\rho)),\ \Tr((\Psi_{L'})_*(\rho))^2 \ge (1-\delta')^2 \Tr((\Psi_{L})_*(\rho))^2.
\end{align*}
Therefore, 
\begin{align*}
\frac{\Tr((\Psi_{L})_*(\Psi_{L})_*(\rho))}{\Tr(\rho \Psi_{L}(I_n))^2+ 2\delta' \Tr(\rho \Psi_{L}(I_n)) + \delta'^2} \le g^{(2)}_{L'}(0) \le \frac{\Tr(\rho \Psi_{L}(\Psi_{L}(I_n))) + \delta'(1+\frac{1}{n})\Tr(\rho \Psi_{L}(I_n)) + \delta'^2}{(1-\delta')^2 \Tr((\Psi_{L})_*(\rho))^2}.
\end{align*}
If we choose $\delta'>0$ such that 
\begin{align*}
2\delta' \Tr(\rho \Psi_{L}(I_n)) + \delta'^2 \le c\varepsilon \min\{\Tr(\rho \Psi_{L}(I_n))^2,\ \Tr(\rho \Psi_{L}(\Psi_{L}(I_n)))\}, c>0,
\end{align*}
we have \begin{align*}
\frac{1}{1+c\varepsilon} g^{(2)}(0) \le  g^{(2)}_{L'}(0) \le \frac{1+c\varepsilon}{(1-\delta')^2} g^{(2)}_{L}(0).
\end{align*}
Then choosing $c>0$ small enough, we arrive at our result.
\end{proof}

\bibliographystyle{plain} 
\bibliography{stabilityref} 
\end{document}